\documentclass[12pt,A4]{article}
\usepackage[french,english]{babel}
\usepackage[T1]{fontenc}
\usepackage[applemac]{inputenc}
\usepackage{amssymb,url,xspace,amsmath,amsthm,eucal,yhmath}
\usepackage{mathrsfs,eufrak,array,epsfig,psfrag,graphicx,dsfont,bbm,stmaryrd,xcolor}
\usepackage[all]{xy}
\usepackage[pdfpagemode=UseNone,bookmarksopen=false,colorlinks=true,urlcolor=blue,citecolor=blue,citebordercolor=blue,linkcolor=blue]{hyperref}\usepackage{graphicx,pgf,tikz}
\usepackage{verbatim}
\usepackage {cases}
\usepackage[font=sf, labelfont={sf,bf}, margin=1cm]{caption}

\textwidth 173mm \textheight 235mm \topmargin -50pt \oddsidemargin
-0.45cm \evensidemargin -0.45cm

\newtheorem{thmintro}{Theorem}
\newtheorem{corintro}{Corollary}

\newtheorem{thm}{Theorem}[section]
\newtheorem{lem}[thm]{Lemma}
\newtheorem{prop}[thm]{Proposition}
\newtheorem{cor}[thm]{Corollary}

\theoremstyle{definition}

\newtheorem{defn}[thm]{Definition}

\newtheorem{rem}[thm]{Remark}

\renewcommand\Pr[1]{\mathbb{P}\left(#1\right)}

\newcommand\Prmu[1]{\mathbb{P}_{\mu}\left(#1\right)}
\newcommand\Prmuj[1]{\mathbb{P}_{\mu,j}\left(#1\right)}

\newcommand\Es[1]{\mathbb{E}\left[#1\right]}
\newcommand\Esmu[1]{\mathbb{E}_{\mu}\left[#1\right]}

\newcommand \fl[1] {\left\lfloor #1 \right\rfloor}

\def \L {\mathbb{L}}

\def \P {\mathbb{P}}

\def \Pmu {\mathbb{P}_\mu}

\def \Pmuj {\mathbb{P}_{\mu,j}}
\def \Prhoj {\mathbb{P}_{\rho,j}}

\def \oW {\overline{W}}

\def \N {\mathbb N}
\def \E {\mathbb E}
\def \D {\mathbb D}

\def \R {\mathbb R}

\def \D {\mathbb D}
\def \T {\mathbb T}
\def \Z {\mathbb Z}

\def \W {\mathcal{W}}

\def \Znz {\Z/n\Z}

\def \Vc {\mathcal{V}}
\def \V {\textbf{V}}

\def \bx {\textnormal{\textbf{x}}}
\def \bf {\textbf{f}}

\def \l {\lambda}

\def \H { \mathcal{H}}

\def \d {\displaystyle}

\def \e {\epsilon}

\def \GW {\textnormal{GW}}
\def \z {\zeta}

\def \t {\mathfrak{t}}

\def \cZ { \mathcal{Z}}

\def \zt {|\tau|}
\def \lt {\lambda(\tau)}

\def \oW {\overline{W}}
\def \oX {\overline{X}}

\def \Z {\mathbb {Z}}

\long\def\symbolfootnote[#1]#2{\begingroup%
\def\thefootnote{\fnsymbol{footnote}}\footnote[#1]{#2}\endgroup}

\newcommand{\keywords}[1]{ \noindent {\footnotesize
             {\small \em Keywords.} {\sc #1}}.}

\newcommand{\ams}[2]{  \noindent {\footnotesize
             {\small \em AMS {\rm 2000} subject classifications.
             {\rm Primary {\sc #1}; secondary {\sc #2}}}.} }

\begin{document}

\vskip1cm

\centerline{\LARGE Limit theorems for conditioned non-generic Galton--Watson trees}

\vspace{8mm}

\centerline{\large Igor Kortchemski
\symbolfootnote[1]{Université Paris-Sud, Orsay, France. \texttt{igor.kortchemski@normalesup.org}} }

\vspace{4mm}

\centerline{\large March 2013}

 \vspace{3mm}
\vspace{1cm}

\begin{abstract}
We study a particular type of subcritical Galton--Watson trees, which are called non-generic trees in the physics community.  In contrast with the critical or supercritical case, it is known that condensation appears in certain large conditioned non-generic trees, meaning that with high probability there exists a unique vertex with macroscopic degree comparable to the total size of the tree. Using recent results concerning subexponential distributions, we investigate this phenomenon by studying scaling limits of such trees and show that the situation is completely different from the critical case. In particular, the height of such trees grows logarithmically in their size. We also study fluctuations around the condensation vertex.

\bigskip

\keywords{Condensation, Subcritical Galton--Watson trees, Scaling limits, Subexponential distributions}

 \bigskip

\ams{60J80,60F17}{05C80,05C05}
\end{abstract}

\vfill

\begin{figure}[h!]
 \begin{minipage}[c]{9cm}
   \centering
      \includegraphics[scale=0.4]{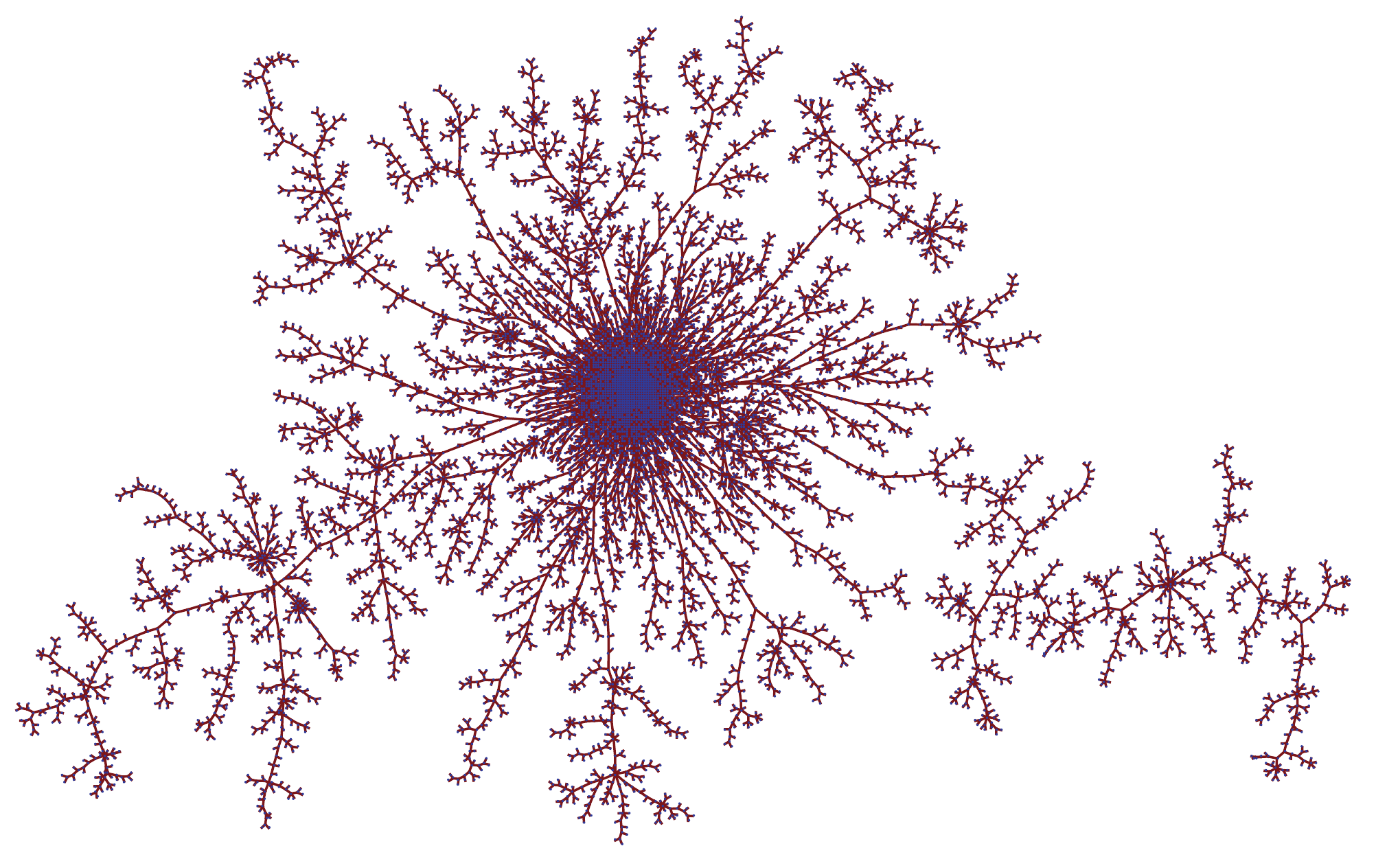}
   \end{minipage}
   \begin{minipage}[c]{9cm}
   \centering
      \includegraphics[scale=0.3]{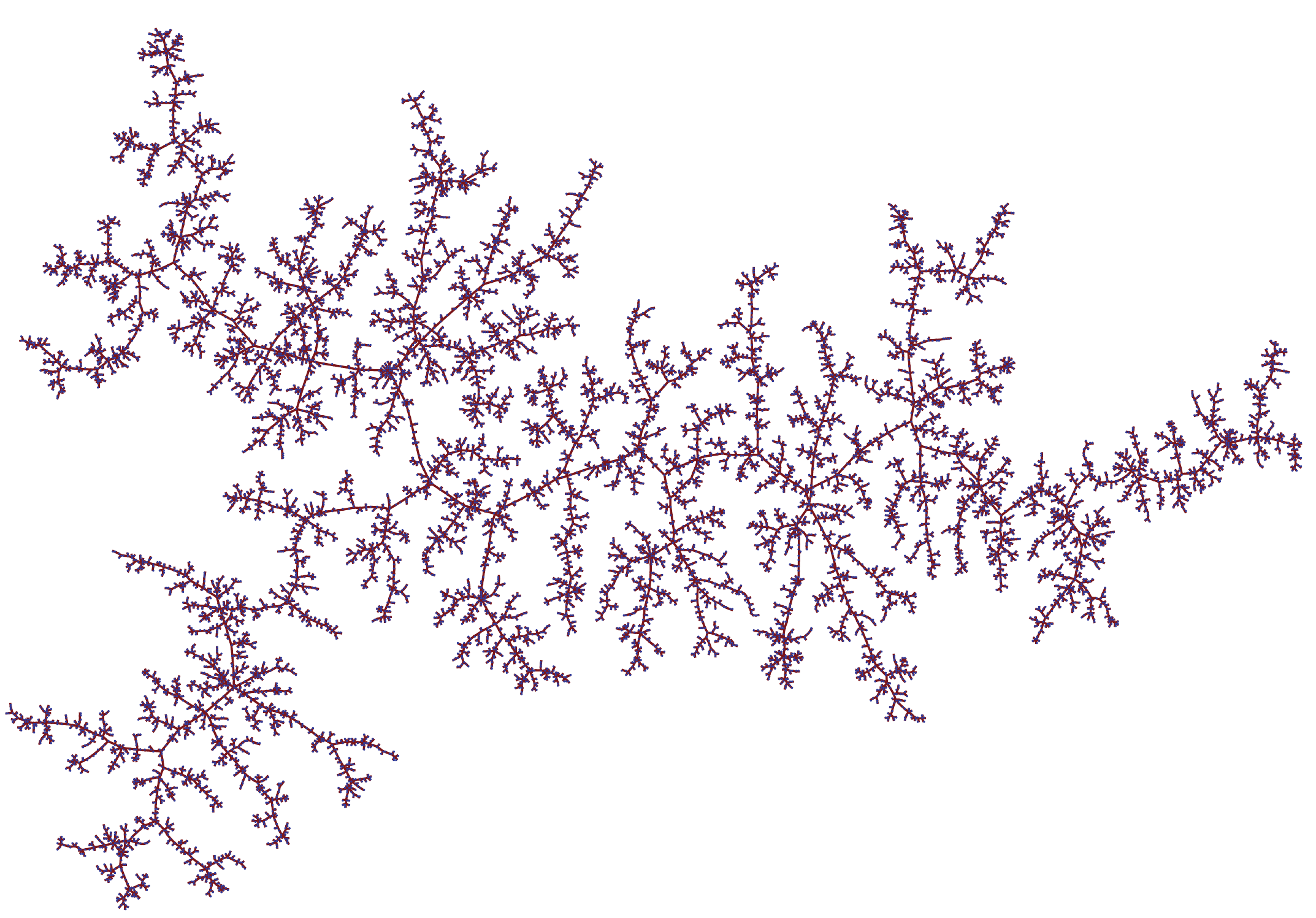}
   \end{minipage}
   \caption{\label {fig:condensation}The first figure shows a large non-generic Galton--Watson tree. The second figure shows a large critical Galton--Watson tree with finite variance.}
\end {figure}
\vfill

\pagebreak

\section*{Introduction}

The behavior of large Galton--Watson trees whose offspring distribution $ \mu= ( \mu_i) _ {i \geq 0}$ is \emph{critical} (meaning that the mean of $ \mu$ is $1$) and has \emph {finite variance} has drawn a lot of attention. If $ \t_n$ is a Galton--Watson tree with offspring distribution $ \mu$ (in short a $\GW_ \mu$ tree) conditioned on having total size $n$, Kesten \cite{Kes86} proved that $ \t_n$ converges locally in distribution as $n \to \infty$ to the so-called critical Galton--Watson tree conditioned to survive. Aldous \cite {Ald93} studied the scaled asymptotic behavior of $ \t_n$ by showing that the appropriately rescaled contour function of $ \t_n$ converges to the Brownian excursion.

These results have been extended in different directions. The ``finite second moment'' condition on $ \mu$ has been relaxed by Duquesne \cite {Du03}, who showed that when $ \mu$ belongs to the domain of attraction of a stable law of index $ \theta \in (1,2]$, the appropriately rescaled contour function of $ \t_n$ converges toward the normalized excursion of the $ \theta$-stable height process, which codes the so-called $  \theta$-stable tree (see also \cite{K11}). In a different direction, several authors have considered trees conditioned by other quantities than the total size, for example by the height \cite{KP96,LG10} or the number of leaves \cite {Riz11,Kor12}.

\paragraph {Non critical Galton--Watson trees.}  Kennedy \cite {Ken75} noticed that, under certain conditions, the study of non-critical offspring distributions reduces to the study of critical ones. More precisely, if $ \lambda>0$ is a fixed parameter such that $Z_ \lambda= \sum_ {i \geq 0}  \mu_i \lambda ^ i < \infty$, set ${\mu}^ {( \lambda)}_i= \mu_i \lambda ^ i /Z_ \lambda$ for $i \geq 0$. Then a $ \GW_ \mu$ tree conditioned on having total size $n$ has the same distribution as a $\GW_ {  {\mu}^ {( \lambda)}}$ tree conditioned on having total size $n$. Thus, if one can find $ \lambda>0$ such that both $Z_ \lambda< \infty$ and $  { \mu}^ {( \lambda)}$ is critical, then studying a conditioned non-critical Galton--Watson tree boils down to studying a critical one. This explains why the critical case has been extensively studied in the literature.

Let $\mu$ be a probability distribution such that $ \mu_0>0$ and $ \mu_k>0$ for some $k \geq 2$. We are interested in the case where there exist \emph{no} $ \lambda>0$ such that both $Z_ \lambda< \infty$ and $  { \mu}^ {( \lambda)}$ is critical (see \cite[Section 8]{Jan12} for a characterization of such probability distributions). An important example is when $ \mu$ is subcritical (i.e. of mean strictly less than $1$) and $ \mu_i \sim c/i ^ \beta$ as $i \to \infty$ for a fixed parameter $ \beta>2$. The study of such $\GW_ \mu$ trees conditioned on having a large fixed size was initiated only recently by Jonsson \& Stef\'ansson \cite{JS11} who called such trees \emph{non-generic} trees. They studied the above-mentioned case where  $ \mu_i \sim c/i ^ \beta$ as $i \to \infty$, with $ \beta>2$, and showed that if $ \t_n$ is a $ \GW_ \mu$ tree conditioned on having total size $n$, then with probability tending to $1$ as $ n \rightarrow \infty$, there exists a unique vertex of $ \t_n$ with maximal degree, which is asymptotic to $(1- \mathfrak {m})n$ where $\mathfrak {m}<1$ is the mean of $ \mu$.  This phenomenon is called  \emph {condensation} and appears in a variety of statistical mechanics models such as the Bose-Einstein condensation for bosons, the zero-range process \cite {JMP00,GSS03} or the Backgammon model \cite {BBJ97} (see Fig.~\ref{fig:condensation}).

Jonsson and Stef\'ansson \cite{JS11} have also constructed an infinite random tree $ \widehat{\mathcal {T}}$  (with a unique vertex of infinite degree) such that $ \t_n$ converges locally in distribution toward  $ \widehat{\mathcal {T}}$  (meaning roughly that the degree of every vertex of $ \t_n$ converges toward the degree of the corresponding vertex of $ \widehat{\mathcal {T}}$).  See Section \ref{section:location} below for the description of $ \widehat{\mathcal {T}}$. In \cite{Jan12}, Janson has extended this result to simply generated trees and has in particular given a very precise description of local properties of Galton--Watson trees conditioned on their size.

\bigskip

In this work, we are interested in the existence of scaling limits for the random trees $ \t_n$. When scaling limits exist, one often gets information concerning the global structure of the tree.

\paragraph {Notation and assumptions.} Throughout this work $\theta > 1$ will be a fixed parameter. We say that a probability distribution $(\mu_j)_{j \geq 0}$ on the nonnegative integers satisfies Assumption $(H_ \theta)$ if the following two conditions hold:
\begin{enumerate}
\item[(i)] $\mu$ is subcritical, meaning that $ \displaystyle 0<\sum_{j=0}^\infty j \mu_j < 1$.
\item[(ii)] There exists a measurable function $\mathcal{L}: \R_+ \rightarrow \R_+$ such that $ \mathcal {L}(x)>0$ for $x$ large enough and $\lim_{x
\rightarrow \infty} \mathcal{L}(tx)/\mathcal{L}(x)=1$ for all $t>0$ (such a function is called slowly varying) and $ \mu_n= \mathcal{L}(n)/ n ^ { 1+\theta}$ for every $ n \geq 1$.
\end{enumerate}
Let $ \zt$ be the total progeny or size of a tree $  \tau$. Condition (ii) implies that  $ \Prmu { \zt=n}>0$ for sufficiently large $n$. Note that (ii) is more general than the analogous assumption in \cite{JS11,Jan12}, where only the case $ \mathcal {L}(x) \rightarrow c$ as $x \rightarrow \infty$ was studied in detail. Throughout this text, $ \theta>1$ is a fixed parameter and $ \mu$ is a probability distribution on $ \Z_+$ satisfying the Assumption $ (H_ \theta)$. In addition, for every $n \geq 1$ such that $ \Prmu { \zt=n}>0$, $ \t_n$ is a $ \GW_ \mu$  tree conditioned on having $n$ vertices (note that $ \t_n$ is well defined for $n$ sufficiently large). The mean of $ \mu$ will be denoted by $\mathfrak {m}$ and we set $ \gamma = 1- \mathfrak {m}$.

\bigskip

We are now ready to state our main results which concern different aspects of non-generic trees. We are first interested in the condensation phenomenon and derive properties of the maximal degree. We then find the location of the vertex of maximal degree. Finally we investigate the global behavior of non-generic trees by studying their height.

\paragraph{Condensation.} If $ \tau$ is a (finite) tree, we denote  by $\Delta ( \tau)$ the maximal out-degree of a vertex of $ \tau$ (the out-degree of a vertex is by definition its number of children). If $\tau$ is a finite tree, let $u _ \star( \tau)$ be the smallest vertex (in the lexicographical order, see Definition \ref{def:fonctions} below) of $ \tau$ with maximal out-degree. The following result states that, with probability tending to $1$ as $n \rightarrow \infty$, there exists a vertex of $ \t_n$ with out-degree roughly $ \gamma n$  and that  the deviations around this typical value are of order roughly $O(n^ { 1/(2 \wedge \theta)})$, and also that the out-degrees of all the other vertices of $ \t_n$ are  of order roughly $O(n^ { 1/(2 \wedge \theta)})$. In particular the vertex with maximal out-degree is unique with probability tending to $1$ as $n \rightarrow \infty$.

\begin{thmintro}
\label{thmintro:1} There exists a slowly varying function $L$ such that if $B_n={L(n)n^ { 1/(2 \wedge \theta)}}$, the following assertions hold:
\begin{enumerate}
 \item[(i)] We have $ \d \frac{ \Delta( \t_n)}{ \gamma n}  \quad\mathop{\longrightarrow}^ {( \P)}_{n \rightarrow \infty} \quad 1$.
 \item[(ii)] Let $D_{n}$ be the maximal out-degree of vertices of $ \t_{n}$ except $u_{ \star}(\t_{n})$. If $ \theta \geq 2$, then $D_{n}/B_{n}$ converges in probability to $0$ as $ n \rightarrow \infty$. If $ \theta \in (1,2)$, then
 $$  \Pr {\frac{ D_{n}}{ B_n}\leq u}   \quad\mathop{\longrightarrow}_{n \rightarrow \infty} \quad  \exp \left( \frac{ 1}{\Gamma (1- \theta)} u^ {- \theta }  \right) \qquad u \geq 0,$$
where $ \Gamma$ is Euler's Gamma function.
 \item[(iii)] Let $(Y_t)_ {t \geq 0}$ be a spectrally positive Lévy process with Laplace exponent $ \E[ \exp(- \lambda Y_t)]= \exp(t \lambda ^ {2 \wedge \theta})$. Then $$ \frac{  \d \Delta( \t_n) - \gamma n}{B_n }  \quad\mathop{\longrightarrow} ^ {(d)}_{n \rightarrow \infty} \quad -Y_1.$$
  \end{enumerate}
\end{thmintro}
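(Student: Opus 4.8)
}
The plan is to encode $\t_n$ by its Lukasiewicz path and to reduce the whole statement to a one--big--jump analysis of a subcritical random walk. Recall that if $(W_i)_{0\le i\le n}$ is the Lukasiewicz path of $\t_n$, the out--degree of the $i$-th vertex of $\t_n$ in lexicographic order equals $W_i-W_{i-1}+1$, so that $\Delta(\t_n)=1+\max_{1\le i\le n}(W_i-W_{i-1})$ and the maximal out--degree of $\t_n$ away from $u_\star(\t_n)$ equals $1$ plus the \emph{second} largest of the $n$ increments. Let $(X_i)_{i\ge1}$ be i.i.d.\ with law $\nu(k)=\mu_{k+1}$ for $k\ge-1$ and put $S_j=X_1+\cdots+X_j$; then $\E[X_1]=\mathfrak m-1=-\gamma<0$ and, by $(H_\theta)$, $\P(X_1\ge k)\sim\mathcal L(k)/(\theta k^\theta)$ as $k\to\infty$, so that $\nu$ is subexponential and aperiodic. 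It is classical (Lukasiewicz encoding together with the cycle lemma) that the multiset of out--degrees of the vertices of $\t_n$ has the same law as $\{X_1+1,\dots,X_n+1\}$ under the bridge measure $\P(\,\cdot\mid S_n=-1)$. Since every quantity in Theorem~\ref{thmintro:1} is a function of this multiset, it suffices to work under $\P(\,\cdot\mid S_n=-1)$, the positivity constraint of the excursion playing no role.

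Subexponentiality then drives everything. Under $\P(\,\cdot\mid S_n=-1)$ the walk is forced to sit at $-1$ at time $n$, that is, roughly $\gamma n$ above its typical position $\E[S_n]=-\gamma n$; by the one--big--jump principle this deviation is realised, with probability tending to $1$, through a single ``condensation'' increment $X_{i^\star}$ of size $\gamma n+O(B_n)$ at a uniformly distributed location $i^\star$, while the remaining $n-1$ increments $(X_i)_{i\ne i^\star}$ form an asymptotically free $\nu$-sequence. Making this quantitative is where the recent local limit theorems for heavy--tailed walks come in: one shows that $\P(S_n=-1)$ is of order $n\,\P(X_1=\lceil\gamma n\rceil)$, with the dominant contribution coming from exactly one big increment; one rules out two increments of size $\gg B_n$; and --- the crucial point --- one observes that once $X_{i^\star}$ is removed the leftover bridge of length $n-1$ ends at $-1-X_{i^\star}=-\gamma n+O(B_n)$, which lies in the \emph{typical} window, so that a stable local central limit theorem for $\nu$ (valid since $\nu$ is attracted to a spectrally positive $(2\wedge\theta)$-stable law, the Gaussian when $\theta\ge2$) allows one to compare it with the free walk $S$ of length $n-1$. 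One then \emph{defines} the slowly varying function $L$, and the common scale $B_n=L(n)n^{1/(2\wedge\theta)}$, by requiring that $B_n^{-1}(S_n-\E[S_n])\to Y_1$ in distribution, $(Y_t)$ being the L\'evy process of the statement; $Y$ is spectrally positive because the increments $X_i+\gamma\ge\gamma-1$ are bounded below, and when $\theta=2$ one takes $L(x)^2=\E[X_1^2\mathbf 1_{\{X_1\le x\}}]$, which is slowly varying.

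Given this picture the three assertions are short. For (iii), writing $X_{i^\star}=-1-\sum_{i\ne i^\star}X_i$ gives
$$\Delta(\t_n)-\gamma n\;=\;X_{i^\star}+1-\gamma n\;=\;-\left(\sum_{i\ne i^\star}X_i+\gamma(n-1)\right)+O(1),$$
and, $(X_i)_{i\ne i^\star}$ being an asymptotically free $\nu$-sequence, the bracket divided by $B_n$ converges to $Y_1$, whence $B_n^{-1}(\Delta(\t_n)-\gamma n)\to-Y_1$. In particular $X_{i^\star}/(\gamma n)\to1$ in probability, and since the second largest increment is $O(B_n)=o(n)$ one has $\Delta(\t_n)=1+X_{i^\star}$ with probability tending to $1$, which gives (i). For (ii), $D_n-1$ is the largest increment of the leftover walk: when $\theta\ge2$ this has order $(n\mathcal L(n))^{1/\theta}=o(B_n)$ by the choice of $L$, so $D_n/B_n\to0$; when $\theta\in(1,2)$,
$$\P\left(\frac{D_n}{B_n}\le u\right)=\left(1-\P(X_1\ge uB_n)\right)^{n-1}+o(1)\;\longrightarrow\;\exp\left(-\lim_{n}n\,\P(X_1\ge uB_n)\right)=\exp\left(\frac{u^{-\theta}}{\Gamma(1-\theta)}\right),$$
the last equality because $B_n$ is normalised so that $n\,\P(X_1\ge B_n)\to-1/\Gamma(1-\theta)$, which is precisely the intensity of the jumps of size $\ge1$ of the process $Y$ (note $\Gamma(1-\theta)<0$ for $\theta\in(1,2)$, so this quantity is positive).

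The delicate part, which I expect to require most of the work, is the one--big--jump analysis of the \emph{conditioned} walk: one must establish a local limit theorem for $\{S_n=-1\}$ precise enough to pin the condensation increment at scale $\gamma n$ with fluctuations of the exact order $B_n$, to exclude a second macroscopic or mesoscopic increment, and to justify replacing the length--$(n-1)$ bridge conditioned on a typical endpoint by the free walk --- this last point resting on uniform Gnedenko--type local estimates for $\nu^{*(n-1)}$ near its mode. The boundary case $\theta=2$ needs additional care, the limit process being Brownian while $\nu$ may have infinite variance, so that the correct slowly varying truncated--second--moment normalisation must be tracked throughout; by contrast, passing from the excursion to the bridge costs nothing, all functionals at hand depending only on the increment multiset.
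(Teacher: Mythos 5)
Your reduction is the same as the paper's: pass from the tree to the increments of its Lukasiewicz path, observe that $\Delta(\t_n)-1$ and $D_n-1$ are respectively the largest and second largest increments, and that these are invariant under the cyclic shift (Vervaat/cycle lemma) turning the excursion into the bridge $\{W_n=-1\}$; then extract everything from a one--big--jump description of the conditioned walk, including the identity $X_{i^\star}+1-\gamma n=-\bigl(\sum_{i\neq i^\star}X_i+\gamma(n-1)\bigr)+O(1)$ which yields (iii). The computations you give for (i), (ii) and (iii) on top of that description are correct (your direct binomial computation for (ii) when $\theta<2$ replaces the paper's route via functional convergence of the leftover walk to $Y$ and the continuity of the largest-jump functional, but the two are equivalent since the total-variation form of the asymptotic independence handles arbitrary Borel functionals of the $n-1$ leftover increments).

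The one substantive issue is that the step you defer --- pinning the condensation increment at scale $\gamma n$ with fluctuations of order exactly $B_n$, excluding a second large increment, and replacing the leftover length-$(n-1)$ bridge by the free walk --- is not a technical afterthought but the entire content of the theorem, and your proposal only sketches how one would prove it. The paper does not prove it either: it imports it wholesale as the Armend\'ariz--Loulakis theorem (total-variation convergence, after removing the maximal component, of the remaining $n-1$ increments to an i.i.d.\ $\mu$-sample, uniformly over conditioning levels $x\geq\epsilon n$), whose hypothesis --- the $(0,1]$-subexponential local estimate $\Pr{\oW_n\in(x,x+1]}\sim n\Pr{\oW_1\in(x,x+1]}$ uniformly for $x\geq\epsilon n$ --- is itself quoted from Denisov--Dieker--Shneer. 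So your plan is viable and is essentially a re-derivation of those two results; to make the proof complete you should either cite them or carry out the local-limit analysis you describe (which, as you anticipate, is where all the work lies, especially the uniform Gnedenko-type estimates in the boundary case $\theta=2$ with infinite variance). A further small point: asymptotic independence of the leftover increments must be obtained in a form strong enough to transfer the stable central limit theorem for their sum (total variation on $\R^{n-1}$, as in Armend\'ariz--Loulakis, suffices; convergence of finitely many marginals would not).
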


When $ \mu$ has finite variance $ \sigma^2 \in (0, \infty)$, one may take $B_n= \sigma \sqrt {n/2}$. Theorem \ref{thmintro:1} has already been proved when $ \mu_n \sim c/n^ { 1+ \theta}$ as $ n \rightarrow \infty$ (that is when $ \mathcal {L}=c+o(1)$, in which case one may choose $L$ to be a constant function) by Jonsson \& Stef\'ansson \cite{JS11} for (i) and Janson \cite{Jan12} for (iii). However, our techniques are different and are based on a coding of $ \t_n$ by a conditioned random walk combined with recent results of Armendáriz \& Loulakis \cite{AL11} concerning random walks whose jump distribution is subexponential, which imply that, informally, the tree $ \t_{n}$ looks like a finite spine of geometric length decorated with independent $ \GW_{ \mu}$ trees, and on top of which are grafted $ \Delta( \t_n)$ independent $ \GW_ \mu$ trees (see Proposition \ref{prop:tilde} and Corollary \ref{cor:indep} below for precise statements). The main advantage of this approach is that it enables us to obtain new results concerning the structure of $ \t_{n}$.

\paragraph{Localization of the vertex of maximal degree.} We are also interested in the location of the vertex of maximal degree $u_{ \star}( \t_{n})$.  Before stating our results, we need to introduce some notation. If $ \tau$ is a tree, let $U( \tau)$ be the index in the lexicographical order of the first vertex of $ \t_n$ with maximal out-degree (when the vertices of $ \tau$ are ordered starting from index $0$). Note that the number of children of  $u _ \star( \tau)$ is $ \Delta( \tau)$. Denote the generation of $u _ \star( \tau)$ by $|u _ \star( \tau)|$. 
\begin{thmintro} \label {thm:cvd}The following three convergences hold:
\begin{enumerate}
\item[(i)] For $i \geq 0$, $ \d \Pr {U( \t_n)=i}  \quad\mathop{\longrightarrow}_{n \rightarrow \infty} \quad \gamma \cdot \Prmu{ \zt \geq i+1}$.
 \item[(ii)] As $n \rightarrow \infty$, $|u _ \star( \t_n)|$ converges in distribution toward a geometric random variable of parameter $1- \mathfrak{m}$, i.e.
 $$\d\Pr {|u _ \star( \t_n)|=i}  \quad\mathop{\longrightarrow}_{n \rightarrow \infty} \quad (1-\mathfrak {m}) \mathfrak {m} ^ {i}, \qquad i \geq 0.$$
  \end{enumerate}
\end{thmintro}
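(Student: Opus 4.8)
The plan is to translate both quantities into the language of the Łukasiewicz path and then feed in the structural description of $\t_n$. Recall that $\t_n$ is coded by a random walk $W^{(n)}=(W^{(n)}_0,\ldots,W^{(n)}_n)$ with independent increments of law $k\mapsto\mu_{k+1}$, $k\geq-1$, started at $0$ and conditioned to hit $-1$ for the first time at time $n$; writing $u_0,u_1,\ldots$ for the vertices in lexicographical order, the increment $W^{(n)}_{k+1}-W^{(n)}_k$ is the out-degree of $u_k$ minus one. Hence $U(\t_n)$ is the first time at which $W^{(n)}$ makes a jump of maximal size, $u_\star(\t_n)=u_{U(\t_n)}$ is the vertex explored at that time, and, by the standard expression for the generation of a vertex in terms of its Łukasiewicz path,
\[
|u_\star(\t_n)|=\#\bigl\{\,0\leq i<U(\t_n):\ W^{(n)}_i=\min\nolimits_{\,i\leq k\leq U(\t_n)}W^{(n)}_k\,\bigr\}.
\]
I will also use two elementary facts about an unconditioned $\GW_\mu$ walk $W$: since $\zt$ is the first hitting time of $-1$ by $W$, one has $\Prmu{\zt\geq i+1}=\Pr{W_1\geq0,\ldots,W_i\geq0}$; and applying Wald's identity to $W_{\zt}=-1$ gives $\Esmu{\zt}=1/\gamma$. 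In particular $\gamma\,\Prmu{\zt\geq i+1}=\Pr{W_1\geq0,\ldots,W_i\geq0}/\Esmu{\zt}$ is the mass function at $i$ of the \emph{age} of a stationary renewal process whose inter-arrival law is that of $\zt$.

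Now for the substantive input: by Proposition \ref{prop:tilde} and Corollary \ref{cor:indep}, with probability tending to $1$ the tree $\t_n$ decomposes into a random finite ``decorated spine'' $\widetilde\tau_n$, converging in law as $n\to\infty$, on top of which one grafts $\Delta(\t_n)$ independent $\GW_\mu$ trees at $u_\star(\t_n)$. Both $U(\t_n)$ — the number of vertices preceding $u_\star(\t_n)$, all of which belong to $\widetilde\tau_n$ (the grafted trees are descendants of $u_\star(\t_n)$, hence come later in lexicographical order) — and $|u_\star(\t_n)|$ — the length of the spine of $\widetilde\tau_n$ — are functionals of $\widetilde\tau_n$ alone, so they converge in distribution and it remains to identify the limits. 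Equivalently, at the level of the walk, the one-big-jump asymptotics of Armend\'ariz \& Loulakis, combined with the cycle-lemma (Vervaat) relation between $W^{(n)}$ and the associated random walk bridge, show that the initial segment $(W^{(n)}_0,\ldots,W^{(n)}_{U(\t_n)})$ is asymptotically the unconditioned walk $W$ read from its last descending-ladder epoch before a large deterministic time; and since the increments of $W$ are $\geq-1$, its successive first-passage times to $-1,-2,\ldots$ form a renewal process with inter-arrival law that of $\zt$.

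For (i): by the previous paragraph $U(\t_n)$ converges in distribution to the age $A$ of that renewal process, and by the renewal theorem $\Pr{A=i}=\Pr{\zt>i}/\Esmu{\zt}=\gamma\,\Prmu{\zt\geq i+1}$, which is the announced limit. For (ii): the limiting decorated spine is, by construction, the part of the Jonsson--Stef\'ansson local limit $\widehat{\mathcal T}$ below its unique infinite-degree vertex, whose spine is continued independently at each level with probability $\mathfrak m$ (as recalled in Section \ref{section:location}), so $|u_\star(\t_n)|$ converges in distribution to a geometric variable of parameter $1-\mathfrak m$. This can also be obtained directly: by the displayed formula and time-reversal, the limit of $|u_\star(\t_n)|$ is the number of weak ascending ladder epochs before time $A$ of (the reversal of) the limiting initial walk, and a short generating-function computation based on the identity $\Es{s^{\zt}}=s\,g_\mu(\Es{s^{\zt}})$ — with $g_\mu$ the generating function of $\mu$ — both yields the geometric law in (ii) and re-derives the law $\gamma\,\Prmu{\zt\geq i+1}$ of (i). The only genuinely delicate step is the one isolated beforehand as Proposition \ref{prop:tilde} and Corollary \ref{cor:indep}, i.e.\ controlling the effect of the conditioning $\{\zt=n\}$ on the bottom of the tree (equivalently the initial segment of the Łukasiewicz path); granting it, what remains is the combinatorial dictionary between tree and walk together with the renewal/generating-function identification of the limit laws.
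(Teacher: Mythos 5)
Your proof is correct and its core is the same as the paper's: both rest on the Armend\'ariz--Loulakis theorem via Proposition \ref{prop:tilde}, the cyclic shift of the Lukasiewicz path starting just after the maximal jump (Proposition \ref{prop:links}), and the resulting identification of $U(\t_n)$ with a last-passage functional of an \emph{unconditioned} negative-drift walk. The differences are worth recording. For (i), where the paper reverses time and evaluates $\Pr{\text{last supremum time}=i}$ as the product $\Pr {\forall m \leq i , W_m \geq 0} \cdot \Pr { \forall j \geq 1, W_{j} \leq -1}$ (Proposition \ref{prop:useful}), you observe that $n-1-I(\t_n)$ is the age at time $n-1$ of the renewal process of first-passage times to $-1,-2,\ldots$ and invoke the discrete renewal theorem; since $\Prmu{\zt=1}=\mu_0>0$ guarantees aperiodicity and $\Esmu{\zt}=1/\gamma<\infty$, this is a legitimate and equivalent route to the same limit law. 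For (ii), your first route is the paper's (local convergence to $\widehat{\mathcal T}$), but note that the paper is explicit that local convergence alone does \emph{not} suffice: one needs the tightness of $U(\t_n)$ supplied by (i) to conclude that $u_\star(\t_n)$ coincides with the top of the spine of $\widehat{\mathcal T}$ with high probability, and your phrase ``the limiting decorated spine is, by construction, the part of $\widehat{\mathcal T}$ below its infinite-degree vertex'' quietly assumes exactly this identification. Your second route --- expressing $|u_\star(\t_n)|$ via \eqref{eq:relHW} as a count of weak ascending ladder epochs of the reversed walk up to the last supremum time, whose limit is geometric of parameter $\gamma$ --- is sound and is in fact the computation the paper carries out later for $M_T$ in Lemma \ref{lem:fd} when proving Theorem \ref{thm:cvfinidim}; its advantage is that it makes (ii) independent of the external local-limit theorem of Jonsson--Stef\'ansson and Janson. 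The only looseness is the assertion that $U(\t_n)$ and $|u_\star(\t_n)|$ are ``functionals of a decorated spine converging in law'': Proposition \ref{prop:tilde} gives total-variation convergence of the increments of the shifted walk, not of a spine object, so the convergence of these functionals should be (and, in your walk-level paragraphs, effectively is) derived from the walk statement rather than asserted from the decomposition.
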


\noindent See Section \ref{section:location} below for the description of $ \widehat{\mathcal {T}}$. Using  different methods, a  result similar to assertion (i) (as well as Propositions \ref{prop:invGW} and \ref{prop:luka} below) has been proved by Durrett \cite[Theorem 3.2]{D80} in the context of random walks when $ \t_n$ is a $ \GW_ \mu$ tree conditioned on having \emph{at least} $n$ vertices and in addition $\mu$ has finite variance. However, the so-called local conditioning by having a fixed number of vertices is often much more difficult to analyze (see e.g. \cite{Du03,LG10}). Note that $ \sum_ {i \geq 1} \Prmu { \zt \geq i}= \Esmu { \zt}= 1/ \gamma$, so that the limit in (i) is a probability distribution. The proof of (i) combines the coding of $ \t_n$ by a conditioned random walk with the previously mentioned results of Armendáriz \& Loulakis. The proof of the second assertion uses (i) together with the local convergence of $ \t_n$ toward the infinite random tree $\widehat{\mathcal {T}}$, which has been obtained by Jonsson \& Stef\'ansson \cite{JS11} in a particular case and then generalized by Janson \cite {Jan12}, and was already mentioned above.

\bigskip

We are also interested in the sizes of the subtrees grafted on $u_ \star ( \t_n)$. If $ \tau$ is a tree, for $1 \leq j \leq \Delta ( \tau)$, let $ \xi_{j}(\tau)$ be the number of descendants of the $j$-th child of $u_ \star ( \tau)$ and set $Z_j ( \tau)=\xi_{1}(\tau)+\xi_{2}(\tau) + \cdots + \xi_{j}(\tau)$. If $I$ is an interval, we let $\D(I , \R)$ denote the space of all right-continuous with left limits (càdlàg) functions $I \to \R$,
endowed with the Skorokhod $J_1$-topology (see \cite[chap. 3]{Bil99} and \cite[chap. VI]{JS03} for background concerning the Skorokhod topology).  If $ x \in \R$, let $\fl {x}$ denote the greatest integer smaller than or equal to $x$. Recall that $(Y_t)_ {t \geq 0}$ is the spectrally positive Lévy process with Laplace exponent $ \E[ \exp(- \lambda Y_t)]= \exp(t \lambda ^ {2 \wedge \theta})$. Recall the sequence $(B_n)_ {n \geq 1}$ from Theorem \ref {thmintro:1}.
 
 \begin {thmintro} \label {thm:fluctenfants} The following convergence holds in distribution in $ \D([ 0,1] , \R)$:
$$ \left( \frac{Z_ { \fl {  \Delta( \t_n) t}}( \t_n)-\Delta ( \t_n) t/ \gamma}{B_ n}, 0 \leq t \leq 1\right) \quad \mathop { \longrightarrow}^ {(d)}_ { n \rightarrow \infty} \quad
\d \left( \frac{1}{\gamma} Y_t, 0 \leq t \leq 1 \right).$$
\end {thmintro}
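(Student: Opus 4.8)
The plan is to transfer the statement to a question about an \emph{unconditioned} random walk, where it becomes an instance of the invariance principle for i.i.d.\ partial sums. Code $\t_n$ by its Lukasiewicz path $W^{(n)}$, a random walk with step law $\nu_k=\mu_{k+1}$ ($k\geq-1$, of mean $\mathfrak m-1=-\gamma$), of length $\zt=n$, conditioned to stay $\geq0$ up to time $n-1$ and to hit $-1$ at time $n$. The subtree $T_j$ of the $j$-th child of $u_\star(\t_n)$ is explored during the interval $[\,U(\t_n)+1+Z_{j-1}(\t_n),\,U(\t_n)+Z_j(\t_n)\,]$; hence, writing $\widetilde W^{(n)}_s:=W^{(n)}_{U(\t_n)+1+s}-W^{(n)}_{U(\t_n)+1}$ for the Lukasiewicz path read just after $u_\star(\t_n)$,
\[
Z_j(\t_n)=|T_1|+\dots+|T_j|=\inf\{s\geq0:\ \widetilde W^{(n)}_s=-j\},\qquad 1\leq j\leq\Delta(\t_n),
\]
so that $(Z_j(\t_n))_j$ is simultaneously the partial-sum and the first-passage process of $\widetilde W^{(n)}$. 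Now I would use Proposition~\ref{prop:tilde} and Corollary~\ref{cor:indep}, which rest on the Armend\'ariz--Loulakis one-big-jump estimates: with probability tending to $1$, removing the unique big increment of $W^{(n)}$ produces a walk that is close in total variation to an \emph{unconditioned} $\nu$-walk, so that the subtrees $T_1,T_2,\dots$ grafted on $u_\star(\t_n)$ are, off a negligible event, the successive trees coded by an unconditioned $\nu$-walk $\widetilde W$ --- i.i.d.\ copies of a $\GW_\mu$ tree. In particular $Z_j(\t_n)=S_j:=\xi_1+\dots+\xi_j=\inf\{s:\widetilde W_s=-j\}$, where $\xi_i:=|T_i|$ are i.i.d.\ copies of $\zt$ under $\Pmu$.

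I would then apply Skorokhod's functional limit theorem to $(S_j)$. The variable $\zt$ has mean $1/\gamma$ and, since $\Prmu{\zt=n}$ inherits the tail $\mathcal L(n)n^{-1-\theta}$ of $\mu$ (a standard property of subcritical Galton--Watson trees), it lies in the domain of attraction of the spectrally positive $(2\wedge\theta)$-stable law. With $B_n$ as in Theorem~\ref{thmintro:1} --- the same slowly varying correction, normalised so that the Laplace exponent of the limit is that of $Y$ --- this gives
\[
\Bigl(\frac{S_{\fl{\gamma ns}}-ns}{B_n}\Bigr)_{0\leq s\leq C}\ \longrightarrow\ \Bigl(\frac1\gamma\,Y_s\Bigr)_{0\leq s\leq C}\qquad\text{in distribution in }\D([0,C],\R).
\]
The factor $1/\gamma$ is cleanest through the coding: $S_j$ is the first-passage time of $\widetilde W$ below $-j$, $\widetilde W$ has drift $-\gamma$ and $(\widetilde W_{\fl{ns}}+\gamma ns)/B_n\to Y_s$, hence the first passage below $-\fl{\gamma ns}$ occurs near time $ns$ with fluctuation $S_{\fl{\gamma ns}}\approx ns+\gamma^{-1}B_nY_s$, the continuity of the first-passage/inverse functional on the Skorokhod space upgrading this to the displayed convergence. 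The matching of $B_n$ and of the constant is the elementary computation already carried out in the finite-variance case, where $\mathrm{Var}(\zt)=\sigma^2/\gamma^3$ and $B_n=\sigma\sqrt{n/2}$.

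It remains to insert $j=\fl{\Delta(\t_n)t}$. By Theorem~\ref{thmintro:1}(i), $\Delta(\t_n)/(\gamma n)\to1$ in probability; setting $\phi_n(t):=\Delta(\t_n)t/(\gamma n)$ one has the pathwise identity
\[
\frac{Z_{\fl{\Delta(\t_n)t}}(\t_n)-\Delta(\t_n)t/\gamma}{B_n}=\Psi_n\bigl(\phi_n(t)\bigr),\qquad\Psi_n(s):=\frac{S_{\fl{\gamma ns}}-ns}{B_n},
\]
since $\fl{\Delta(\t_n)t}=\fl{\gamma n\,\phi_n(t)}$ and $\Delta(\t_n)t/\gamma=n\,\phi_n(t)$. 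As $\phi_n$ is linear with slope tending to $1$ in probability, $\phi_n\to\mathrm{id}$ in $\D([0,1],[0,1+\varepsilon])$, while $\Psi_n\to\frac1\gamma Y$ in distribution in $\D([0,1+\varepsilon],\R)$; composition being continuous at $(\frac1\gamma Y,\mathrm{id})$, one concludes $\Psi_n\circ\phi_n\to\frac1\gamma Y$ in distribution in $\D([0,1],\R)$, which is the assertion. The boundary data set aside here --- the length $U(\t_n)$ and height $W^{(n)}_{U(\t_n)}$ of the walk before $u_\star(\t_n)$, and the length of the walk after $u_\star(\t_n)$'s subtree --- are $O_{\mathbf P}(1)$ by Theorem~\ref{thm:cvd}(i) and its analogues, hence enter only through $o(B_n)$ corrections; the endpoint $t=1$ is covered by the same argument, consistently with Theorem~\ref{thmintro:1}(iii).

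The main obstacle is the first step: extracting from Proposition~\ref{prop:tilde} and Corollary~\ref{cor:indep} a total-variation comparison of the forest of subtrees of $u_\star(\t_n)$ with an initial segment of an unconditioned $\nu$-walk, valid on a time window of length of order $n$ and quantitative enough to carry a functional limit theorem over all of $[0,1]$. The structural subtlety is that $\Delta(\t_n)$ is itself a functional of this walk --- informally, the level it has reached once the ``size budget'' $n$ is exhausted --- so the constraint $\sum_{j\leq\Delta(\t_n)}|T_j|\approx n$ holds automatically rather than being imposed; this is exactly why the scaling limit is the free L\'evy process $Y$ and not a bridge of it.
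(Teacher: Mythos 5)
Your strategy is essentially the paper's: reduce to the subtrees grafted on $u_\star(\t_n)$ via the Armend\'ariz--Loulakis total-variation comparison (Proposition \ref{prop:tilde} and Corollary \ref{cor:indep}), apply the stable functional limit theorem to the partial sums of i.i.d.\ copies of $\zt$ under $\Pmu$ (whose tail is inherited from $\mu$ via Kemperman's formula, giving the normalization $B'_n$ and the relation $B'_n/B_n\to\gamma^{-1-1/(2\wedge\theta)}$ of Lemma \ref{lem:link}), and absorb the random index $\fl{\Delta(\t_n)t}$ using $\Delta(\t_n)/(\gamma n)\to1$ together with continuity of composition with a time change converging to the identity. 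All of these steps match the paper's proof.

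The one place where you are too quick is the endpoint $t=1$. Corollary \ref{cor:indep} is stated, and is only true, for $\fl{\gamma\eta n}$ subtrees with $\eta<1$ fixed: the identification of the $j$-th subtree of $u_\star(\t_n)$ with the $j$-th tree of an unconditioned i.i.d.\ forest requires that the first passage of the modified walk to level $-j$ occur strictly before time $n-1$ with high probability, which fails once $j$ is within $O(B_n)$ of $\Delta(\t_n)$; near the terminal level the subtrees feel the constraint that their total size is $n-O_{\mathbf P}(1)$. Asserting that the endpoint ``is covered by the same argument'' therefore leaves a genuine hole --- this is precisely the obstacle you yourself flag in your last paragraph without resolving it. The paper closes it cheaply: it proves the convergence on $[0,\eta]$ for each fixed $\eta<1$ and then uses the exchangeability identity that $(\xi_1(\t_n),\dots,\xi_{\Delta(\t_n)}(\t_n))$ and $(\xi_{\Delta(\t_n)}(\t_n),\dots,\xi_1(\t_n))$ have the same law, so that the reversed process also converges on $[0,\eta]$, which yields tightness on $[0,1]$ and control at $t=1$. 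You should either import that time-reversal step or give a direct argument for the last $O(B_n)$ subtrees; as written, the passage from $[0,\eta]$ to $[0,1]$ is the missing piece.
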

 \noindent Note that in the case when $ \mu$ has finite variance, we have $ \theta \geq 2$ and $Y$ is just a constant times standard Brownian motion. Let us mention that Theorem \ref{thm:fluctenfants} is used in \cite{JS12} to study scaling limits of random planar maps with a unique large face (see \cite[Proposition 3.1]{JS12}) and is also used in \cite{CKpercolooptrees} to study the shape of large supercritical site-percolation clusters on random triangulations.

\begin {corintro} \label {cor:intro}If $ \theta \geq 2$, $\max_ {1 \leq i \leq \Delta( \t_n)} \xi_{i}(\t_n)/B_n$ converges in probability toward $0$ as $n \rightarrow \infty$. If $ \theta<2$, for every $u > 0$ we have:
$$  \Pr {\frac{ \d1}{ B_n}\max_ {1 \leq i \leq \Delta( \t_n)} \xi_{i}(\t_n)\leq u}   \quad\mathop{\longrightarrow}_{n \rightarrow \infty} \quad  \exp \left( \frac{ 1}{ \gamma^ { \theta} \Gamma (1- \theta)} u^ {- \theta }  \right).$$
\end {corintro}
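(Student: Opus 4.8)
The plan is to deduce Corollary~\ref{cor:intro} from Theorem~\ref{thm:fluctenfants} by reading off the size of the largest jump of the limiting path. Set $S_n(t)=B_n^{-1}\big(Z_{\fl{\Delta(\t_n)t}}(\t_n)-\Delta(\t_n)t/\gamma\big)$ for $t\in[0,1]$, so that Theorem~\ref{thm:fluctenfants} reads $S_n\to\gamma^{-1}Y$ in distribution in $\D([0,1],\R)$. First I would observe that $S_n$ is a non-decreasing (c\`adl\`ag) step function minus the continuous linear drift $t\mapsto B_n^{-1}\Delta(\t_n)t/\gamma$, so that $S_n$ has a jump exactly at each time $i/\Delta(\t_n)$, $1\le i\le\Delta(\t_n)$, of size $\xi_i(\t_n)/B_n$, and no other jumps. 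Hence, writing $\Delta^+(f):=\sup_{0<t\le1}\big(f(t)-f(t-)\big)$ for the largest positive jump of $f\in\D([0,1],\R)$, we get the exact identity
$$\Delta^+(S_n)\;=\;\max_{1\le i\le\Delta(\t_n)}\frac{\xi_i(\t_n)}{B_n}.$$

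The key point is then that $\Delta^+$ is a measurable functional on $\D([0,1],\R)$ which is continuous at every $f$ having no jump at the endpoint $t=1$. This is classical for the Skorokhod $J_1$-topology (see e.g.~\cite[Chap.~VI]{JS03}): on the one hand no sequence converging to $f$ in $J_1$ can carry a jump that is, in the limit, strictly larger than $\Delta^+(f)$; on the other hand a jump of size $s$ of $f$ at an interior time $t_0$ must be matched by a jump of $f_n$ of size $s+o(1)$ near $t_0$, since a continuous ramp of $f_n$ over a short interval would be uniformly far — after any time-change close to the identity — from $f$, which takes values near both ends of the jump in every neighbourhood of $t_0$. Since $Y$ is a L\'evy process it almost surely has no jump at the deterministic time $1$ (and, when $\theta\in(1,2)$, it almost surely has finitely many jumps above any fixed level on $[0,1]$, so $\Delta^+(Y)$ is then attained at an interior time). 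Thus $\gamma^{-1}Y$ lies almost surely in the continuity set of $\Delta^+$, and the continuous mapping theorem yields
$$\max_{1\le i\le\Delta(\t_n)}\frac{\xi_i(\t_n)}{B_n}\quad\mathop{\longrightarrow}^{(d)}_{n\rightarrow\infty}\quad\Delta^+\!\left(\frac{Y}{\gamma}\right)\;=\;\frac{1}{\gamma}\,\Delta^+(Y).$$

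It remains to identify the law of $\Delta^+(Y)$. If $\theta\ge2$ then $2\wedge\theta=2$, the Laplace exponent of $Y$ is $\lambda\mapsto\lambda^2$, so $Y$ is $\sqrt2$ times a standard Brownian motion, its paths are continuous, $\Delta^+(Y)=0$ almost surely, and convergence in distribution to the constant $0$ is convergence in probability; this is the first assertion. If $\theta\in(1,2)$, then $Y$ is the spectrally positive $\theta$-stable L\'evy process with no Gaussian part, and (being centred, as $\theta>1$) its Laplace exponent is $\int_{(0,\infty)}(e^{-\lambda x}-1+\lambda x)\,\pi(dx)$; imposing this to equal $\lambda^\theta$ with the ansatz $\pi(dx)=c_\theta x^{-1-\theta}\,dx$, together with $\int_0^\infty(e^{-u}-1+u)u^{-1-\theta}\,du=\Gamma(-\theta)$, forces $c_\theta=\Gamma(-\theta)^{-1}$. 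The jumps of $Y$ over $[0,1]$ form a Poisson point process of intensity $dt\otimes\pi(dx)$, so, using $\theta\,\Gamma(-\theta)=-\Gamma(1-\theta)$,
$$\Pr{\Delta^+(Y)\le a}\;=\;\exp\!\big(-\pi((a,\infty))\big)\;=\;\exp\!\left(-\frac{a^{-\theta}}{\theta\,\Gamma(-\theta)}\right)\;=\;\exp\!\left(\frac{a^{-\theta}}{\Gamma(1-\theta)}\right),\qquad a>0.$$
This distribution function is continuous, so the limit above can be written pointwise in $u>0$; substituting $a=\gamma u$ in the law of $\gamma^{-1}\Delta^+(Y)$ gives exactly $\exp\big(\gamma^{-\theta}\Gamma(1-\theta)^{-1}u^{-\theta}\big)$, as claimed.

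The step I expect to demand the most care is the continuity of the largest-jump functional for the $J_1$-topology: one must ensure that in the limit no jump of $S_n$ is lost (guaranteed because $\gamma^{-1}Y$ has a genuine jump, attained at an interior time, of size $\Delta^+(Y/\gamma)$) and that no spurious large jump is created (the $\limsup$ half above). A clean way to organise this is to pass to almost sure convergence via Skorokhod's representation theorem and argue pathwise, disposing of the (almost surely irrelevant) endpoint case separately. The remaining ingredients — the identification of the L\'evy measure of $Y$ and the elementary identities for the Gamma function — are routine.
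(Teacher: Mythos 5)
Your proof is correct and follows essentially the same route as the paper: apply the continuity of the largest-jump functional on $\D([0,1],\R)$ to the convergence of Theorem~\ref{thm:fluctenfants}, then identify the law of $\sup_{s}(Y_s-Y_{s-})$ from the L\'evy measure $\nu(dx)=\mathbbm{1}_{\{x>0\}}\,dx/(\Gamma(-\theta)x^{1+\theta})$ via the Poisson structure of the jumps. Your additional care about continuity of the functional at paths without a jump at $t=1$, and your derivation of the normalising constant in the L\'evy measure, are correct elaborations of steps the paper leaves implicit.
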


The dichotomy between the cases $\theta<2$ and $\theta \geq2$ arises from the fact that $Y$ is continuous if and only if $\theta\geq 2$.

\paragraph{Height of non-generic trees.} One of the main contributions of this work it to understand the growth of the height  $\H( \t_n)$ of $ \t_n$, which is by definition the maximal generation in $\t_n$. We establish the key fact that  $\H( \t_n)$ grows logarithmically in $n$:
  \begin {thmintro}\label {thmintro:height} For every sequence $( \lambda_n)_ { n \geq 1}$ of positive real numbers tending to infinity:
$$ \Pr { \left|\H( \t_n) -\frac{ \ln (n)}{ \ln(1/\mathfrak {m})} \right| \leq  \lambda_n}  \quad\mathop{\longrightarrow}_{n \rightarrow \infty} \quad1. $$\end {thmintro}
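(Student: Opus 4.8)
The guiding picture is the one described after Theorem~\ref{thmintro:1}: with high probability $\t_n$ consists of a ``condensation spine'' of bounded length carrying finitely many small $\GW_\mu$ trees, on top of which $\Delta(\t_n)\sim\g n$ i.i.d.\ $\GW_\mu$ trees are grafted. Hence $\H(\t_n)$ should equal, up to an $O(1)$ error, the maximum of $\sim\g n$ independent copies of the height of an unconditioned $\GW_\mu$ tree. The height of a $\GW_\mu$ tree has a geometric tail with ratio $\mathfrak m$, so this maximum concentrates around $\ln(\g n)/\ln(1/\mathfrak m)=\ln n/\ln(1/\mathfrak m)+O(1)$. We may assume $(\lambda_n)$ is integer-valued.

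\textbf{Step 1: tail of the height of a $\GW_\mu$ tree.} Let $g_h=\Prmu{\H(\tau)\le h}$ for $h\ge -1$. The first-generation decomposition gives $g_{-1}=0$ and $g_{h+1}=f_\mu(g_h)$, where $f_\mu(s)=\sum_{j}\mu_j s^j$. Since $\mu$ is subcritical, $1$ is the unique fixed point of $f_\mu$ in $[0,1]$ and $f_\mu'(1)=\mathfrak m<1$. Writing $1-g_{h+1}=f_\mu'(\xi_h)(1-g_h)$ with $\xi_h\in(g_h,1)$ and $\xi_h\uparrow 1$, and using that $f_\mu'(\xi_h)-\mathfrak m=O(1-g_h)$ is summable, one gets $1-g_h\sim c\,\mathfrak m^h$ for some constant $c\in(0,\infty)$; in particular there are $0<c_1\le c_2<\infty$ with $c_1\mathfrak m^h\le \Prmu{\H(\tau)\ge h}\le c_2\mathfrak m^h$ for all $h\ge 0$. (For the upper bound one may instead simply use $\Prmu{\H(\tau)\ge h}\le \Esmu{\#\{v\in\tau:|v|=h\}}=\mathfrak m^h$.) No moment assumption on $\mu$ is used here.

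\textbf{Step 2: reduction to the subtrees above $u_\star$.} For $1\le i\le\Delta(\t_n)$ let $T_i$ be the subtree of $\t_n$ rooted at the $i$-th child of $u_\star(\t_n)$, and let $\H^-$ be the maximal generation of a vertex of $\t_n$ that is not a strict descendant of $u_\star(\t_n)$. Then
$$\H(\t_n)=\max\!\Bigl(\H^-,\ |u_\star(\t_n)|+1+\max_{1\le i\le\Delta(\t_n)}\H(T_i)\Bigr).$$
By Proposition~\ref{prop:tilde} (equivalently, by the local convergence of $\t_n$ to $\widehat{\mathcal T}$ together with the fact that the part of $\widehat{\mathcal T}$ not lying below its vertex of infinite degree is a.s.\ finite) the quantity $\H^-$ is tight, and $|u_\star(\t_n)|$ is tight by Theorem~\ref{thm:cvd}. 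Since $\lambda_n\to\infty$, with probability tending to $1$ we have $\H^-\le\lambda_n/2$ and $|u_\star(\t_n)|+1\le\lambda_n/2$. On this event $\H(\t_n)$ and $M_n:=\max_{1\le i\le\Delta(\t_n)}\H(T_i)$ differ by at most $\lambda_n/2$, so it suffices to prove that $\Pr{|M_n-\ln n/\ln(1/\mathfrak m)|\le\lambda_n/2}\to 1$.

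\textbf{Step 3: the maximum over the grafted subtrees.} By Theorem~\ref{thmintro:1}(i), $\Delta(\t_n)/n\to\g$ in probability, so we may restrict to the event $\{\tfrac\g2 n\le \Delta(\t_n)\le 2\g n\}$, which has probability tending to $1$. By Corollary~\ref{cor:indep}, the family $(T_i)_{1\le i\le\Delta(\t_n)}$ may, at the cost of an error tending to $0$, be replaced by an i.i.d.\ family $(T^{(i)})_{i\ge1}$ of $\GW_\mu$ trees when evaluating the probabilities $\Pr{M_n\ge h}$ and $\Pr{M_n<h}$. For i.i.d.\ $\GW_\mu$ trees and $N\in[\tfrac\g2 n,2\g n]$, Step~1 gives
$$\Pr{\max_{i\le N}\H(T^{(i)})<h}\le \exp(-N c_1\mathfrak m^{h}),\qquad \Pr{\max_{i\le N}\H(T^{(i)})\ge h}\le N c_2\mathfrak m^{h}.$$
Plugging $h=\fl{\ln n/\ln(1/\mathfrak m)-\lambda_n/2}$ into the first bound (so $\mathfrak m^{h}\ge \tfrac1n\mathfrak m^{-\lambda_n/2}$) yields a bound $\exp(-\tfrac{c_1\g}{2}\mathfrak m^{-\lambda_n/2})\to 0$, and $h=\ce{\ln n/\ln(1/\mathfrak m)+\lambda_n/2}$ in the second (so $\mathfrak m^{h}\le \tfrac1n\mathfrak m^{\lambda_n/2}$) yields a bound $2\g c_2\,\mathfrak m^{\lambda_n/2}\to 0$. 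Combining the two estimates with Step~2 gives the theorem.

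\textbf{Main obstacle.} The only genuinely delicate point is the passage, in Steps~2 and~3, from the conditioned tree $\t_n$ to i.i.d.\ $\GW_\mu$ trees: controlling the \emph{maximum} $M_n$ requires not just convergence of finitely many marginals but a coupling (or total-variation) statement that is uniform over the $\Theta(n)$ grafted subtrees, as well as the finiteness of the non-condensation part of $\t_n$. This is exactly what the Armend\'ariz--Loulakis-type results packaged in Proposition~\ref{prop:tilde} and Corollary~\ref{cor:indep} provide, and the care lies in invoking them with the correct vanishing error terms; everything else (Step~1 and the union bounds of Step~3) is elementary.
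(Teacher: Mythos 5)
Your overall strategy is the same as the paper's: reduce $\H(\t_n)$ to the maximum height of the $\approx\gamma n$ subtrees grafted on $u_\star(\t_n)$, use the Armend\'ariz--Loulakis consequences (Proposition \ref{prop:tilde}, Corollary \ref{cor:indep}) to replace them by i.i.d.\ $\GW_\mu$ trees, and conclude with the geometric tail $\Prmu{\H(\tau)>k}\asymp\mathfrak m^k$ and product/union bounds. Your Step 2 is in fact slightly more careful than the paper (which does not explicitly discuss the part of the tree outside the descendants of $u_\star$); for the tightness of $\H^-$ the cleanest reference is Remark \ref{rem:cl}, which shows that the number of vertices of $\t_n$ not descending from a child of $u_\star(\t_n)$ converges in distribution.

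There is, however, a gap in Step 3. Corollary \ref{cor:indep} is stated only for the \emph{first} $\fl{\gamma\eta n}$ subtrees with $\eta\in(0,1)$ \emph{fixed}; since $\Delta(\t_n)\sim\gamma n$, it never covers the whole family $(\mathcal T_i)_{1\le i\le\Delta(\t_n)}$, so you cannot invoke it directly to replace all of them by i.i.d.\ trees when bounding $\Pr{M_n\ge h}$ from above (the last $\approx\gamma(1-\eta)n$ subtrees are uncontrolled). The paper closes exactly this hole by splitting the children of $u_\star(\t_n)$ into two halves and observing that $(\mathcal T_{\Delta(\t_n)},\ldots,\mathcal T_1)$ has the same law as $(\mathcal T_1,\ldots,\mathcal T_{\Delta(\t_n)})$, so the second half is distributed as the first half of the reversed forest and the corollary applies to each half separately; you need to add this (two-line) argument. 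The lower bound is fine as written since it only uses the first $\fl{\gamma n/2}$ subtrees. Separately, in Step 1 the claim $f'_\mu(\xi_h)-\mathfrak m=O(1-g_h)$ requires $f''_\mu(1)<\infty$, i.e.\ finite variance, which fails for $\theta\le 2$, and the parenthetical ``no moment assumption is used'' is false: the condition for $1-g_h\sim c\,\mathfrak m^h$ with $c>0$ is $\sum_j j\ln(j)\mu_j<\infty$ (this is the content of the reference \cite{HSV67} used in the paper, via \eqref{eq:esH}). That condition does hold under $(H_\theta)$, so the conclusion of Step 1 is correct, but the justification should be repaired or replaced by the citation.
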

\noindent
Note that the situation is completely different from the critical case, where $ \H( \t_n)$ grows like a power of $n$. Theorem \ref {thmintro:height} implies that $ \H( \t_n)/  \ln(n) \to \ln(1/\mathfrak {m})$ in probability as $n \to \infty$, thus partially answering Problem 20.7 in \cite{Jan12}. Proposition \ref{prop:lp} below also shows that this convergence  holds in all the spaces $ \mathbb{L}_{p}$ for $p \geq 1$. Theorem \ref{thmintro:height} can be intuitively explained by the fact that the height of $ \t_{n}$ should be close to the maximum of the height of $  \gamma n$ independent subcritical $ \GW_ \mu$ trees, which is indeed of order $ \ln(n)$.

Since $ \t_{n}$ grows roughly as $ \ln(n)$ as $ n \rightarrow \infty$, it is natural to wonder if one could hope to obtain a scaling limit after rescaling the distances in $ \t_{n}$ by $ \ln(n)$. We show that the answer is negative and that we cannot hope to obtain a nontrivial scaling limit for $ \t_{n}$ for the Gromov--Hausdorff topology, in sharp contrast with the critical case (see \cite {Du03}). This partially answers a question of Janson \cite[Problem 20.11]{Jan12}.

\begin {thmintro} \label{thm:GH}
The sequence $ ( \ln(n)^{-1} \cdot \t_{n})_{n \geq 1}$ is not tight for the Gromov--Hausdorff topology, where $\ln(n)^{-1} \cdot \t_{n}$ stands for the metric space obtained from $ \t_{n}$ by multiplying all distances by $\ln(n)^{-1}$.
\end{thmintro}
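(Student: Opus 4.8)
The plan is to exhibit, with probability bounded away from $0$, two "long thin arms" in $\t_n$ whose mutual distance is of order $\ln n$ but which are attached to the rest of the tree at a bounded height, so that rescaling by $\ln(n)^{-1}$ cannot produce a compact limit. Concretely, recall from Theorem \ref{thmintro:1} and the structure result (Proposition \ref{prop:tilde} and Corollary \ref{cor:indep}) that, conditionally on $\Delta(\t_n)$, the tree $\t_n$ looks like a short spine decorated with i.i.d.\ $\GW_\mu$ trees, with $\Delta(\t_n)\sim\gamma n$ further independent $\GW_\mu$ trees grafted at the condensation vertex $u_\star(\t_n)$, whose height $|u_\star(\t_n)|$ is tight by Theorem \ref{thm:cvd}(ii). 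First I would fix a large constant $A$ and a small constant $c>0$; the key deterministic input is that a single (subcritical) $\GW_\mu$ tree has height exceeding $h$ with probability $\asymp \mathfrak m^{h}$ (this is the elementary estimate already used to motivate Theorem \ref{thmintro:height}), so among $N$ independent copies the number whose height exceeds $\frac{\ln N}{\ln(1/\mathfrak m)} - A$ is, for $A$ large, with probability close to $1$ at least two. Applying this with $N=\Delta(\t_n)\sim\gamma n$ (which is legitimate since these $N$ subtrees are genuinely i.i.d.\ $\GW_\mu$ by Corollary \ref{cor:indep}), I get that with probability at least $1-\varepsilon$ for all large $n$ there exist two distinct children of $u_\star(\t_n)$ whose pendant subtrees both have height at least $\frac{\ln n}{\ln(1/\mathfrak m)} - A'$.

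Next I would extract from each of those two subtrees a vertex at that large generation; call them $x_n$ and $y_n$. Their common ancestor in $\t_n$ is (a child of) $u_\star(\t_n)$, so the graph distance $d_{\t_n}(x_n,y_n)$ equals $|x_n|+|y_n|-2|u_\star(\t_n)|$, which is at least $2\big(\frac{\ln n}{\ln(1/\mathfrak m)}-A'\big)-2|u_\star(\t_n)|$. Since $|u_\star(\t_n)|$ is tight, on an event of probability $\geq 1-2\varepsilon$ we have $d_{\t_n}(x_n,y_n)\geq \frac{\ln n}{\ln(1/\mathfrak m)}$, say, for all large $n$. Now suppose for contradiction that $(\ln(n)^{-1}\cdot\t_n)_{n\geq 1}$ were tight for the Gromov--Hausdorff topology. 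Then along a subsequence it would converge in distribution to some random compact metric space $(\mathcal X,d)$; by Skorokhod representation we may assume almost sure convergence. A compact metric space is totally bounded, so for every $\eta>0$ it is covered by finitely many balls of radius $\eta$. But the rescaled tree contains, on an event of probability $\geq 1-2\varepsilon$, a pair of points at distance $\geq \frac{1}{\ln(1/\mathfrak m)}+o(1)$; more usefully, iterating the argument — pick $k$ of the $\gamma n$ subtrees with height $\geq \frac{\ln n}{k\ln(1/\mathfrak m)}$, which again happens with probability close to $1$ for fixed $k$ since $\gamma n$ grows polynomially — produces, in $\ln(n)^{-1}\cdot\t_n$, $k$ points that are pairwise at distance at least $\frac{2}{k\ln(1/\mathfrak m)}-o(1)$ from a common hub at height $o(1)$, i.e.\ a "star" with $k$ arms of macroscopic length.

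The cleanest way to close the argument is therefore to show the rescaled trees contain arbitrarily large "bushes": for every $k$ and every $\varepsilon>0$, with probability $\geq 1-\varepsilon$ for $n$ large the space $\ln(n)^{-1}\cdot\t_n$ contains a vertex (near $u_\star(\t_n)$, at rescaled height $o(1)$) from which emanate $k$ disjoint paths of rescaled length $\geq \delta_k$ for some $\delta_k>0$ independent of $n$. This is incompatible with tightness: if the laws were tight, then for some compact $\mathcal K$ of metric spaces we would have $\ln(n)^{-1}\cdot\t_n\in\mathcal K$ with probability $\geq 1-\varepsilon$ for all $n$, and a compactness argument (e.g.\ via a uniform bound, over $\mathcal K$, on the number of disjoint $\delta$-separated subsets, which follows from the Blaschke-type characterization of GH-compact families by uniform total boundedness) would cap the number of arms a bush can have — contradicting the unbounded $k$. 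I expect the main obstacle to be exactly this last quantitative step: pinning down the right "uniform total boundedness" invariant of a GH-precompact family and verifying that a $k$-armed bush of arm length $\delta$ forces a large value of that invariant; this should follow from the standard fact that precompactness for GH is equivalent to a uniform upper bound on covering numbers $N(\mathcal X,\eta)$ for each $\eta>0$, since $k$ disjoint arms of length $\delta$ give $\geq k$ points that are pairwise $\geq\delta$-separated, hence $N(\mathcal X,\delta/2)\geq k$. Everything else is a matter of combining the i.i.d.\ decomposition of Corollary \ref{cor:indep}, the single-tree height tail $\Pr{\H(\t)\geq h}\asymp\mathfrak m^h$, and the tightness of $|u_\star(\t_n)|$ from Theorem \ref{thm:cvd}.
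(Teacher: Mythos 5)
Your ingredients are the right ones, and they are the same as the paper's (the asymptotic i.i.d.\ decomposition of the subtrees grafted at $u_\star(\t_n)$ from Corollary \ref{cor:indep}, the geometric tail $\Prmu{\H(\tau)>h}\sim c\,\mathfrak m^{h}$, and the characterization of Gromov--Hausdorff precompactness by uniform total boundedness of covering numbers). But the final step, as written, does not yield a contradiction, because you let the separation scale shrink with the number of separated points. For each fixed $k$ you produce a bush with $k$ arms of rescaled length $\delta_k=\tfrac{1}{k\ln(1/\mathfrak m)}$ and conclude $N(\mathcal X,\delta_k/2)\geq k$. Since $\delta_k\asymp 1/k$, this only shows $N(\mathcal X,\eta)\gtrsim 1/\eta$ along the scales $\eta=\delta_k/2$, which is perfectly consistent with uniform total boundedness (for instance, a sequence of stars in which the $k$-th space has $k$ arms of length $1/k$ is GH-precompact). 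To contradict tightness you must exhibit one \emph{fixed} scale $\eta_0>0$ at which the covering numbers of $\ln(n)^{-1}\cdot\t_n$ are unbounded on an event of probability bounded away from $0$.

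The repair is already contained in your own computation, and it is exactly the paper's proof: fix $\eta\in(0,1/\ln(1/\mathfrak m))$ once and for all. Then $\Prmu{\H(\tau)\geq \eta\ln n}\sim c\, n^{-\eta\ln(1/\mathfrak m)}$ with exponent $\eta\ln(1/\mathfrak m)<1$, so among the $\lfloor \gamma n/2\rfloor$ asymptotically i.i.d.\ subtrees $\mathcal T_1(\t_n),\dots,\mathcal T_{\lfloor \gamma n/2\rfloor}(\t_n)$ the number having height at least $\eta\ln n$ dominates a binomial with mean of order $n^{1-\eta\ln(1/\mathfrak m)}\to\infty$; hence it exceeds any fixed $k$ (the paper shows it exceeds $\ln n$) with probability tending to $1$. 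Choosing one vertex at depth $\lceil \eta\ln n\rceil$ inside each such subtree gives arbitrarily many points of $\ln(n)^{-1}\cdot\t_n$ that are pairwise $2\eta$-separated, so the covering number at the fixed radius $\eta$ tends to infinity in probability, which does contradict tightness. Two minor further points: the opening discussion of two arms and of $d_{\t_n}(x_n,y_n)$ plays no role in the conclusion (two points at bounded rescaled distance is no obstruction to tightness) and can be dropped; and Corollary \ref{cor:indep} is stated for $\lfloor\gamma\eta n\rfloor$ subtrees with $\eta<1$, so you should apply it to (say) the first $\lfloor\gamma n/2\rfloor$ subtrees rather than to all $\Delta(\t_n)$ of them.
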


The Gromov--Hausdorff topology is the topology on compact metric spaces (up to isometries) defined by the Gromov--Hausdorff distance, and is often used in the study of scaling limits of different classes of random graphs (see \cite[Chapter 7]{BBI01} for background concerning the Gromov--Hausdorff topology). 

However, we establish the convergence of the finite-dimensional marginal distributions of the height function coding $\t_{n}$. If $ \tau$ is a tree, for $0 \leq i \leq  \zt-1$, denote by $H_{i}( \tau)$ the generation of the $i$-th vertex of $ \tau$ in the lexicographical order.

\begin {thmintro} \label {thm:cvfinidim} Let $ k \geq 1$ be an integer and fix $0 < t_{1} < \cdots <  t_{k}<1$. Then
$$ (H_{ \fl{n t_{1}}}(\t_n), H_{\fl{n t_{2}}}(\t_n), \ldots, H_{\fl{n t_{k}}}( \t_{n}))  \quad\mathop{\longrightarrow}^{(d)}_{n \rightarrow \infty} \quad ( 1+\mathbf{e_{0}}+\mathbf{e_{1}}, 1+\mathbf{e_{0}}+\mathbf{e_{2}}, \ldots, 1+\mathbf{e_{0}}+\mathbf{e_{k}}),$$
where $(\mathbf{e_{i}})_{i \geq 0}$ is a sequence of i.i.d. geometric random variables of parameter $ 1 - \mathfrak{m}$ (that is $ \Pr {\mathbf{e _{0}}=i}=(1 - \mathfrak{m}) \mathfrak{m}^i$ for $i \geq 0$).
\end {thmintro}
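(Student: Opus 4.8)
Recall from Proposition~\ref{prop:tilde} and Corollary~\ref{cor:indep} that $\t_n$ is built from a finite spine joining the root to the vertex $u_\star(\t_n)$ of maximal out-degree, decorated by independent $\GW_\mu$ trees, and that the $\Delta(\t_n)$ subtrees grafted on the children of $u_\star(\t_n)$ are, conditionally on their sizes, independent $\GW_\mu$ trees; moreover the total number of vertices of $\t_n$ that are not strict descendants of $u_\star(\t_n)$ is $O_\P(1)$, while $\Delta(\t_n)\sim\gamma n$ by Theorem~\ref{thmintro:1}. Fix $j\in\{1,\dots,k\}$ and let $v_j$ be the $\fl{nt_j}$-th vertex of $\t_n$. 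Since $U(\t_n)=O_\P(1)$ by Theorem~\ref{thm:cvd} and the descendants of $u_\star(\t_n)$ occupy a block of $n-O_\P(1)$ consecutive positions, with probability tending to $1$ the vertex $v_j$ is a strict descendant of $u_\star(\t_n)$, so that
\[
H_{\fl{nt_j}}(\t_n)=|u_\star(\t_n)|+1+d_j,
\]
where $d_j$ denotes the height of $v_j$ inside the grafted subtree containing it, measured from the root of that subtree. Since $|u_\star(\t_n)|$ converges in distribution to a geometric random variable of parameter $1-\mathfrak m$ by Theorem~\ref{thm:cvd}(ii), it suffices to prove that $(d_1,\dots,d_k)$ converges to a vector of i.i.d.\ geometric random variables of parameter $1-\mathfrak m$, jointly with and independently of $|u_\star(\t_n)|$.

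The sizes of the grafted subtrees are $o_\P(n)$ by Corollary~\ref{cor:intro}, while $n|t_i-t_j|\to\infty$ for $i\ne j$; hence, with probability tending to $1$, the $v_j$ lie in $k$ distinct grafted subtrees. Conditionally on $\Delta(\t_n)$ and on the subtree sizes $\xi_1(\t_n),\dots,\xi_{\Delta(\t_n)}(\t_n)$, these subtrees are independent, the $i$-th one being a $\GW_\mu$ tree conditioned on having $\xi_i(\t_n)$ vertices; together they form a $\GW_\mu$ forest with $\Delta(\t_n)$ trees and total size $n-O_\P(1)$. Let $S_j$ be the size of the grafted subtree containing $v_j$ and $P_j\in\{0,\dots,S_j-1\}$ the rank of $v_j$ in it. The key step is an inspection-paradox statement for this size-conditioned forest: $\big((S_j,P_j)\big)_{1\le j\le k}$ converges in distribution to a family of independent pairs, where $S_j$ has the size-biased law $\pi(s)=(1-\mathfrak m)\,s\,\Prmu{\zt=s}$ — a probability distribution because $\Esmu{\zt}=1/\gamma$ — and, conditionally on $S_j=s$, $P_j$ is uniform on $\{0,\dots,s-1\}$. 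This is the step I expect to be the main obstacle: one must describe, jointly over the $k$ macroscopically separated deterministic positions $\fl{nt_j}$, the block of the random partition of $\{0,\dots,n-O_\P(1)\}$ by the subtree sizes that contains $\fl{nt_j}$, under the constraint that the number of blocks equals $\Delta(\t_n)$ and that they sum to the prescribed total. This rests on the ratio-limit property $\Prmu{\zt=n-s}/\Prmu{\zt=n}\to1$ for fixed $s$, itself a consequence of Assumption~$(H_\theta)$ and the asymptotics of $\Prmu{\zt=n}$ already used in Propositions~\ref{prop:invGW} and~\ref{prop:luka}; the separation of the positions $\fl{nt_j}$ provides the joint independence.

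Granting this, conditionally on $S_j=s$ the vertex $v_j$ is a uniform vertex of a $\GW_\mu$ tree conditioned to have $s$ vertices, and $d_j$ is its height. Writing $N_h(\tau)$ for the number of vertices of $\tau$ at height $h$ and using $\Esmu{N_h(\tau)}=\mathfrak m^h$,
\[
\lim_{n\to\infty}\Pr{d_j=h}=\sum_{s\ge1}\pi(s)\,\frac{\Esmu{N_h(\tau)\,\mathbf 1_{\zt=s}}}{s\,\Prmu{\zt=s}}=(1-\mathfrak m)\sum_{s\ge1}\Esmu{N_h(\tau)\,\mathbf 1_{\zt=s}}=(1-\mathfrak m)\,\mathfrak m^h,
\]
the interchange of limit and sum being justified by the facts that $\pi$ is a probability distribution, that $N_h(\tau)\le\zt$, and that $S_j\Rightarrow\pi$. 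Running the computation jointly and using the conditional independence of the grafted subtrees gives the joint convergence of $(d_1,\dots,d_k)$ to i.i.d.\ geometric variables of parameter $1-\mathfrak m$. Finally, conditionally on $\Delta(\t_n)$, the part of $\t_n$ formed by $u_\star(\t_n)$ together with its non-descendants is independent of the grafted forest (Corollary~\ref{cor:indep}), and translating the reference positions inside the forest by $O_\P(1)$ affects neither the limit in the inspection-paradox step nor the conclusion; therefore $|u_\star(\t_n)|$ is asymptotically independent of $(d_1,\dots,d_k)$. Combining this with Theorem~\ref{thm:cvd}(ii) and the identity $H_{\fl{nt_j}}(\t_n)=|u_\star(\t_n)|+1+d_j$ yields the announced convergence, with $\mathbf{e_0}$ the limit of $|u_\star(\t_n)|$ and $\mathbf{e_j}$ the limit of $d_j$.
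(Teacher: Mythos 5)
Your route is genuinely different from the paper's. The paper never decomposes $\t_n$ into explicit subtrees for this theorem: it writes $H_{\fl{nt_j}}(\t_n)=1+\widetilde{H}_{\fl{nt_j}-U(\t_n)-1}(\t_n)+\widetilde{H}_{n-1}(\t_n)$ via the identity \eqref{eq:relHW} and Proposition \ref{prop:links}, transfers everything to the unconditioned walk by Proposition \ref{prop:tilde}, and then identifies the limit by time reversal as a count of ascending ladder epochs (Lemma \ref{lem:fd} and Remark \ref{rem:gen2}), the geometric law coming from $\Pr{\forall i\geq 1,\ W_i\leq -1}=\gamma$. Your identity $H_{\fl{nt_j}}(\t_n)=|u_\star(\t_n)|+1+d_j$ is the same skeleton, and your closing computation $(1-\mathfrak m)\sum_s\Esmu{N_h(\tau)\mathbbm{1}_{\zt=s}}=(1-\mathfrak m)\mathfrak m^h$ is an attractive, more conceptual explanation of where the geometric law comes from. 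But the argument as written has a real gap exactly where you flag it: the joint inspection-paradox lemma is asserted, not proved, and the tool you point to is not the right one. The ratio-limit property $\Prmu{\zt=n-s}/\Prmu{\zt=n}\to 1$ would be what you need if you attacked the size-and-number-constrained forest directly, and that is a genuinely delicate large-deviation computation. The workable route is the opposite order: first use Corollary \ref{cor:indep} with $\eta$ chosen so that $t_k<\eta<1$ (together with $\widetilde{\z}_{\fl{\gamma\eta n}}(\t_n)/n\to\eta$, so all positions $\fl{nt_j}$ fall in the first $\fl{\gamma\eta n}$ trees) to replace the constrained forest by an i.i.d.\ $\GW_\mu$ forest in total variation, and only then apply the renewal theorem to the renewal process with step law $\Prmu{\zt=\cdot}$ — which is applicable because $\Esmu{\zt}=1/\gamma<\infty$ and $\mu_0>0$ makes the step law aperiodic. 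The joint statement over $k$ separated positions then follows from the regeneration property of the renewal process. None of this is hard, but none of it is in your proof, and "ratio-limit property" points the reader down the wrong path.

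A second, smaller gap: you justify the asymptotic independence of $|u_\star(\t_n)|$ from $(d_1,\dots,d_k)$ by citing Corollary \ref{cor:indep}, but that corollary only concerns the grafted forest and says nothing about its joint law with the spine. What you actually need is the full strength of Proposition \ref{prop:tilde}: the entire increment vector $(\widetilde{\mathcal X}_1,\dots,\widetilde{\mathcal X}_{n-1})$ is close in total variation to i.i.d.\ increments, the spine data $(|u_\star(\t_n)|,U(\t_n))$ being a function of the increments with indices near $n-1$ and the $d_j$'s being functions of increments with indices at most $\widetilde{\z}_{\fl{\gamma\eta n}}(\t_n)\approx\eta n$; independence in the limit then comes from the disjointness of these index blocks (this is precisely the $N_0$ truncation argument in Lemma \ref{lem:fd}). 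The same total-variation statement is also what lets you replace the random offset $\fl{nt_j}-U(\t_n)-1$ by a deterministic position up to an $O_{\P}(1)$ shift without disturbing the renewal limit. With these two repairs the proof goes through; as it stands, the central lemma is a placeholder.
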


Informally, the random variable $\mathbf{e}_{0}$ describes the length of the spine, and the random variables $(\mathbf{e}_{1}, \ldots, \mathbf{e}_{k})$ describe the height of vertices chosen in a forest of independent subcritical $ \GW_ \mu$ trees. Note that these finite-dimensional marginal distributions converge without scaling, even though the height of $ \t_{n}$ is of order $ \ln(n)$.

\bigskip

This text is organized as follows. We first recall the definition and basic properties of Galton--Watson trees. In Section 2, we establish limit theorems for large conditioned non-generic Galton--Watson trees. We conclude by giving  possible extensions and formulating some open problems.

\medskip

\paragraph {Acknowledgments.} I am grateful to Nicolas Broutin for interesting remarks, to Grégory Miermont for a careful reading of a preliminary version of this work, to Olivier Hénard for Remark \ref {rem:gen}, to Pierre Bertin and Nicolas Curien for useful discussions and to Jean-François Le Gall and an anonymous referee for many useful comments.

\bigskip

 \tableofcontents

\bigskip

\section{Galton--Watson trees}
\label {sec:GW}
\subsection {Basic definitions}

We briefly recall the
formalism of plane trees (also known in the literature as
rooted ordered trees) which can be found in \cite{LG06} for
example.

\begin{defn}Let $\Z_+=\{0,1,2,\ldots\}$ be the set of all nonnegative integers and let $\N$ be the set of all positive integers. Let also $U$ be the set of all labels defined by:
$$U = \bigcup_{n=0}^{\infty}(\N)^n$$
where by convention $(\N)^0 = \{\emptyset\}$. An element of $U$ is a
sequence $u=u_1\cdots u_k$ of positive integers and we set
$|u| = k$, which represents the \og generation \fg \, of $u$. If $u = u_1
\cdots u_i$ and $v = v_1 \cdots v_j$ belong to $U$, we write $uv= u _1
\cdots u_i v_1 \cdots v_j$ for the concatenation of $u$ and $v$. In
particular, we have $u \emptyset = \emptyset u = u$. Finally, a
\emph{plane tree} $\tau$ is a finite or infinite subset of $U$ such
that:
\begin{itemize}
\item[(i)] $\emptyset \in \tau$,
\item[(ii)] if $v \in \tau$ and $v=ui$ for some $i \in \N$, then $u
\in \tau$,
\item[(iii)]  for every $u \in \tau$, there exists $k_u(\tau) \in  \{0,1,2, \ldots \} \cup  \{ \infty\}$ (the number of children of $u$) such that, for every $j \in \N$, $uj \in \tau$ if and only
if $1 \leq j \leq k_u(\tau)$.\end{itemize}
Note that in contrast with \cite {LG05,LG06} we allow the possibility $k_u ( \tau)=\infty$ in (iii). In the following, by \emph{tree} we will always mean plane
tree, and we denote the set of all trees by $\T$ and the set of all finite trees by $\T_{f}$. We will often view each vertex of
a tree $\tau$ as an individual of a population whose $\tau$ is the
genealogical tree. The total progeny or size of $\tau$ will be
denoted by $|\tau|= \textrm{Card}(\tau)$.  If $\tau$ is a tree and
$u \in \tau$, we define the shift of $\tau$ at $u$ by $T_u \tau=\{v
\in U; \, uv \in \tau\}$, which is itself a tree.
\end{defn}

\begin{defn}Let $\rho$ be a probability measure on $\Z_+$. The law of the
Galton--Watson tree with offspring distribution $\rho$ is the unique
probability measure $\P_\rho$ on $\T$ such that:
\begin{itemize}
\item[(i)] $\P_\rho(k_\emptyset=j)=\rho(j)$ for $j \geq 0$,
\item[(ii)] for every $j \geq 1$ with $\rho(j)>0$, conditionally on $\{ k_{\varnothing}=j\}$, the subtrees
$T_1 \tau, \ldots, T_j \tau$ are independent and identically distributed with distribution $\P_\rho$.
\end{itemize}
A random tree whose distribution is $\P_\rho$ will be called a Galton--Watson tree with offspring distribution $ \rho$, or in short a $\GW_\rho$ tree.
\end{defn}

 In the sequel, for every integer $j \geq 1$, $\Prhoj$ will denote the probability measure on $\T^j$ which is the
distribution of $j$ independent $\GW_\rho$ trees. The canonical element
of $\T^j$ is denoted by $\bf$. For $\bf=(\tau_1,\ldots,\tau_j)
\in \T^j$, let $|\bf|=|\tau_1|+\cdots+|\tau_j|$ be the total progeny of $\bf$.
\subsection{Coding Galton--Watson trees}

We now explain how trees can be coded by three different functions.
These codings are important in the understanding of large
Galton--Watson trees.

\begin{figure*}[h!]
 \begin{minipage}[c]{9cm}
   \centering
      \includegraphics[scale=0.4]{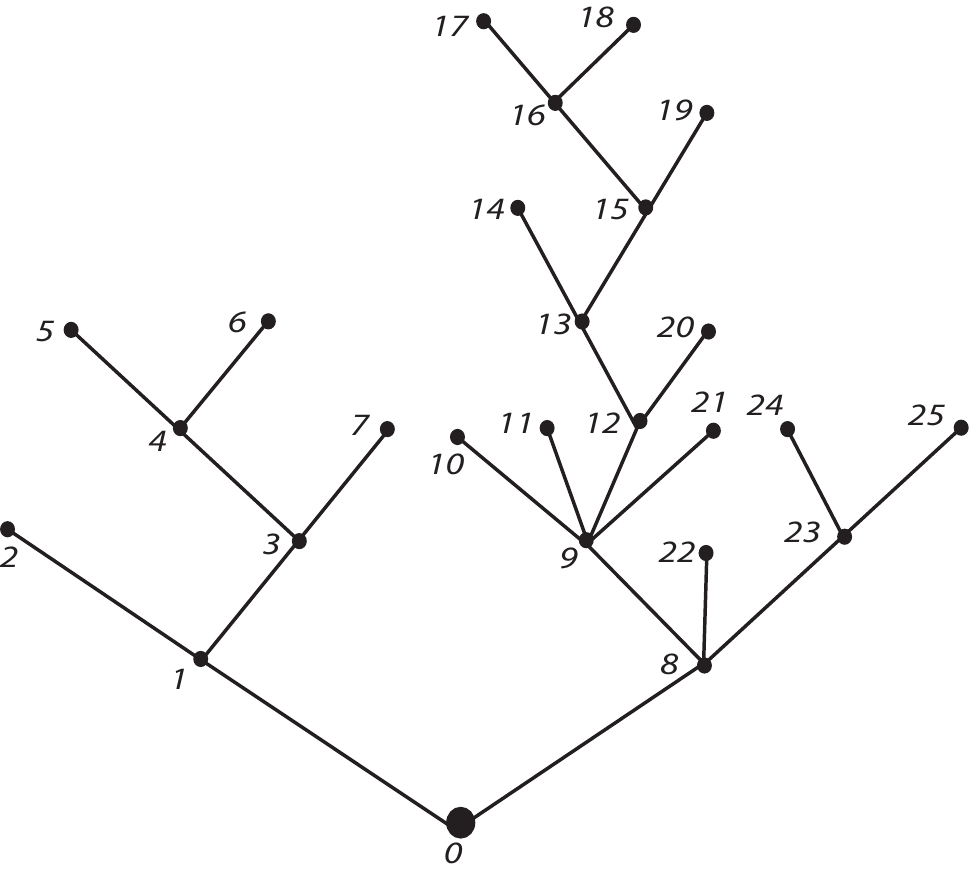}
   \end{minipage}
   \begin{minipage}[c]{9cm}
   \centering
      \includegraphics[scale=0.4]{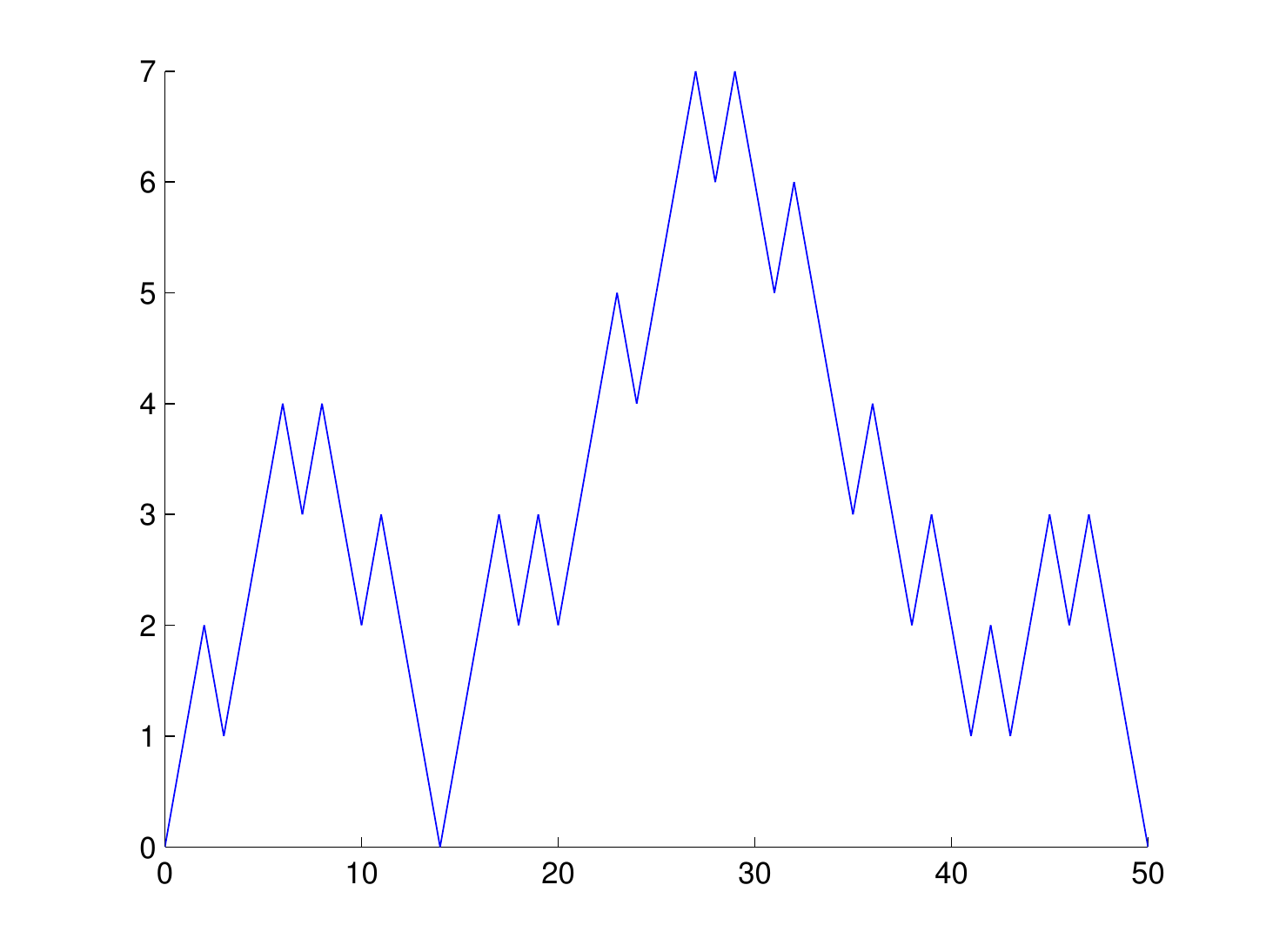}
   \end{minipage}
   \caption{\label{fig:tree1}A tree $\tau$ with its vertices indexed in
lexicographical order and its  contour function $(C_{u}(\tau);\, 0
\leq u \leq 2(\zt-1)$. Here, $|\tau| = 26$.}
   \begin{minipage}[c]{9cm}
   \centering
      \includegraphics[scale=0.4]{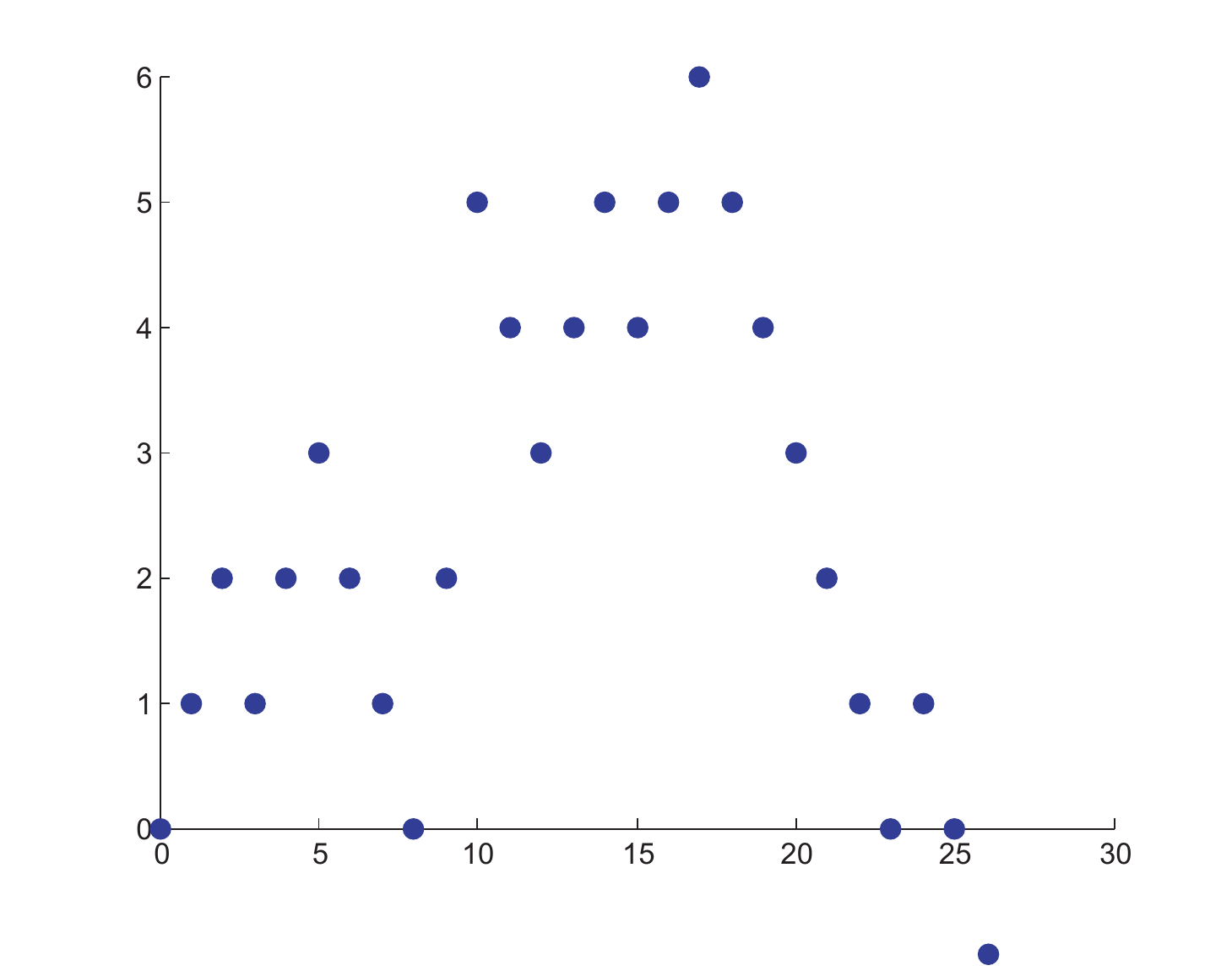}
   \end{minipage}
   \begin{minipage}[c]{9cm}
   \centering
      \includegraphics[scale=0.4]{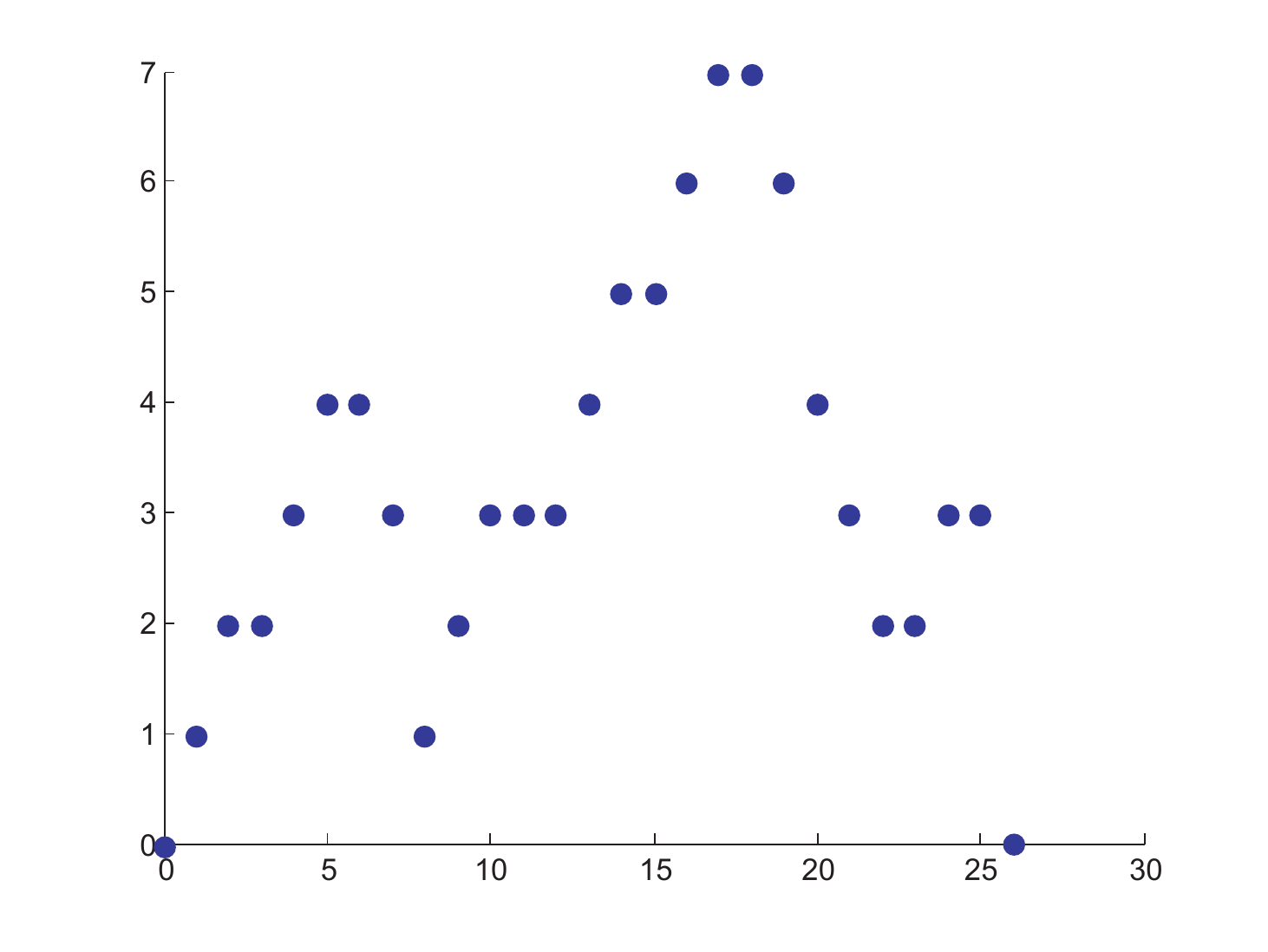}
   \end{minipage}
   \caption{\label{fig:tree} The Lukasiewicz path $ \W(\tau)$ and the height function
$H(\tau)$ of $\tau$.}
\end{figure*}

\begin{defn}\label{def:fonctions}
We write $u < v$ for the lexicographical order on the labels $U$ (for
example $\emptyset<1<21<22$). Let $\tau$ be a finite tree and order the
individuals of $\tau$ in lexicographical order:
$\emptyset=u(0)<u(1)<\cdots<u(|\tau|-1)$. The height process
$H(\tau)=(H_n(\tau), 0 \leq n \leq  |\tau|)$ is defined, for $0
\leq n < |\tau|$, by: $$H_n(\tau)=|u(n)|.$$ We set $H_{|\tau|}(\tau)=0$ for technical reasons. The height $ \H ( \tau)$ of $ \tau$ is by definition $ \max_ {0 \leq n <\zt} H_n( \tau)$.

Consider a particle that starts from the
root and visits continuously all the edges of $\tau$ at unit speed, assuming that every edge has unit length. When
the particle leaves a vertex, it moves toward the first non visited child of this
vertex if there is such a child, or returns to the parent of this
vertex. Since all the edges are crossed twice, the total time needed
to explore the tree is $2 (\zt-1)$. For $0 \leq t \leq
2(\zt-1)$, $C_\tau(t)$ is defined as the distance to the root of the
position of the particle at time $t$. For technical reasons, we set
$C_\tau(t)=0$ for $t \in [2(\zt-1), 2 \zt]$. The function $C(\tau)$
is called the contour function of the tree $\tau$. See Figure
\ref{fig:tree} for an example, and \cite[Section 2]{Du03} for a
rigorous definition.

Finally, the Lukasiewicz path $ \W(\tau)=( \W_n(\tau), 0 \leq n  \leq
|\tau|)$ of $\tau$ is defined by $ \W_0(\tau)=0$ and for
$0 \leq n \leq |\tau|-1$:
$$ \W_{n+1}(\tau)= \W_{n}(\tau)+k_{u(n)}(\tau)-1.$$
\end{defn}
Note that necessarily $ \W_{\zt}(\tau)=-1$ and  that $U( \t_n) =  \min\{ j \geq 0; \, \W_{j+1}(\t_n) - \W_ {j}(\t_n) =  \Delta( \t_{n})-1\}$, where we recall that $U( \t_n)$ is the index in the lexicographical order of the first vertex of $ \t_n$ with maximal out-degree. 
 
The following proposition explains the importance of the Lukasiewicz path. Let $ \rho$ be a critical or subcritical probability distribution  on $ \N$ with $ \rho(1)<1$.

\begin{prop}\label{prop:RW} Let $(W_n)_{n \geq 0}$ be a random walk with starting point $W_0=0$ and jump distribution $\nu(k)=\ \rho(k+1)$ for $k \geq -1$. Set
$\zeta =\inf\{n \geq 0; \, W_n=-1\}$. Then $(W_0,W_1,\ldots,W_{\zeta})$ has the same distribution as the Lukasiewicz path of a
$\GW_\rho$ tree. In particular, the total progeny of a $ \GW_ \rho$ tree has the same law as $\z$.\end{prop}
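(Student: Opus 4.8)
The plan is to realize both objects in the statement --- the Lukasiewicz path of a $\GW_\rho$ tree and the stopped walk $(W_0,\dots,W_\zeta)$ --- as images of one and the same deterministic encoding, and then to compare the two resulting probability measures atom by atom. Introduce the set $\mathcal{S}$ of \emph{admissible paths}: the finite integer sequences $\mathbf{x}=(x_0,\dots,x_N)$, $N\ge1$, with $x_0=0$, $x_{j+1}-x_j\ge-1$ and $x_j\ge0$ for $0\le j<N$, and $x_N=-1$. The first step is to check that $\tau\mapsto\W(\tau)$ is a bijection from $\T_{f}$ onto $\mathcal{S}$. That $\W(\tau)\in\mathcal{S}$ is essentially built into the definition ($\W_0=0$, each increment equals $k_{u(j)}(\tau)-1\ge-1$, and $\W_{|\tau|}(\tau)=-1$), apart from the nonnegativity $\W_n(\tau)\ge0$ for $n<|\tau|$, which I would obtain by noting that $\W_n(\tau)+1$ equals the number of vertices of $\tau$ that have been revealed but not yet visited after the first $n$ vertices (in lexicographic order) have been visited: this stays $\ge1$ while vertices remain and equals $0$ only at $n=|\tau|$. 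Conversely, given $\mathbf{x}\in\mathcal{S}$, running the same exploration while assigning $x_{j+1}-x_j+1$ children to the $j$-th visited vertex produces its unique preimage, admissibility ensuring that this builds a tree with exactly $N$ vertices. This encoding is classical; see e.g.\ \cite{LG05}.

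The second step compares the two laws on $\mathcal{S}$. On the tree side, unwinding the recursive definition of $\P_\rho$ by induction on $|\tau|$ via the branching property gives, for $\tau\in\T_{f}$ with vertices $u(0)<\dots<u(|\tau|-1)$ in lexicographic order,
$$\P_\rho(\{\tau\})=\prod_{i=0}^{|\tau|-1}\rho\big(k_{u(i)}(\tau)\big).$$
On the walk side, fix $\mathbf{x}=(x_0,\dots,x_N)\in\mathcal{S}$ with preimage $\tau$. Since $x_j\ge0$ for $j<N$ and $x_N=-1$, the event $\{W_j=x_j \text{ for } 0\le j\le N\}$ coincides with $\{(W_0,\dots,W_\zeta)=\mathbf{x}\}$, so by independence of the increments, $\nu(k)=\rho(k+1)$, and $x_{j+1}-x_j+1=k_{u(j)}(\tau)$,
\begin{align*}
\Pr{(W_0,\dots,W_\zeta)=\mathbf{x}}
&=\prod_{j=0}^{N-1}\nu(x_{j+1}-x_j)=\prod_{j=0}^{N-1}\rho(x_{j+1}-x_j+1)\\
&=\prod_{i=0}^{|\tau|-1}\rho\big(k_{u(i)}(\tau)\big)=\P_\rho(\{\tau\}).
\end{align*}
Thus the pushforward of $\P_\rho$ restricted to $\T_{f}$ under $\W$ and the law of $(W_0,\dots,W_\zeta)$ assign the same mass to every point of $\mathcal{S}$.

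It remains to check that these are genuine probability measures, i.e.\ that $\Pr{\zeta<\infty}=1$, equivalently that a $\GW_\rho$ tree is finite almost surely; this is the only step that requires a real input. The increment law $\nu$ has mean $m-1\le0$, where $m=\sum_{k\ge0}k\rho(k)$ is the mean of $\rho$: if $m<1$ the law of large numbers gives $W_n\to-\infty$ a.s., and if $m=1$ then $\nu$ is centered and, since $\rho(1)<1$, nondegenerate, so $W$ is recurrent by the Chung--Fuchs theorem; in either case $\zeta<\infty$ a.s.\ (alternatively, this is the classical extinction criterion for Galton--Watson processes). Hence $\sum_{\mathbf{x}\in\mathcal{S}}\Pr{(W_0,\dots,W_\zeta)=\mathbf{x}}=1$, which together with the previous step forces $\sum_{\tau\in\T_{f}}\P_\rho(\{\tau\})=1$ as well and the equality of the two laws on $\mathcal{S}$ --- the first assertion. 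The last assertion then follows at once, since under the encoding the total progeny $|\tau|$ is the index $N$ of the final point of $\W(\tau)$, which on the walk side equals $\zeta$. I expect the only genuine obstacle to be this finiteness point; the rest is routine bookkeeping with the deterministic bijection.
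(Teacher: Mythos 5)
Your proof is correct: the paper itself gives no argument here but simply cites \cite[Proposition 1.5]{LG05}, and your bijection between finite plane trees and admissible paths, together with the atom-by-atom product-formula comparison and the a.s.\ finiteness of $\zeta$ (equivalently, a.s.\ extinction in the critical and subcritical cases), is exactly the standard proof given in that reference.
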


\begin{proof}See \cite[Proposition 1.5]{LG05}.\end{proof}

We next extend the definition of the Lukasiewicz path to a forest. If $ \textbf{f}= ( \tau_{i})_{1 \leq i \leq  j}$ is a forest, set $n_0=0$ and $n_{p}= | \tau_{1}|+|\tau_{2}|+ \cdots +|\tau_{p}|$ for $1 \leq p  \leq j$. Then, for every $0 \leq i \leq p-1$ and $0 \leq k < |\tau_{i+1}|$, set
$$ \W_{n_{i}+k}( \textbf{f})= \W_{k}( \tau_{i+1})-i.$$
Note that $(\W_{n_{i}+k}( \textbf{f})+i ; 0 \leq k \leq  |\tau_{i+1}| )$ is the Lukasiewicz path of $ \tau_{i+1}$ and that $ \min  \{0 \leq i  \leq n_{j}; \,  \mathcal{W}_{i}( \mathbf{f})=-k\}=n_{k}$ for $ 1 \leq k \leq j$.

Finally, the following result will be useful.
\begin{prop}\label{prop:useful} Let $(W_n)_{ n \geq 0}$ be the random walk introduced in Proposition \ref {prop:RW} with $ \rho= \mu$. Then
\begin{enumerate}
 \item[(i)] $ \Pr { \forall i \geq 1, W_{i} \leq -1}=\gamma$.
 \item[(ii)] For every $i \geq 0$,  $\Pr {\forall m \leq i , W_m \geq 0} = \Prmu { \zt \geq i+1}$.
  \end{enumerate}
\end{prop}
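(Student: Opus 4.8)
The plan is to derive both identities from Proposition \ref{prop:RW}, which identifies the Lukasiewicz path of a $\GW_\mu$ tree with the random walk $(W_n)$ stopped at its first hitting time $\zeta$ of $-1$, together with a cycle-lemma / duality argument. For part (i), I would first observe that the event $\{\forall i \geq 1,\ W_i \leq -1\}$ is, by the Markov property after the first step, exactly the event that the walk started from $W_1 = k_\emptyset - 1$ never returns to $0$, i.e. stays $\leq -1$ forever after time $1$. The cleanest route is to note that, since $\mu$ is subcritical, the walk has negative drift $\mathfrak{m} - 1 = -\gamma < 0$, so $W_n \to -\infty$ a.s., and $-\inf_{n \geq 0} W_n$ is a.s. finite; the probability that the walk started at $0$ never hits $\{-1, -2, \dots\}$ strictly above its eventual path — more precisely $\Pr{\forall i \geq 1,\ W_i \leq -1} = \Pr{W \text{ hits } -1 \text{ at time } 1 \text{ and never comes back to } 0}$. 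Here the key computation is a standard renewal/ladder identity: the expected number of visits of the walk to level $0$ (counting time $0$) equals $1/\Pr{\forall i\geq 1, W_i \leq -1}$ on one hand, and on the other hand equals $\Es{\zeta}$ by decomposing the excursion of the walk below its running maximum — and $\Es{\zeta} = \Esmu{\zt} = 1/\gamma$ since the total progeny of a $\GW_\mu$ tree has mean $1/(1-\mathfrak{m}) = 1/\gamma$ (the walk returns to $0$ exactly once per completed excursion that comes back, and each such excursion is distributed as a $\GW_\mu$ tree's Lukasiewicz path). This gives $\Pr{\forall i \geq 1,\ W_i \leq -1} = 1/\Es{\zeta} = \gamma$.

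For part (ii), I would argue by a direct bijective/probabilistic identification. Fix $i \geq 0$. The event $\{\forall m \leq i,\ W_m \geq 0\}$ says the walk stays nonnegative up to time $i$, equivalently $\zeta > i$, equivalently the first excursion (the $\GW_\mu$ tree coded by $(W_0, \dots, W_\zeta)$) has not yet ended by step $i$, i.e. has at least $i+1$ vertices. By Proposition \ref{prop:RW}, $(W_0, \dots, W_\zeta)$ is distributed as the Lukasiewicz path of a $\GW_\mu$ tree $\tau$, and $\zeta = \zt$, so $\{\zeta > i\} = \{\zt \geq i+1\}$, whence $\Pr{\forall m \leq i,\ W_m \geq 0} = \Pr{\zeta \geq i+1} = \Prmu{\zt \geq i+1}$. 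One should be slightly careful that "$W_m \geq 0$ for all $m \leq i$" really coincides with "$\zeta > i$": since $W$ decreases by at most $1$ per step and $W_0 = 0$, the first time it is negative it equals $-1$, so $\{\zeta \leq i\} = \{\exists m \leq i : W_m < 0\} = \{\exists m \leq i : W_m = -1 \text{ (first such } m\text{)}\}$, which is the complement of $\{\forall m \leq i : W_m \geq 0\}$; this is exactly the content I would spell out.

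The main obstacle is part (i): making the renewal argument rigorous requires justifying that the ladder-height decomposition of $(W_n)$ has a clean correspondence with successive $\GW_\mu$ excursions, and that the expected excursion length (the expected return time to $0$, when the walk does return) equals $\Esmu{\zt} = 1/\gamma$. An alternative, perhaps more self-contained, approach is to use the strict-descending-ladder structure: since $\mu(k) = \nu(k-1)$ with $\nu$ having mean $-\gamma$, and the walk skips freely downward (jumps are $\geq -1$), the strict descending ladder epochs are exactly the times the walk reaches a new minimum, which it does at every down-step; this "skip-free to the left" property forces $\Pr{\forall i \geq 1,\ W_i \leq -1}$ to equal the probability that $W_1 \leq -1$ weighted appropriately, and a generating-function computation with $\varphi(s) = \Es{s^{W_1+1}} = \sum_k \mu(k) s^k$ shows the ruin probability from $0$ below $-1$ behaves so that the answer is $1 - (\text{smallest root of } \varphi(s) = s)$; since $\mu$ is subcritical, that smallest root is $1$ only in the critical case, and in general equals $\mathfrak{m}$ is \emph{not} what we want — rather one checks directly that the probability of never returning to the origin is $1 - \mathfrak{m}$ by conditioning on the value of the first jump and using the skip-free property, which reduces it to the total-progeny identity again. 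I would present the renewal/total-progeny version as the cleanest, citing Proposition \ref{prop:RW} for the excursion-tree correspondence and $\Esmu{\zt}=1/\gamma$ for the mean.
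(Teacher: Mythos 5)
Part (ii) of your proposal is correct and is exactly the paper's argument: $\{\forall m\le i,\ W_m\ge 0\}=\{\zeta\ge i+1\}$ and $\zeta\stackrel{(d)}{=}\zt$ by Proposition \ref{prop:RW}. For part (i), however, the paper simply invokes a classical theorem of Tak\'acs (Theorem 1 in Chapter 2 of \cite{Tak77}), which states precisely that for an integer-valued walk with steps $\ge -1$ and negative drift, $\Pr{\forall i\ge 1,\ W_i\le -1}=-\Es{W_1}=\gamma$. You attempt instead a renewal argument, and while the conclusion is reachable this way, the chain of identities you write has a genuine gap in its middle link.

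Concretely: your first identity, $\Es{\#\{n\ge 0:W_n=0\}}=1/\Pr{\forall i\ge 1, W_i\le -1}$, is true but only because, for a walk that is skip-free downward, the event ``never return to level $0$'' coincides with the event ``stay $\le -1$ forever'' (a walk that ever re-enters $[0,\infty)$ either lands on $0$ or overshoots and must then pass through $0$ on its way down); you need to say this, since for a general walk the geometric-number-of-visits parameter is the no-return probability, not $\Pr{\forall i\ge 1, W_i\le -1}$. Your second identity, $\Es{\#\{n\ge 0:W_n=0\}}=\Es{\zeta}$, is also true but the justification you give --- ``the walk returns to $0$ exactly once per completed excursion that comes back'' --- is false: the Lukasiewicz path of a single $\GW_\mu$ tree typically visits $0$ several times before first hitting $-1$ (it does so at every vertex that is terminal in the exploration stack), so visits to $0$ do not correspond one-to-one with excursions. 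A correct route to $\Es{\#\{n:W_n=0\}}=1/\gamma$ goes through the strong Markov property at the first-passage times $\zeta_k$ to $-k$ (so each level $-k$, $k\ge 0$, has the same expected occupation number) together with $\Es{\zeta_K}=K/\gamma$. A cleaner renewal argument altogether --- and one that reuses your part (ii) --- is to decompose over the last time $T$ at which $W$ attains its overall supremum: by the Markov property and time reversal, $\Pr{T=i}=\Pr{\forall m\le i,\ W_m\ge 0}\cdot\Pr{\forall j\ge 1,\ W_j\le -1}$, and summing over $i\ge 0$ gives $1=p\sum_{i\ge 0}\Prmu{\zt\ge i+1}=p\,\Esmu{\zt}=p/\gamma$; this is exactly the computation the paper carries out later in the proof of Theorem \ref{thm:cvd} (i). Finally, the closing paragraph of your proposal (the generating-function/ruin-probability detour) is inconclusive as written and should be dropped.
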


\begin{proof}
By \cite[Theorem 1 in Chapter 2]{Tak77}, we have $ \Pr { \forall i \geq 1, W_{i} \leq -1}=-\Es {W_{1}}= 1-\mathfrak {m}$. The second assertion is an immediate consequence of Proposition \ref {prop:RW}.
\end{proof}

\subsection{The Vervaat transformation}

For $\bx=(x_1, \ldots,x_n)
\in \Z^n$ and $i \in \Znz$, denote by $\bx^{(i)}$ the $i$-th
cyclic shift of $\bx$ defined by $x^{(i)}_k=x_{i+k \mod n}$ for $1
\leq k \leq n$.

\begin{defn}\label{def:vervaat}Let $n \geq 1$ be an integer and let $\bx=(x_1,\ldots,x_{n}) \in
\Z^n$. Set $w_j=x_1+\cdots + x_j$ for $1 \leq j \leq n$ and
let the integer $i_*(\bx)$ be defined by $i_*(\bx) = \inf \{ j \geq 1; w_j= \min_{1 \leq i \leq n} w_i \}$.
The Vervaat transform of $\bx$, denoted by $\V(\bx)$, is defined to
be $\bx^{(i_*(\bx))}$.
\end{defn}

The following fact is well known (see e.g. \cite[Section 5]{Pit06}):

\begin {prop}\label {prop:vervaat}Let $(W_n, n \geq 0)$ be as in Proposition \ref {prop:RW} and $X_k=W_k- W_ {k-1}$ for $k \geq 1$. Fix an integer $n \geq 1$ such that $ \Pr {W_n=-1}>0$. The law of $ \V(X_1, \ldots, X_n)$ under $\Pr{ \, \cdot \, | \, W_{n}=-1}$ coincides with the law of $(X_1, \ldots,X_n)$ under  $\Pr{ \, \cdot \, | \,  \z=n}$.
\end {prop}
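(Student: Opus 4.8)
The plan is to deduce this from the classical Dvoretzky--Motzkin cycle lemma, combined with the cyclic exchangeability of the i.i.d.\ increments $X_1, \ldots, X_n$.

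Call $\bx = (x_1, \ldots, x_n) \in \Z^n$ \emph{admissible} if $x_i \geq -1$ for every $i$, the partial sums $w_j = x_1 + \cdots + x_j$ satisfy $w_j \geq 0$ for $1 \leq j \leq n-1$, and $w_n = -1$; since the increments $X_i$ take values in $\{-1, 0, 1, \ldots\}$ one has $\{(X_1, \ldots, X_n) \text{ admissible}\} = \{\z = n\}$. The first, and main, step is the combinatorial fact: if $\bx \in \Z^n$ satisfies $x_i \geq -1$ for all $i$ and $w_n = -1$, then among its $n$ cyclic shifts $\bx^{(1)}, \ldots, \bx^{(n)}$ exactly one is admissible, namely $\V(\bx) = \bx^{(i_*(\bx))}$. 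To see that $\V(\bx)$ is admissible I would write out the partial sums of $\bx^{(i_*)}$: with $m := \min_{1 \leq j \leq n} w_j = w_{i_*} \le w_n = -1$, the $k$-th partial sum of $\bx^{(i_*)}$ equals $w_{i_*+k} - m \geq 0$ for $1 \leq k \leq n - i_*$, equals $(w_n - m) + w_{k-n+i_*} \geq 0$ for $n - i_* < k \leq n-1$ (here using that $i_*$ is the \emph{first} index attaining the minimum, so $w_j \geq m+1$ for $j < i_*$ by integrality), and equals $w_n = -1$ for $k = n$. Conversely, if some $\bx^{(i)}$ is admissible, the same bookkeeping (now using $w_n = -1$) forces $w_i \leq w_j$ for all $j$ and $w_i < w_j$ for $j < i$, i.e.\ $i = i_*(\bx)$, giving uniqueness.

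Granting this, fix a bounded measurable $F : \Z^n \to \R$ and set $B = \{W_n = -1\}$. Decomposing according to the value of $i_*(X_1, \ldots, X_n) \in \{1, \ldots, n\}$ on $B$, I would write
$$\Es{F(\V(X_1, \ldots, X_n)) \, \mathbf{1}_B} = \sum_{i=1}^n \Es{F\big((X_1, \ldots, X_n)^{(i)}\big) \, \mathbf{1}_{\{i_*(X_1, \ldots, X_n) = i\}} \, \mathbf{1}_B}.$$
By the cycle lemma, on $B$ the event $\{i_* = i\}$ coincides with $\{(X_1, \ldots, X_n)^{(i)} \text{ admissible}\}$, and since an admissible sequence automatically lies in $B$ while $B$ is invariant under cyclic shifts, the $i$-th summand equals $\Es{F((X_1,\ldots,X_n)^{(i)}) \, \mathbf{1}_{\{(X_1,\ldots,X_n)^{(i)} \text{ admissible}\}}}$. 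Because $X_1, \ldots, X_n$ are i.i.d., the law of $(X_1, \ldots, X_n)$ is invariant under every cyclic shift, so each summand in turn equals $\Es{F(X_1, \ldots, X_n) \, \mathbf{1}_{\{(X_1,\ldots,X_n)\text{ admissible}\}}}$, whence
$$\Es{F(\V(X_1, \ldots, X_n)) \, \mathbf{1}_B} = n \, \Es{F(X_1, \ldots, X_n) \, \mathbf{1}_{\{(X_1, \ldots, X_n) \text{ admissible}\}}}.$$
Taking $F \equiv 1$ gives $\Pr{W_n = -1} = n \, \Pr{(X_1, \ldots, X_n) \text{ admissible}}$, which is positive by the standing assumption $\Pr{W_n=-1}>0$; dividing the previous identity by it and recalling $\{(X_1, \ldots, X_n) \text{ admissible}\} = \{\z = n\}$ yields $\Es{F(\V(X_1, \ldots, X_n)) \mid W_n = -1} = \Es{F(X_1, \ldots, X_n) \mid \z = n}$, which is the claim.

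The only genuinely delicate point is the cycle lemma of the first step --- in particular checking that the Vervaat shift, and no other cyclic shift, produces an admissible path; once this is in hand, the remaining steps are a routine unwinding of the cyclic exchangeability of i.i.d.\ increments.
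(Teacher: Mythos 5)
Your proof is correct, and it is precisely the standard cycle-lemma argument (Dvoretzky--Motzkin plus cyclic exchangeability of the i.i.d.\ increments) that the paper does not reproduce but delegates to the cited reference \cite[Section 5]{Pit06}. The delicate points — that the Vervaat shift is the unique admissible cyclic shift, and that admissibility of a shift already forces the total sum to be $-1$ so the conditioning event can be dropped inside each summand — are both handled correctly.
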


From Proposition \ref {prop:RW}, it follows that the law of  $\V(X_1, \ldots, X_ n)$ under $\Pr{ \, \cdot \, | \, W_{n}=-1}$ coincides with the law of $(  \W_1 (\t_n), \W_2 (\t_n)-\W_1 (\t_n), \ldots, \W_n (\t_n)-\W_ {n-1} (\t_n))$ where $ \t_n$ is a $ \GW_ \rho$ tree conditioned on having total progeny equal to $n$.

We now introduce the Vervaat transformation in continuous time.

\begin{defn}Set $\D_0([0,1],\R)= \{ \omega \in \D([0,1],\R) ; \, \omega(0)=0\}$. The Vervaat transformation in continuous time, denoted by $\Vc: \D_0([0,1],\R) \rightarrow
\D([0,1],\R)$, is defined as follows. For $\omega \in
\D_0([0,1],\R)$, set $g_1(\omega)=\inf\{ t \in [0,1]; \omega(t-)
\wedge \omega(t)= \inf_{[0,1]} \omega\}$. Then define:
$$\Vc(\omega)(t)=\begin{cases} \omega(g_1(\omega)+t)-\inf_{[0,1]}
\omega, \qquad \qquad  \qquad \qquad \quad \, \, \, \, \, \textrm{if }
g_1(\omega)+t \leq 1,
\\
\omega(g_1(\omega)+t-1)+\omega(1)-\inf_{[0,1]} \omega \qquad \quad
\qquad \textrm{  if } g_1(\omega)+t \geq 1.
\end{cases}$$\end{defn}

By combining the Vervaat transformation with limit theorems under the conditional probability  distribution $\Pr{ \, \cdot \, | \, W_{n}=-1}$ and using Proposition \ref {prop:RW} we will obtain information about conditioned Galton--Watson trees. The advantage of dealing with $\Pr{ \, \cdot \, | \, W_{n}=-1}$
is to avoid any positivity constraint.

\subsection {Slowly varying functions}

 Recall that a measurable function $L: \R_+ \rightarrow \R_+$ is said to be slowly varying if $ L(x)>0$ for $x$ large enough and $\lim_{x \rightarrow \infty} {L}(tx)/{L}(x)=1$ for all $t>0$. Let $L: \R_+ \rightarrow \R_+$  be a slowly varying function. Without further notice, we will use the following standard facts:
\begin{enumerate}
 \item[(i)] The convergence $\lim_{x \rightarrow \infty} {L}(tx)/{L}(x)=1$ holds uniformly for $t$ in a compact subset of $ (0, \infty)$.
 \item[(ii)] Fix $ \epsilon>0$. There exists a constant $C>1$ such that $\frac{1}{C}x^{-\e}\leq{L(nx)}/{L(n)} \leq Cx^\e$ for every
integer $n$ sufficiently large and $x \geq 1$.  \end{enumerate}
These results are immediate consequences of the so-called representation theorem for slowly varying functions (see e.g. \cite[Theorem 1.3.1]{BGT89}).

\section {Limit theorems for conditioned non-generic Galton--Watson trees}

In the sequel,  $(W_n; n \geq 0)$ denotes the random walk introduced in Proposition \ref {prop:RW} with $ \rho= \mu$. Note that $ \Es {W_1}= -  \gamma<0$. Set $X_0=0$ and $ X_k = W_k - W_ {k-1}$ for $k \geq 1$. It will be convenient to work with centered random walks, so we also set $ \overline {W}_n= W_n+  \gamma n$ and $ \oX_n=X_n + \gamma$ for $n \geq 0$ so that $ \oW_n=\oX_1+ \cdots + \oX_n$. Obviously, $W_n= -1$ if and only if $ \oW_n= \gamma n -1$.

\subsection{Invariance principles for conditioned random walks}

In this section, our goal is to prove  Theorem \ref{thmintro:1}. We first introduce some notation. Denote by $T : \cup_ {n \geq 1}  \R^n  \rightarrow \cup_ {n \geq 1}  \R^n$  the operator that interchanges the last and the (first) maximal component of a finite sequence of real numbers:

$$ T(x_1, \ldots, x_n)_k =  \begin {cases} \d \max_ { 1 \leq i \leq  n} x_i & \textrm {if } k=n \\
x_n & \textrm {if } \d x_k> \max _ {1 \leq i < k} x_i \textrm { and } x_k= \max_ {k \leq  i \leq n} x_i\\
x_k & \textrm {otherwise.}
\end {cases}$$

Since $ \mu$ satisfies Assumption $(H_ \theta)$, we have $  \Pr { \oW_1 \in (x,x+1]} \sim \mathcal {L}(x)/x^ { 1+\theta}$ as $ x \to \infty$. Then, by \cite[Theorem 9.1]{DDS08}, we have:
 \begin{equation}
 \label{eq:estimee}\Pr { \oW_n \in (x,x+1]}   \quad\mathop{ \sim}_{n \rightarrow \infty} \quad  n \cdot \Pr { \oW_1 \in (x,x+1] },
 \end{equation}
 uniformly in $ x \geq  \epsilon n$ for every fixed $ \epsilon >0$. In other words, the distribution of $ \oW_1$ is $(0,1]$--subexponential, so that we can apply a recent result of Armendáriz \& Loulakis \cite{AL11} concerning conditioned random walks with subexponential jump distribution. In our particular case, this result can be stated as follows:

 \begin {thm}[Armendáriz \& Loulakis, Theorem 1 in \cite{AL11}] \label {thm:AL}For $ n \geq 1$ and $x>0$, let $ \mu_ {n,x}$ be the probability measure on $ \R^n$ which is the distribution of $ ( \oX_1, \ldots, \oX_n)$ under the conditional probability distribution $ \Pr { \, \cdot \, | \, \oW_n \in (x,x+1]}$.

 Then for every $ \epsilon>0$, we have:
 $$ \lim _ {n \rightarrow \infty} \sup_ {x \geq  \epsilon n} \sup_ {A \in \mathcal {B}( \R^ {n-1})} \left| \mu_ {n,x} \circ T^ {-1} \left[ A \times \R\right] - \mu^ { \otimes (n-1)} \left[A \right] \right|=0.$$
 \end {thm}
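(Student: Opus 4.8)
The plan is to establish this as a uniform, quantitative version of the single-big-jump principle. Heuristically, under $\{\oW_n\in(x,x+1]\}$ with $x\ge\epsilon n$, exactly one of the increments $\oX_i$ is forced to be macroscopic, of size $x+O(b_n)$, where $b_n$ is the natural scale of a sum of $n$ i.i.d.\ copies of $\oX_1$ (so $b_n=o(n)$, hence $b_n=o(x)$ uniformly in $x\ge\epsilon n$), while the remaining $n-1$ increments behave as if unconditioned; the operator $T$ merely relocates this macroscopic coordinate to the last slot, after which the first $n-1$ coordinates should be within $o(1)$ of $\mu^{\otimes(n-1)}$ in total variation, uniformly in $x$. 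The whole point — and the source of the difficulty — is this uniformity over $x\ge\epsilon n$, together with the need to carry the slowly varying function $\mathcal L$ through every estimate; the pure power-law case $\mathcal L\equiv c$ is the right guide. One starts from the already available local large-deviation asymptotics
$$\Pr{\oW_n\in(x,x+1]}\ \sim\ n\,\Pr{\oW_1\in(x,x+1]}\qquad\text{uniformly in }x\ge\epsilon n,$$
which fixes the correct normalisation.

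The main step would be a one-big-jump lemma. Fix $a_n$ with $b_n\ll a_n\ll\epsilon n$ and let $G_n$ be the event that exactly one index $M$ satisfies $\oX_M\ge x-a_n$ and that $\bigl|\sum_{k\ne M}\oX_k\bigr|\le a_n$. The target is
$$\Pr{\oW_n\in(x,x+1],\ G_n^{\,c}}\ =\ o\!\left(n\,\Pr{\oW_1\in(x,x+1]}\right)\qquad\text{uniformly in }x\ge\epsilon n.$$
This breaks into two pieces. First, the contribution of configurations with two coordinates $\ge\delta x$ is bounded, by a union bound over the $\binom n2$ pairs, by conditioning on those two coordinates and controlling the sum of the remaining $n-2$ increments through a concentration-function estimate $\sup_z\Pr{\oW_m\in(z,z+1]}=O(1/b_m)$ (Kolmogorov--Rogozin for $\theta\ge2$, the local limit theorem in the $\theta$-stable domain of attraction for $\theta\in(1,2)$); since $\Pr{\oX_1\ge\delta x}\asymp\mathcal L(x)/x^{\theta}$ and $x\ge\epsilon n$, the ratio of the resulting bound to $n\,\Pr{\oW_1\in(x,x+1]}$ is of order $n^{\,2-1/(2\wedge\theta)-\theta}$ up to slowly varying factors, and $2-1/(2\wedge\theta)-\theta<0$ for every $\theta>1$. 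Second, the event $\{\oW_n>x,\ \max_i\oX_i\le\delta x\}$ is a large-deviation event for the sum of the truncated variables $\oX_i\mathbf 1_{\oX_i\le\delta x}$; a Fuk--Nagaev inequality bounds it by $\bigl(C\,n\,\E[\oX_1^2\mathbf 1_{\oX_1\le\delta x}]/(\delta x^2)\bigr)^{1/\delta}$, which, using $x\ge\epsilon n$ together with the truncated-second-moment asymptotics, is $o(n^{-\theta}\mathcal L(n))$ once $\delta$ is small enough depending on $\theta$, hence negligible against the normalisation. On the complement of these two events there is a unique coordinate $\ge\delta x$; upgrading ``$\ge\delta x$'' to ``$\ge x-a_n$'' and controlling $\bigl|\sum_{k\ne M}\oX_k\bigr|$ rely on the same two families of estimates, now applied to the $n-1$ remaining increments.

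The second step would show that, on $G_n$, the relabelled vector is asymptotically i.i.d.\ in its first $n-1$ coordinates. Because $T$ moves the first maximal coordinate to position $n$, on $G_n$ the macroscopic coordinate of $T(\oX_1,\ldots,\oX_n)$ is (after relabelling) $\oX_n$, and for $A\in\mathcal B(\R^{n-1})$, writing $s=y_1+\cdots+y_{n-1}$,
\begin{align*}
&\Pr{(\oX_1,\ldots,\oX_{n-1})\in A,\ \oX_n\ge x-a_n,\ \oW_n\in(x,x+1]}\\
&\qquad=\ \int_A\Pr{\oW_1\in(x-s,\,x+1-s]}\,\mathbf 1_{|s|\le a_n}\,\mu^{\otimes(n-1)}(d\mathbf{y})\ +\ R_n,
\end{align*}
where $R_n$ collects the range $|s|\in(a_n,x/2]$ and is $o\!\left(n\,\Pr{\oW_1\in(x,x+1]}\right)$ by the one-big-jump lemma. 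On $\{|s|\le a_n\}$, the uniform convergence theorems for regularly varying functions give $\Pr{\oW_1\in(x-s,x+1-s]}=\Pr{\oW_1\in(x,x+1]}\,(1+o(1))$ uniformly in $x\ge\epsilon n$ and $|s|\le a_n$, since $a_n/x\to0$ uniformly; moreover $\mu^{\otimes(n-1)}(|s|\le a_n)=1-o(1)$. Hence the conditional law of $(\oX_1,\ldots,\oX_{n-1})$ given $G_n$ has a density with respect to $\mu^{\otimes(n-1)}$ equal to $1+o(1)$ on a set of asymptotically full measure, so it is within $o(1)$ of $\mu^{\otimes(n-1)}$ in total variation, uniformly in $x\ge\epsilon n$.

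Combining the two steps, for every $A\in\mathcal B(\R^{n-1})$,
$$\left|\mu_{n,x}\circ T^{-1}[A\times\R]-\mu^{\otimes(n-1)}[A]\right|\ \le\ \Pr{G_n^{\,c}\ \big|\ \oW_n\in(x,x+1]}+o(1),$$
and both terms tend to $0$ uniformly in $x\ge\epsilon n$ and in $A$, which is the claim. The main obstacle is the one-big-jump lemma, and within it the two-macroscopic-jumps and no-macroscopic-jump estimates: the delicate part is tracking the slowly varying corrections precisely enough that all bounds stay uniform in $x\ge\epsilon n$, which is exactly the difference between a fixed target value and a window whose location scales with $n$.
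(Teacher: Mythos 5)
This statement is not proved in the paper at all: it is Theorem~1 of Armend\'ariz and Loulakis \cite{AL11}, imported as a black box. The only thing the paper does on its own is verify that the hypothesis of that theorem is satisfied here, namely that the law of $\oW_1$ is $(0,1]$--subexponential, which follows from Assumption $(H_\theta)$ via the uniform estimate \eqref{eq:estimee} quoted from \cite{DDS08}. So there is no internal proof to compare yours against, and writing one from scratch is a genuinely harder task than anything the paper itself undertakes.

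That said, your sketch is a faithful reconstruction of the single-big-jump argument that underlies the Armend\'ariz--Loulakis theorem. The two exceptional events you isolate (two macroscopic jumps, handled by a union bound over pairs together with a concentration-function bound $\sup_z\Pr{\oW_m\in(z,z+1]}=O(1/b_m)$; no macroscopic jump, handled by Fuk--Nagaev) and the final density comparison $\Pr{\oW_1\in(x-s,x+1-s]}=(1+o(1))\Pr{\oW_1\in(x,x+1]}$ uniformly for $|s|\le a_n$ and $x\ge\epsilon n$ are exactly the right ingredients, and your exponent counts are correct for every $\theta>1$ (the quantity $2-\theta-1/(2\wedge\theta)$ is indeed negative on all of $(1,\infty)$, degenerating only as $\theta\downarrow 1$). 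One place where the sketch is looser than it should be: in your second step you effectively treat only the event where the maximal coordinate already sits at position $n$, whereas on $\{M=j\}$ with $j<n$ the first $n-1$ coordinates of $T(\oX_1,\ldots,\oX_n)$ are $(\oX_1,\ldots,\oX_{j-1},\oX_n,\oX_{j+1},\ldots,\oX_{n-1})$. You need to sum over $j$, invoke exchangeability, and check that the additional constraint defining $\{M=j\}$ on $G_n$ (all coordinates other than $j$ lie below $x-a_n$) has $\mu^{\otimes(n-1)}$-probability $1-o(1)$ uniformly; this is routine but it is precisely where the operator $T$ enters, so it should not be elided. Finally, note that $\mu$ in the displayed conclusion denotes the law of the centred increment $\oX_1$ rather than the offspring distribution --- a notational clash inherited from \cite{AL11} that your sketch resolves in the intended way.
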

\noindent As explained in \cite{AL11}, this means that under $ \Pr { \, \cdot \, | \, \oW_n \in (x,x+1]}$,  asymptotically one gets $n-1$ independent random variables after forgetting the largest jump.

\bigskip

The proof of Theorem \ref {thmintro:1} is based on the following invariance principle concerning a conditioned random walk with negative drift, which is a simple consequence of Theorem \ref{thm:AL}.

\begin{prop}
\label{prop:invGW}Let $R$ be a uniformly distributed random variable on $[0,1]$. Then the following convergence holds in $ \D([0,1], \R)$:
\begin{equation} \label {eq:invGW}\left( \left. \frac{W_ { \fl {nt}}}{n}, 0 \leq  t \leq 1 \, \right| \, W_{n}=-1\right) \qquad \mathop { \longrightarrow} ^ {(d)}_ { n \to \infty} \qquad (- \gamma  t+ \gamma \mathbbm{1}_ { R \leq t}, 0 \leq t \leq 1).\end{equation}
\end{prop}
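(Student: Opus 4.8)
The plan is to deduce the functional convergence in \eqref{eq:invGW} from the Armend\'ariz--Loulakis result (Theorem \ref{thm:AL}) together with the Vervaat-type reduction to the ``free'' conditioning $\Pr{\,\cdot\,|\,\oW_n\in(x,x+1]}$ with $x=\gamma n-1$, which is asymptotically of the required form $x\geq\e n$. The heuristic is that under this conditioning the walk $(\oX_1,\dots,\oX_n)$, once the single largest jump is removed, looks like $n-1$ i.i.d.\ samples from the centered law $\mu$ of $\oX_1$; the largest jump must then be roughly $x\approx\gamma n$ to make up the deficit, and it sits at a uniformly random position among the $n$ coordinates. Translating back to $W=\oW-\gamma\cdot$, the path $t\mapsto W_{\fl{nt}}/n$ is, up to an $o(1)$ error coming from the fluctuations of the $n-1$ ``small'' increments (which are $O(\sqrt n\,)$ or $O(n^{1/\theta})$, hence $o(n)$ after division by $n$), equal to a deterministic linear drift $-\gamma t$ plus a single upward jump of height $\gamma$ located at the rescaled position of the big increment.

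First I would make the translation between the two conditionings precise: by Proposition \ref{prop:RW}, $(W_0,\dots,W_n)$ under $\Pr{\,\cdot\,|\,W_n=-1}$ is, after the Vervaat transform (Proposition \ref{prop:vervaat}), exactly what we want, but it is cleaner to work directly with $W_n=-1$, i.e.\ $\oW_n=\gamma n-1$. I would apply Theorem \ref{thm:AL} with the interval $(x,x+1]$ chosen so that $\gamma n-1\in(x,x+1]$ (for instance $x=\gamma n-1$, noting $\gamma n - 1 \geq \e n$ for large $n$ when $\e<\gamma$). Theorem \ref{thm:AL} then says that, after applying the operator $T$ which moves the largest increment to the last coordinate, the first $n-1$ coordinates converge in total variation to i.i.d.\ copies of $\oX_1$ (uniformly over $n$ and over the choice of $x$ in the allowed range). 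Let $J=J_n\in\{1,\dots,n\}$ be the (a.s.\ unique, eventually) index of the maximal increment $\oX_J$. A symmetry/exchangeability argument — or a direct consequence of the structure in \cite{AL11} — gives that $J_n/n$ converges in distribution to $R$, uniform on $[0,1]$, and is asymptotically independent of the reshuffled small increments. Since $\oX_1+\cdots+\oX_n=\gamma n-1$ and the $n-1$ small increments sum to $O(n^{1/(2\wedge\theta)})$ with high probability (by the classical CLT / stable limit theorem for i.i.d.\ sums, the relevant domain-of-attraction statement following from $(H_\theta)$), we get $\oX_{J_n}=\gamma n-1-\sum_{i\neq J_n}\oX_i = \gamma n + o(n)$ in probability. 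Hence $X_{J_n}=\oX_{J_n}-\gamma = \gamma n + o(n)$, so the big jump of the original (non-centered) walk $W$ has size $\gamma n(1+o(1))$.

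Next I would assemble the path. Write $W_{\fl{nt}} = -\gamma\fl{nt} + \oW_{\fl{nt}} = -\gamma\fl{nt} + \sum_{i\le \fl{nt}}\oX_i$. Split this sum into the contribution of the big increment, present iff $J_n\le\fl{nt}$, which equals $\oX_{J_n}\mathbbm{1}_{J_n\le \fl{nt}} = (\gamma n + o(n))\,\mathbbm{1}_{J_n\le \fl{nt}}$, and the contribution of the small increments, which is a partial sum of (asymptotically) $\fl{nt}-1$ i.i.d.\ centered variables, hence $O(n^{1/(2\wedge\theta)})=o(n)$ uniformly in $t$ with high probability by Doob/Kolmogorov maximal inequalities. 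Dividing by $n$ gives
$$\frac{W_{\fl{nt}}}{n} = -\gamma\,\frac{\fl{nt}}{n} + \gamma\,\mathbbm{1}_{J_n/n\le t} + o(1)$$
uniformly in $t$, where $J_n/n\Rightarrow R$. To turn this into convergence in $\D([0,1],\R)$ for the Skorokhod $J_1$ topology one observes that the limiting path $t\mapsto -\gamma t + \gamma\mathbbm{1}_{R\le t}$ has a single jump, and a path with a single jump of fixed size is approximated in $J_1$ by the prelimit as soon as the jump location converges and the rest converges uniformly; this is a standard ``convergence of a step plus a uniformly small perturbation'' argument (e.g.\ via Skorokhod representation, realizing $J_n/n\to R$ almost surely and the $o(1)$ error $\to0$ almost surely along a subsequence, then using that these suffice for $J_1$ convergence of the sum). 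Care is needed only at the boundary event $\{R=0\}$ or $\{R=1\}$, which has probability zero, so it does not affect the distributional limit.

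The main obstacle I anticipate is the rigorous handling of the position $J_n$ of the maximal jump: Theorem \ref{thm:AL} as quoted controls only the law of the $n-1$ coordinates \emph{other} than the maximal one (after applying $T$), and does not by itself assert that the index of the maximum is asymptotically uniform and independent of the rest. One must extract this — presumably it is either immediate from the exchangeability of the $n$ increments under $\Pr{\,\cdot\,|\,\oW_n\in(x,x+1]}$ (the conditioning is symmetric in the coordinates, so conditionally on the \emph{value} of the largest jump and on the multiset of the others, its position is uniform), or it requires a short separate argument combining exchangeability with the a.s.\ uniqueness of the argmax in the limit. The secondary technical point, less serious, is to confirm that $(H_\theta)$ places $\oX_1$ in the domain of attraction of a stable law of index $2\wedge\theta$ so that $\sum_{i\le k}\oX_i = O(n^{1/(2\wedge\theta)})$ uniformly in $k\le n$ in probability; this is classical given (ii) of $(H_\theta)$ and the fact that $\oX_1$ is centered with the prescribed tail, and only $o(n)$ is actually needed here, which is much weaker and follows from the weak law of large numbers for triangular arrays.
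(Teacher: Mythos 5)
Your proposal is correct and follows essentially the same route as the paper: reduce to the conditioning $\oW_n=\gamma n-1$, apply the Armend\'ariz--Loulakis theorem to control the walk with the maximal increment removed (the functional law of large numbers giving that its rescaled path is $o(1)$ uniformly), deduce $\oX_{J_n}/n\to\gamma$ from the sum constraint, and obtain the uniform limit of $J_n/n$ from the exchangeability of the increments under the conditioning together with the asymptotic uniqueness of the maximal component. The "main obstacle" you flag is resolved exactly as you suggest, and as the paper does.
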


\begin {proof}
By the definition of $ \oW$, it is sufficient to check that the following convergence holds in $ \D([0,1], \R)$:
\begin{equation}
\label{eq:cvthm1}
\left( \left. \frac{ \overline{W}_ { \fl {nt}}}{n}, 0 \leq  t \leq 1  \, \right| \, \overline{W}_{n}= \gamma n-1\right) \qquad \mathop { \longrightarrow} ^ {(d)}_ { n \to \infty} \qquad ( \gamma \mathbbm{1}_ { R \leq t}, 0 \leq t \leq 1),
\end{equation}
where $R$ is a uniformly distributed random variable on $[0,1]$. Denote by $V_n$ the coordinate of the first maximal component of $( \oX_1, \ldots, \oX_n)$.  Set $ \widetilde {W}_0=0$ and for $1 \leq i \leq n-1$ set:
$$  \widetilde {W}_i =   \begin {cases} \oX_1+ \oX_2 + \cdots + \oX_i  &  \textrm {if } i< V_n\\
\oX_1+ \oX_2 + \cdots + \oX_ {V_n-1}+ \oX_ {V_n+1}+ \cdots + \oX_ {i+1}   &\textrm {otherwise.}\end {cases}$$
By Theorem \ref {thm:AL}, for every $ \epsilon>0$:
 \begin{equation}
 \label{eq:zero} \lim _ {n \rightarrow \infty} \left| \Pr { \left. \forall t \in[0,1], \, \left| \frac{\widetilde {W}_ { \fl {(n-1) t}}}{n-1}\right| \leq  \epsilon \right| \oW_n = \gamma n -1 } -  \Pr { \forall t \in[0,1], \, \left| \frac{\oW_ { \fl {(n-1) t}}}{n-1}\right| \leq  \epsilon }\right|=0.
 \end{equation}
 Next, by the functional strong law of large numbers,
\begin{equation}
\label{eq:vers0}\Pr { \forall t \in[0,1], \, \left| \frac{\oW_ { \fl {(n-1) t}}}{n-1}\right| \leq  \epsilon }  \quad\mathop{\longrightarrow}_{n \rightarrow \infty} \quad 1.
\end{equation}
Combining \eqref{eq:vers0} with \eqref{eq:zero}, we get the following convergence in $ \D([0,1], \R)$:
\begin{equation}
\label{eq:zero2} \left( \left. \frac{\widetilde {W}_ { \fl {(n-1) t}}}{n-1}; 0 \leq t \leq 1 \, \right| \oW_n = \gamma n -1 \right)  \quad\mathop{\longrightarrow}^ {( \P)}_{n \rightarrow \infty} \quad \mathbf{0},
\end{equation}
where $ \mathbf{0}$ stands for the constant function equal to $0$ on $[0,1]$. In addition, note that on the event $  \{\oW_n = \gamma n -1\}$, we have $ \oX_ {V_n}= \gamma n -1 - \widetilde {W}_ { \fl {n -1}}$. The following joint convergence in distribution thus holds in $ \D([0,1], \R) \times \R$:
\begin{equation}
\label{eq:zero3} \left( \left. \left( \frac{\widetilde {W}_ { \fl {(n-1) t}}}{n-1}; 0 \leq t \leq 1 \right),  \frac{ \oX_ {V_n}}{ n} \, \right| \oW_n = \gamma n -1 \right)  \quad\mathop{\longrightarrow}^ {( \P)}_{n \rightarrow \infty} \quad (\mathbf{0}, \gamma).
\end{equation}
Standard properties of the Skorokhod topology then show that the following convergence holds in $ \D([0,1], \R)$:
\begin{equation}
\label{eq:zero4} \left( \left. \frac{ { \oW}_ { \fl {nt}}}{n} - \frac{ \oX_ {V_n}}{ n} \mathbbm {1}_ {  \{t \geq  \frac{V_n}{n}\}}; 0 \leq t \leq 1 \, \right| \oW_n = \gamma n -1 \right)  \quad\mathop{\longrightarrow}^ {( \P)}_{n \rightarrow \infty} \quad \mathbf{0}.
\end{equation}
Next, note that the convergence  \eqref{eq:zero3} implies that under $ \Pr { \, \cdot \, | \, \oW_n = \gamma n -1}$,  $( \oX_1, \ldots, \oX_n)$ has a unique maximal component with probability tending to one as $n \rightarrow \infty$. Since the distribution of $( \oX_1, \ldots, \oX_n)$ under $ \Pr { \, \cdot \, | \, \oW_n = \gamma n -1}$ is cyclically exchangeable, one easily gets that the law of $V_n/n$ under $ \Pr { \, \cdot \, | \, \oW_n = \gamma n -1}$ converges to the uniform distribution on $[0,1]$. Also from \eqref{eq:zero3} we know that $ \oX_ {V_n}/n$ under $ \Pr { \, \cdot \, | \, \oW_n = \gamma n -1}$ converges in probability to $ \gamma$. It follows that
\begin{equation}
\label{eq:zero5}  \left( \left. \frac{ \oX_ {V_n}}{ n} \mathbbm {1}_ {  \{  \frac{V_n}{n} \leq t\}}, 0 \leq t \leq 1 \right| \,  \oW_n = \gamma n -1 \right)  \quad\mathop{\longrightarrow}^ {(d)}_{n \rightarrow \infty} \quad  ( \gamma \mathbbm{1}_ { R \leq t}, 0 \leq t \leq 1),
\end{equation}
where $R$ is  uniformly distributed over $[0,1]$. Since \eqref{eq:zero4} holds in probability, we can combine \eqref{eq:zero4} and \eqref{eq:zero5} to get \eqref{eq:cvthm1}. This completes the proof. 
\end{proof}

Before proving Theorem \ref {thmintro:1}, we need to introduce some notation.  For $\bx=(x_1, \ldots,x_n)
\in \Z^n$, set $ \mathcal {M}( \bx)= \max_ {1 \leq i \leq n} x_i$. Recall the notation $\V(\bx)$ for the Vervaat transform of $\bx$. Note that  $ \mathcal {M} ( \bx)= \mathcal {M} ( \V(\bx))$. Let $F: \R \rightarrow \R$ be a bounded continuous function. Recall that $ \Delta( \t_n)$ denotes the maximal out-degree of a vertex of $ \t_n$. Since the maximal jump of $ \W( \t_n)$ is equal to $ \Delta( \t_n)-1$, it follows from the remark following Proposition \ref {prop:vervaat} that:
\begin{eqnarray}
\Es{ F( \Delta( \t_n))} &=& \Es{ F \left(  \mathcal {M} \left(\V(X_1,X_2, \ldots, X_n\right) \right)+1) \, | \, W_n=-1} \notag \\
&=&  \Es{ F \left(  \mathcal {M} \left(X_1,X_2, \ldots, X_n\right) +1\right) \, | \, W_n=-1} \label{eq:eg}
\end{eqnarray}

Recall that since $ \mu$ satisfies Assumption $ (H_ \theta)$, $  \oW_1$ belongs to the domain of attraction of a spectrally positive strictly stable law of index $2 \wedge \theta$. Hence there exists a slowly varying function $L$ such that $ \oW_n/ \left(L(n) n^ { 1/ ( 2 \wedge \theta)} \right)$ converges in distribution toward $Y_1$. We set $ B_n=L(n) n^ { 1/ ( 2 \wedge \theta)}$ and prove that Theorem \ref {thmintro:1} holds with this choice of $B_n$. The function $L$ is not unique, but if $ \widetilde {L}$ is another slowly function with the same property we have $L(n)/ \widetilde {L}(n) \rightarrow 1$ as $n \rightarrow \infty$. So our results do not depend on the choice of $L$. Note that when $ \mu$ has finite variance $ \sigma^2$, one may take $B_n= \sigma \sqrt {n/2}$, and when $ \mathcal {L}=c+o(1)$ one may choose $L$ to be a constant function.

We are now ready to prove Theorem \ref {thmintro:1}.

\begin{proof}[Proof of Theorem \ref {thmintro:1}] If $ Z \in \D([0,1], \R)$, denote by $ \overline { \Delta}(Z)= \sup_{0<s<1}(Z_{s}-Z_{s-})$ the largest jump of $Z$. Since $ \overline{\Delta} : \D([0,1], \R) \rightarrow \R$ is continuous, from Proposition  \ref{prop:invGW} we get that, under the conditioned probability measure $ \Pr{ \, \cdot \, | W_{n}=-1}$, $  \mathcal {M} \left(X_1,X_2, \ldots, X_n\right)/n$ converges in probability toward $ \gamma$ as $n \rightarrow \infty$. Assertion (i) immediately follows from \eqref{eq:eg}.

For the second assertion, keeping the notation of the proof of Proposition \ref {prop:invGW}, we get by Theorem \ref{thm:AL} that for every bounded continuous function $F : \D([0,1], \R) \rightarrow \R$
$$\lim _ {n \rightarrow \infty} \left| \Es { \left. F \left( \frac{\widetilde {W}_ { \fl {(n-1)t}}}{B_{n}}; 0 \leq t \leq 1\right)   \right| \oW_n = \gamma n -1 } -  \Es {  F(Y_{t}, 0 \leq t \leq 1)  }\right|=0.$$
Since the jumps of $ \left(\widetilde {W}_ { \fl {(n-1)t}}; 0 \leq t \leq 1\right)$ have the same distribution as the out-degrees, minus one, of all the vertices of $ \t_n$, except $ u_{ \star}( \t_{n})$, and by continuity of the map $ Z \mapsto \overline { \Delta}(Z)$  on $ \D([0,1], \R)$, it follows that 
$$ \frac{D_{n}}{B_{n}}  \quad\mathop{\longrightarrow}^{(d)}_{n \rightarrow \infty} \quad \overline { \Delta}(Y_t, 0 \leq t \leq 1).$$
If $ \theta \geq 2$, $Y$ is continuous and $\overline { \Delta}(Y_t, 0 \leq t \leq 1)=0$. If $ \theta<2$, the result easily follows from the fact that the Lévy measure of $Y$ is $ \nu(dx)= \mathbbm {1}_ {  \{x >0 \}}dx/( \Gamma(- \theta)x^ {1+ \theta})$.

For (iii), if $V_n$ is as in the proof of Proposition \ref {prop:invGW}, note that we have $$\mathcal{M}( \oX_1, \oX_2, \ldots, \oX_n)= \oX_ {V_n}= \gamma n -1 - \sum_ {i \neq V_n} \oX_i= \gamma n -1 - \widetilde {W}_{n-1}$$ on the event $  \left\{ \oW_n= \gamma n  -1 \right\}$. As noted in \cite[Formula (2.7)]{AL11}, it follows from \eqref{eq:zero2} that
$$ \left( \left. \frac{\mathcal{M}( \oX_1, \oX_2, \ldots, \oX_n) - \gamma n }{B_n} \, \right | \, \oW_n=  \gamma n- 1 \right) \quad\mathop{\longrightarrow} ^ {(d)}_{n \rightarrow \infty} \quad -Y_1.$$
Since $\mathcal{M}( \oX_1, \oX_2, \ldots, \oX_n)=\mathcal{M}( X_1, X_2, \ldots, X_n) + \gamma$, we thus get that $$ \left( \left. \frac{ \mathcal {M} \left(X_1,X_2, \ldots, X_n\right) +1 - \gamma n }{B_n }  \, \right | \, W_n= - 1 \right) \quad\mathop{\longrightarrow} ^ {(d)}_{n \rightarrow \infty} \quad -Y_1.$$
Assertion (iii) then immediately follows from \eqref{eq:eg}. This completes the proof.
\end{proof}

\begin {rem} \label{rem:gen}The preceding proof shows that assertion (i) in Theorem \ref {thmintro:1} remain true when $\mu$ is subcritical and both \eqref{eq:estimee} and Theorem \ref{thm:AL} hold. These conditions are more general than those of Assumption $(H_ \theta)$: see e.g.  \cite[Section 9]{DDS08} for examples of probability distributions that do not satisfy Assumption $(H_ \theta)$ but such that \eqref{eq:estimee} holds. Note that assertion (ii) in Theorem \ref {thmintro:1}  relies on the fact that $ \mu$ belongs to the domain of attraction of a stable law. Note also that there exist subcritical probability distributions such that none of the assertions of Theorem \ref {thmintro:1}  hold (see \cite[Example 19.37]{Jan12} for an example).
\end {rem}

By applying the Vervaat transformation in continuous time to the convergence of Proposition \ref{prop:invGW}, standard properties of the Skorokhod topology imply the following invariance principle for the Lukasiewicz path coding $ \t_{n}$ (we leave details to the reader since we will not need this result later). See Fig.~\ref{fig:luka} for a simulation.

 \begin{figure*}[h!]
\begin{center}
\includegraphics[scale=0.5]{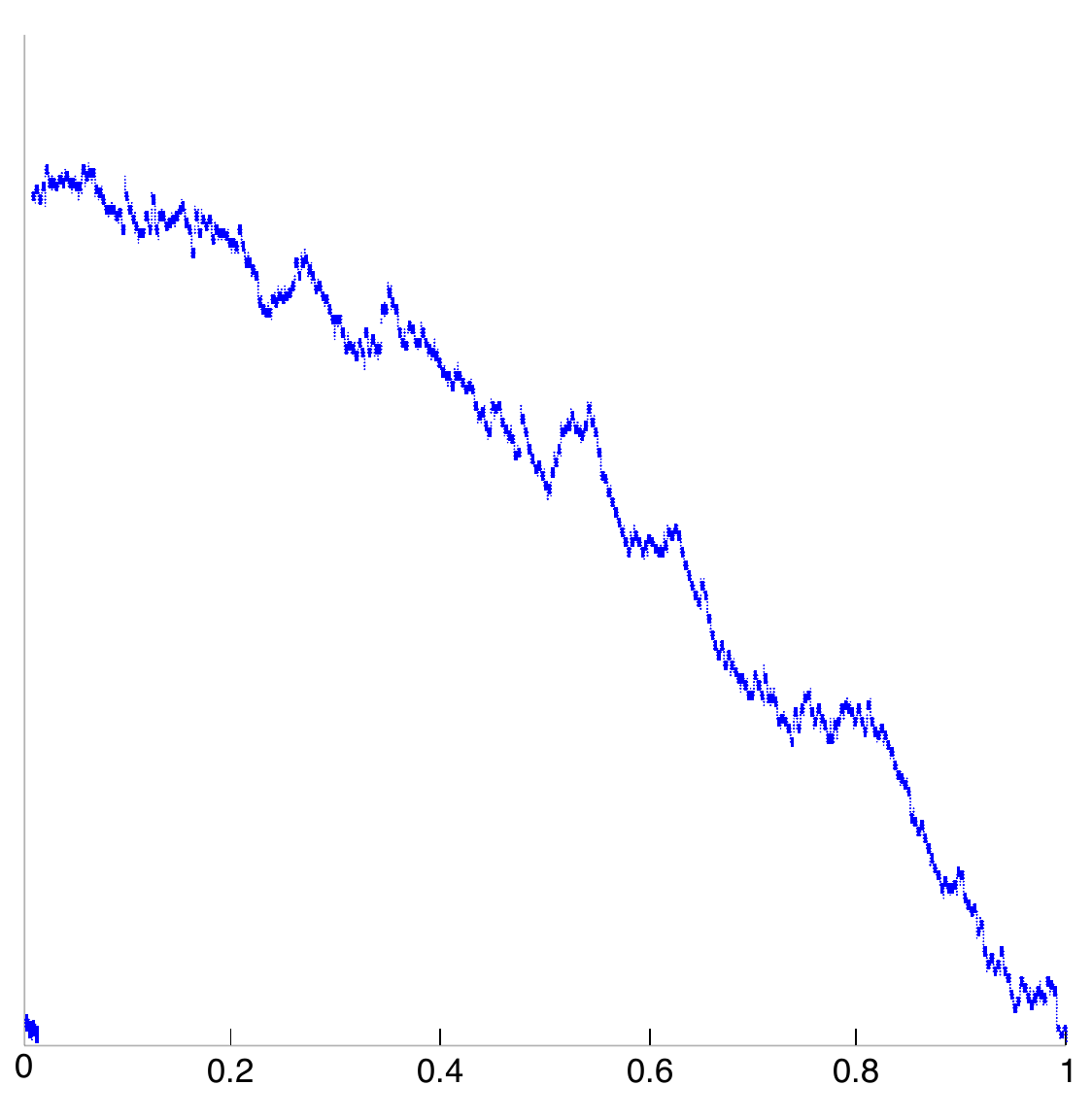}
\caption{\label{fig:luka}A simulation of a Lukasiewicz path of a large non-generic tree.}
\end{center}
\end{figure*}

\begin {prop} \label {prop:luka}The following assertions hold.
\begin{enumerate}
 \item[(i)] We have:
 $$ \sup_ { 0 \leq i \leq U( \t_n)} \frac{ \W_i ( \t_n)}{n}  \quad\mathop{\longrightarrow}^ {( \P)}_{n \rightarrow \infty} \quad 0.$$
 \item[(ii)] The following convergence holds in distribution in $ \D([ 0,1] , \R)$: $$ \d \left( \frac{\W_ { \fl {nt} \vee (U ( \t_n)+1)}( \t_n) }{n}, 0 \leq t \leq 1 \right) \quad \mathop { \longrightarrow}^ {(d)}_ {n \to \infty} \quad \left( \gamma (1-t), 0 \leq t \leq 1 \right).$$
 \end{enumerate}
 \end {prop}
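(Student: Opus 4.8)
The plan is to transport the convergence of Proposition~\ref{prop:invGW} through the Vervaat transformation, as announced just above. By the remark following Proposition~\ref{prop:vervaat}, the sequence $(\W_0(\t_n),\W_1(\t_n),\ldots,\W_n(\t_n))$ has the same law as the sequence of partial sums of $\V(X_1,\ldots,X_n)$ under $\Pr{\,\cdot\,|\,W_n=-1}$. Let $i_\star$ be the index of the running minimum of $(W_k)_{0\le k\le n}$ over $\{1,\ldots,n\}$ and let $V_n$ be the index of the largest increment of $(W_k)$; both are almost surely unique for $n$ large on the event considered, by Proposition~\ref{prop:invGW} and Theorem~\ref{thmintro:1}. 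Unwinding the cyclic shift defining $\V$ then yields the identity in distribution, jointly in $0\le m\le n$,
\[
\W_m(\t_n)=\begin{cases} W_{i_\star+m}-W_{i_\star}, & 0\le m\le n-i_\star,\\[1mm] W_{m-(n-i_\star)}+W_n-W_{i_\star}, & n-i_\star\le m\le n,\end{cases}
\]
together with $U(\t_n)=V_n-i_\star-1$ on the event $\{V_n>i_\star\}$.

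Next I would read off the asymptotics of these ingredients from Proposition~\ref{prop:invGW}. The limit path $\omega(t)=-\gamma t+\gamma\,\mathbbm{1}_{\{R\le t\}}$, with $R$ uniform on $[0,1]$, has a single jump, of size $\gamma$, at time $R$, and attains its infimum $-\gamma R$ (as a left limit) uniquely at time $R$. Standard properties of the Skorokhod $J_1$-topology (localization of the macroscopic jump, continuity of the argmin) therefore give the joint convergences in probability $i_\star/n\to R$, $V_n/n\to R$, $W_{i_\star}/n\to -\gamma R$, $W_{V_n-1}/n\to -\gamma R$ and $W_{V_n}/n\to \gamma(1-R)$. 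Moreover $\Pr{V_n>i_\star}\to1$: the increment at $V_n$ is positive, so $W_{V_n}>W_{V_n-1}\ge W_{i_\star}$, whence $i_\star\ne V_n$; and if $V_n<i_\star$ the walk would have to descend from $W_{V_n}\approx\gamma(1-R)n$ down to its minimum $\approx -\gamma Rn$ using steps $\ge-1$, forcing $i_\star-V_n\gtrsim\gamma n$ and contradicting $i_\star/n,V_n/n\to R$. Since $i_\star/n$ and $V_n/n$ converge to the same limit, $V_n-i_\star=o(n)$ and hence $U(\t_n)/n\to0$ in probability.

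For assertion (ii), I would feed these limits into the displayed identity. Fix $t\in(0,1)$; then $\fl{nt}\vee(U(\t_n)+1)=\fl{nt}$ with probability tending to $1$, and according to whether $\fl{nt}\le n-i_\star$ or $\fl{nt}\ge n-i_\star$ the identity gives $\W_{\fl{nt}}(\t_n)/n\to \omega(R+t)+\gamma R$ or $\to\omega(t-1+R)+\gamma R$ (using $W_n/n=-1/n\to0$), and in both cases, since $R+t\in(R,1)$ and $t-1+R\in(0,R)$, this limit equals $\gamma(1-R-t)+\gamma R=\gamma(1-t)$, respectively $-\gamma(t-1+R)+\gamma R=\gamma(1-t)$. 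At the endpoints the truncation is what makes the statement work: at $t=0$ the process equals $\W_{U(\t_n)+1}(\t_n)/n=(W_{V_n}-W_{i_\star})/n\to\gamma$, and at $t=1$ it equals $\W_n(\t_n)/n=-1/n\to0$, matching $\gamma(1-t)$ at both points. Without the ``$\vee(U(\t_n)+1)$'' the process $\W_{\fl{nt}}(\t_n)/n$ would carry a macroscopic jump of size $\approx\gamma$ at the random time $U(\t_n)/n\to0$, which prevents convergence in $\D([0,1],\R)$; the truncation removes precisely this jump. Complementing the finite-dimensional convergences with tightness — obtained by applying the continuous Vervaat transform $\Vc$ to Proposition~\ref{prop:invGW} — gives (ii), the limit being the deterministic continuous function $t\mapsto\gamma(1-t)$.

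For assertion (i), I would use the first line of the identity, legitimate on $\{V_n>i_\star\}$ since then $U(\t_n)=V_n-i_\star-1\le n-i_\star$: one obtains $\sup_{0\le i\le U(\t_n)}\W_i(\t_n)=\sup_{i_\star\le k\le V_n-1}W_k-W_{i_\star}$. For $i_\star\le k\le V_n-1$, writing $W_k-W_{i_\star}=(W_{V_n-1}-W_{i_\star})-\sum_{j=k+1}^{V_n-1}X_j$ and using $X_j\ge-1$ gives $W_k-W_{i_\star}\le (W_{V_n-1}-W_{i_\star})+(V_n-1-i_\star)$, hence $\sup_{0\le i\le U(\t_n)}\W_i(\t_n)\le (W_{V_n-1}-W_{i_\star})+U(\t_n)$; since $(W_{V_n-1}-W_{i_\star})/n\to0$ and $U(\t_n)/n\to0$ in probability, (i) follows. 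The step I expect to require the most care is the claim that the unique macroscopic jump of the walk occurs immediately after its running minimum — that is, $\Pr{V_n>i_\star}\to1$ with $V_n-i_\star=o(n)$ — since it is exactly what forces $\W(\t_n)$ to look, past its big initial jump, like a straight descent of slope $-\gamma$, and what makes the truncation in (ii) behave correctly; the remaining steps are routine bookkeeping with the Skorokhod topology.
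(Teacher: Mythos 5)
Your argument is a correct and detailed elaboration of exactly the route the paper has in mind: the paper gives no proof of this proposition, only the one-sentence indication that one should push Proposition \ref{prop:invGW} through the Vervaat transformation, and your discrete Vervaat identity, the joint localizations $i_\star/n\to R$, $V_n/n\to R$, $W_{i_\star}/n\to-\gamma R$, $W_{V_n}/n\to\gamma(1-R)$, the step-size argument showing $V_n>i_\star$ with $V_n-i_\star=o(n)$, the finite-dimensional computation for (ii), and the proof of (i) are all sound.

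The one step that does not work as written is the parenthetical claim that tightness is ``obtained by applying the continuous Vervaat transform $\Vc$ to Proposition \ref{prop:invGW}''. The process $\Vc$ applied to the rescaled conditioned walk starts at the value $0$ attained at the running minimum and therefore still carries a jump of size $\approx\gamma$ at time $(U(\t_n)+1)/n\to 0$; by the very mechanism you invoke to explain the necessity of the truncation, such a sequence is \emph{not} tight in $\D([0,1],\R)$ (for large $n$ every partition interval $[0,t_1)$ with $t_1>\delta$ contains this jump, so the modulus $w'(\delta)$ stays bounded below by $\gamma-o(1)$). So tightness cannot be borrowed from $\Vc$. The repair is routine and uses only ingredients you already have: since the limit $\gamma(1-t)$ is deterministic and continuous, it suffices to prove uniform convergence in probability, and your identity reduces this to three pieces — the initial plateau on $[0,(U(\t_n)+1)/n]$, where the process is constant equal to $(W_{V_n}-W_{i_\star})/n\to\gamma$; the range $\fl{nt}\in[U(\t_n)+1,\,n-i_\star]$, which is a time-shift of the conditioned walk over $[V_n/n,1]$; and the range $\fl{nt}\ge n-i_\star$, a time-shift over $[0,i_\star/n]$. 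On the latter two ranges the jump time of the limit path is excluded, so the $J_1$ convergence of Proposition \ref{prop:invGW} upgrades to uniform convergence there, giving the required uniform bound. With that replacement the proof is complete.
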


\noindent   Property (i) shows that $\left( \W_ { \fl {nt}}( \t_n) /{n}, 0 \leq t \leq 1 \right)$ does not converge in distribution in $ \D([ 0,1] , \R)$ toward $\left( \gamma (1-t), 0 \leq t \leq 1 \right)$ and this explains why we look at the Lukasiewicz path only after time $U( \t_n)$ in (ii).

\subsection{Description of the Lukasiewicz path after removing the vertex of maximal degree}
\label{sec:descr}

Recall that $U( \tau)$ is the index in the lexicographical order of the first vertex of $ \tau$ with maximal out-degree. 
We first define a modified version $ \widetilde{ \W}( \tau)$ of the Lukasiewicz path as follows. Set $n=|\tau|$, and for $1 \leq i \leq n-U( \tau)-1$, set $ \widetilde{ \mathcal{X}}_{i}( \tau)= \W_{U( \tau)+i+1}( \tau)-\W_{U( \tau)+i}( \tau)$ and  for $n-U( \tau) \leq i \leq n-1$ set 
$\widetilde{ \mathcal{X}}_{i}( \tau)= \W_{i+1-(n-U( \tau))}( \tau)-\W_{i-(n-U( \tau))}( \tau)$.  In other words, $\widetilde{ \mathcal{X}}_{1}( \tau), \ldots, \widetilde{ \mathcal{X}}_{n-1}( \tau)$ are the increments of $ \W(\t_n)$, shifted cyclically to start just after the maximum jump (which is not included).
Then set $\widetilde{ \W}_{i}( \tau)= \widetilde{ \mathcal{X}}_{1}( \tau)+\widetilde{ \mathcal{X}}_{2}( \tau)+ \cdots +\widetilde{ \mathcal{X}}_{i}( \tau)$  for $0 \leq i \leq n-1$ (see Fig.~\ref{fig:Wtilde} for an example). Note that $\Delta(\tau)=-\widetilde{ \mathcal{W}}_{n-1}(\tau)$. Finally, set
$$I( \tau)= \min  \{i  \in   \{0,1,\ldots,n-1\}; \quad  \widetilde{ \W}_{i}( \tau)=  \min_{0 \leq j \leq n-1}\widetilde{ \W}_{j}( \tau) \}.$$
\begin{figure*}[h]
\begin{center}
      \includegraphics[scale=0.5]{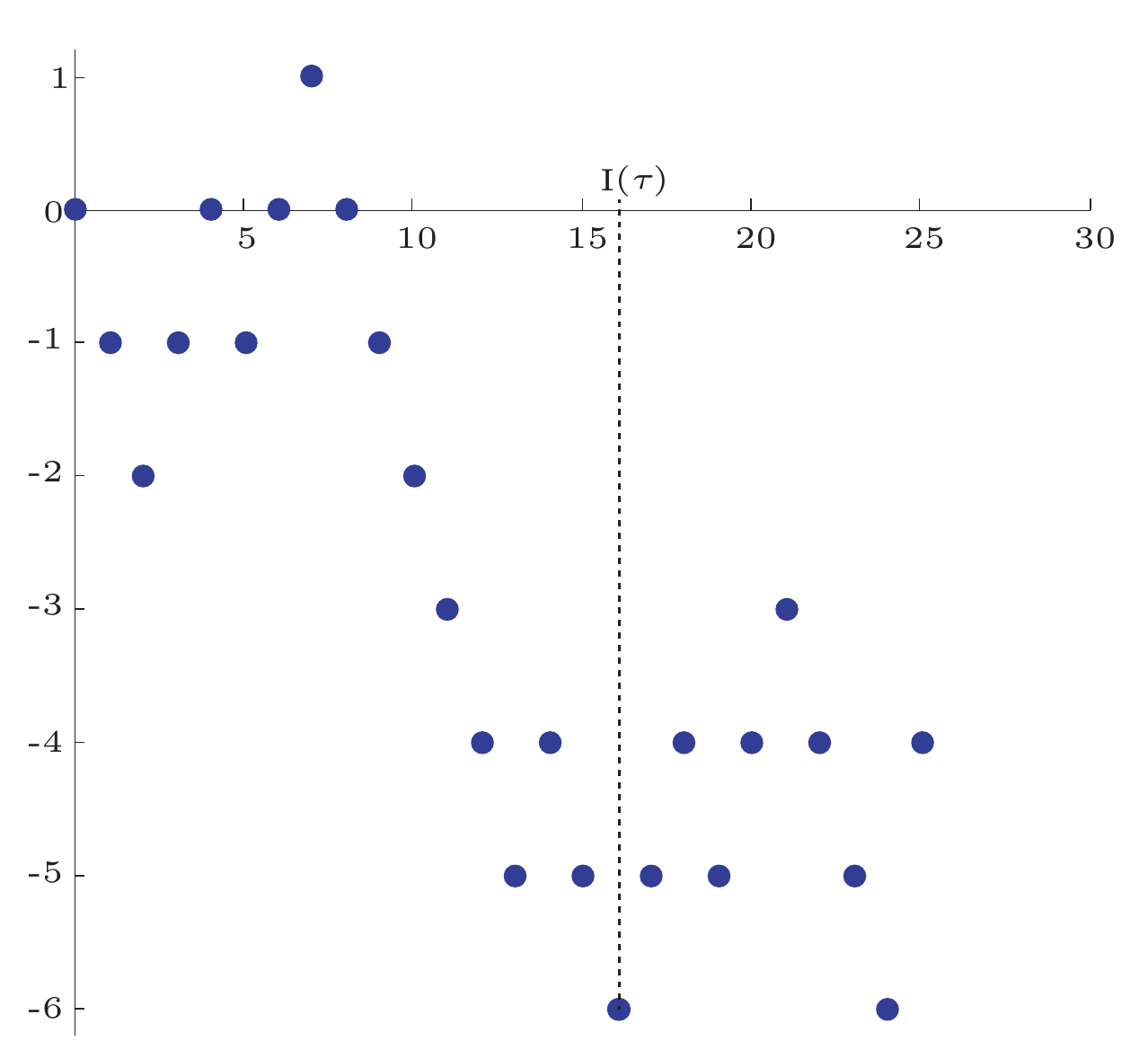}
\caption{\label{fig:Wtilde} The modified Lukasiewicz path $\widetilde{ \W}( \tau)$ of the tree $ \tau$ appearing in Fig.~\ref{fig:tree1}. Here $n=26$, $\Delta(\tau)=4$, $I(\tau)=16$ and $U( \tau)=9$.}
\end{center}
\end{figure*}

We now introduce some notation. For every $1 \leq i \leq \Delta( \tau)$, let $\mathcal{T}_{i}( \tau)$ be the tree of descendants of the $i$-th child of $ u_{ \star}( \tau)$. Set also $\mathcal{F}_{i,j}(\tau)=(\mathcal{T}_{i}( \tau), \ldots, \mathcal{T}_{j}( \tau))$. Finally, for $1 \leq k \leq  \Delta(\tau)$, let $ \widetilde{\z}_{k}(\tau)= \inf  \{i \geq 0; \widetilde{ \mathcal{W}}_{i}(\tau)=-k\}$. 
The following result explains the reason why we introduce $ \widetilde{ \W}( \tau)$. 

\begin {prop}\label {prop:links}The following assertions hold.
\begin{enumerate}
 \item[(i)] We have $U( \tau)=n-1-I( \tau)$.
  \item[(ii)] For $1 \leq k \leq  \Delta( \tau)$, $(\widetilde{ \mathcal{W}}_{0},\widetilde{ \mathcal{W}}_{1}, \ldots,\widetilde{ \mathcal{W}}_{ \widetilde{\z}_{k}(\tau)})$ is the Lukasiewicz path of the forest $ \mathcal{F}_{1,k}( \tau)$.
  \item[(iii)]The vectors $( \W_{0}( \tau), \W_{1}( \tau), \ldots, \W_{U( \tau)}( \tau))$ and
  $$(\widetilde{ \W}_{ I( \tau)}( \tau)-\widetilde{ \W}_{ I( \tau)}( \tau),\widetilde{ \W}_{ I( \tau)+1}( \tau)-\widetilde{ \W}_{ I( \tau)}( \tau), \ldots, \widetilde{ \W}_{ n-1}( \tau)-\widetilde{ \W}_{ I( \tau)}( \tau))$$
  are equal.
   \end{enumerate}
\end {prop}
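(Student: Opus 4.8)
The plan is to express the modified path $\widetilde{\W}(\tau)$ explicitly in terms of $\W(\tau)$ and of the Lukasiewicz paths of the subtrees hanging from $u_{\star}(\tau)$; the three assertions then follow quickly from these formulas. Throughout, write $U = U(\tau)$, $\Delta = \Delta(\tau)$, $n = |\tau|$ and $a = \W_{U}(\tau)$. By definition of $U$ we have $\W_{U+1}(\tau) = a + \Delta - 1$, and recall the elementary facts that $\W_{n}(\tau) = -1$ while $\W_{j}(\tau) \geq 0$ for $0 \leq j \leq n-1$. Telescoping the two blocks in the definition of $\widetilde{\mathcal{X}}(\tau)$, I will first record the identities
$$\widetilde{\W}_{i}(\tau) = \W_{U+1+i}(\tau) - (a + \Delta - 1) \qquad (0 \leq i \leq n - U - 1)$$
and
$$\widetilde{\W}_{\,(n-U-1)+p}(\tau) = \W_{p}(\tau) - a - \Delta \qquad (0 \leq p \leq U);$$
both give $\widetilde{\W}_{n-U-1}(\tau) = -a - \Delta$, and the second gives $\widetilde{\W}_{n-1}(\tau) = -\Delta$, as it must since $\Delta(\tau) = -\widetilde{\W}_{n-1}(\tau)$.

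Assertion (i) follows at once: by the first identity and $\W_{j}(\tau) \geq 0$ for $j \leq n-1$ we have $\widetilde{\W}_{i}(\tau) \geq -a - \Delta + 1$ for $0 \leq i \leq n - U - 2$, while by the second identity $\widetilde{\W}_{(n-U-1)+p}(\tau) \geq -a - \Delta$ for every $p$; since $\widetilde{\W}_{n-U-1}(\tau) = -a-\Delta$, the minimum of $\widetilde{\W}(\tau)$ equals $-a-\Delta$ and is attained for the first time precisely at index $n - U - 1$, so that $I(\tau) = n - 1 - U$. Assertion (iii) is then immediate from the second identity together with (i): since $\widetilde{\W}_{I(\tau)}(\tau) = \widetilde{\W}_{n-U-1}(\tau) = -a-\Delta$, subtracting gives $\widetilde{\W}_{I(\tau)+p}(\tau) - \widetilde{\W}_{I(\tau)}(\tau) = \W_{p}(\tau)$ for $0 \leq p \leq U$, which is exactly the asserted equality of the two $(U+1)$-tuples.

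For assertion (ii), the point is to identify the first block of $\widetilde{\W}(\tau)$ with the Lukasiewicz path of the forest $(\mathcal{T}_{1}(\tau), \ldots, \mathcal{T}_{\Delta}(\tau))$. The main combinatorial input is the classical fact that if $v = u(l_{0})$ is a vertex of $\tau$ whose subtree has size $s$, then $u(l_{0}), \ldots, u(l_{0}+s-1)$ are exactly the vertices of that subtree listed in its own lexicographical order, each having the same number of children in $\tau$ as in the subtree. Since the children $c_{1}, \ldots, c_{\Delta}$ of $u_{\star}(\tau)$ together with their descendants are explored consecutively starting right after $u_{\star}(\tau)$, applying this fact to $c_{1}, c_{2}, \ldots$ in turn and substituting into the first identity above yields, with $n_{i} = |\mathcal{T}_{1}(\tau)| + \cdots + |\mathcal{T}_{i}(\tau)|$,
$$\widetilde{\W}_{n_{i}+l}(\tau) = \W_{l}(\mathcal{T}_{i+1}(\tau)) - i \qquad (0 \leq i \leq \Delta - 1, \ 0 \leq l \leq |\mathcal{T}_{i+1}(\tau)|),$$
which is precisely the defining relation of the Lukasiewicz path of the forest $(\mathcal{T}_{1}(\tau), \ldots, \mathcal{T}_{\Delta}(\tau))$. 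The property of forest Lukasiewicz paths recalled before Proposition \ref{prop:useful} then gives $\widetilde{\z}_{k}(\tau) = n_{k}$ for $1 \leq k \leq \Delta$, and since $(\widetilde{\W}_{0}(\tau), \ldots, \widetilde{\W}_{n_{k}}(\tau))$ involves only $\mathcal{T}_{1}(\tau), \ldots, \mathcal{T}_{k}(\tau)$ it coincides with the Lukasiewicz path of $\mathcal{F}_{1,k}(\tau)$, as claimed. The only genuinely delicate step in the whole argument is this last combinatorial identification — matching the cyclically shifted increments of $\W(\tau)$ with the concatenated subtree paths; everything else is telescoping and the sign constraint on $\W(\tau)$.
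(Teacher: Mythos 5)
Your proof is correct. Note that the paper deliberately gives no formal argument for this proposition (it states that the claims "should be clear" from the picture relating $\mathcal{W}$ and $\widetilde{\mathcal{W}}$), so your write-up supplies exactly the intended verification: the two telescoped identities $\widetilde{\W}_{i}(\tau)=\W_{U+1+i}(\tau)-\W_{U+1}(\tau)$ and $\widetilde{\W}_{(n-U-1)+p}(\tau)=\W_{p}(\tau)-a-\Delta$, combined with the constraints $\W_{j}(\tau)\geq 0$ for $j\leq n-1$ and $\W_{n}(\tau)=-1$, give (i) and (iii), and the standard subtree-exploration property of Lukasiewicz paths identifies the initial block with the forest path for (ii). All steps check out, including the identification $\widetilde{\z}_{k}(\tau)=n_{k}$ and the fact that the first block of $\widetilde{\W}$ need only contain the forest path of $\mathcal{F}_{1,\Delta(\tau)}(\tau)$ as an initial segment.
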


This should be clear from the relation between $ \mathcal{W}$ and $\widetilde{ \W}$, see Fig.~\ref{fig:Wtilde}), and a formal proof would not be enlightning. We now prove that the random variables $ \widetilde{ \mathcal{X}}_{i}( \t_{n})$ are asymptotically independent.

\begin{prop} \label{prop:tilde}We have:
$$ \sup_{A \in \mathcal{B}( \R^{n-1})} \left| \Pr{(\widetilde{ \mathcal{X}}_{1}( \t_{n}), \ldots, \widetilde{ \mathcal{X}}_{n-1}) \in A } - \Pr{(X_{1}, \ldots, X_{n-1}) \in A}  \right| \quad\mathop{\longrightarrow}_{n \rightarrow \infty} \quad 0.$$
\end{prop}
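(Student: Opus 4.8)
The plan is to reduce the statement to Theorem \ref{thm:AL} via the Vervaat transform and the operator $T$. First I would use the remark following Proposition \ref{prop:vervaat}: the increments $(\W_1(\t_n)-\W_0(\t_n),\ldots,\W_n(\t_n)-\W_{n-1}(\t_n))$ have the same law as $\V(X_1,\ldots,X_n)$ under $\Pr{\,\cdot\,|\,W_n=-1}$, equivalently as $\V(\oX_1-\gamma,\ldots,\oX_n-\gamma)$ under $\Pr{\,\cdot\,|\,\oW_n=\gamma n-1}$. The modified path $\widetilde{\W}(\t_n)$ is obtained by taking these increments and cyclically shifting them to start just after the maximal jump. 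Because the Vervaat transform is itself a cyclic shift, composing it with ``shift to just after the maximal increment'' is again a cyclic shift of $(\oX_1,\ldots,\oX_n)$, and since the maximal component is unaffected by cyclic shifts, the sequence $(\widetilde{\mathcal{X}}_1(\t_n),\ldots,\widetilde{\mathcal{X}}_{n-1}(\t_n))$ is exactly $(\oX_1,\ldots,\oX_n)$ with its (first) maximal component deleted and then relabeled — that is, precisely the first $n-1$ coordinates of $T(\oX_1,\ldots,\oX_n)$ — all under $\Pr{\,\cdot\,|\,\oW_n=\gamma n-1}$, plus the global translation by $-\gamma$ converting $\oX$ to $X$.

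Next I would invoke Theorem \ref{thm:AL} with $x$ of order $\gamma n$. Strictly speaking that theorem is stated for the block conditioning $\oW_n\in(x,x+1]$ with a $\sup$ over $x\ge\epsilon n$, whereas here we condition on the single event $\oW_n=\gamma n-1$; since $(\oW_1)$ is $\Z$-valued, the event $\{\oW_n\in(\gamma n-2,\gamma n-1]\}$ is exactly $\{\oW_n=\gamma n-1\}$, so taking $x=\gamma n-2\ge\epsilon n$ (for $\epsilon<\gamma$ and $n$ large) the theorem applies verbatim. It yields that the total variation distance between the law of $T(\oX_1,\ldots,\oX_n)$ restricted to its first $n-1$ coordinates under $\Pr{\,\cdot\,|\,\oW_n=\gamma n-1}$ and the product measure $\mu^{\otimes(n-1)}$ (i.e.\ $n-1$ i.i.d.\ copies of $\oX_1$) tends to $0$. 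Translating every coordinate by $-\gamma$ is a bijection that preserves total variation, so the law of $(\widetilde{\mathcal{X}}_1(\t_n),\ldots,\widetilde{\mathcal{X}}_{n-1}(\t_n))$ converges in total variation to the law of $(X_1,\ldots,X_{n-1})$, which is exactly the claimed estimate after taking the supremum over Borel sets $A\in\mathcal{B}(\R^{n-1})$.

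The one point requiring genuine care — and the main obstacle — is the bookkeeping in the first step: checking rigorously that ``Vervaat shift followed by shift-past-the-max'' composes into the single operation ``delete the first maximal coordinate of the unshifted sequence $(\oX_1,\ldots,\oX_n)$,'' with the correct handling of ties in the argmax (the definition of $T$ picks the \emph{first} maximal component among those that are also a running maximum, and $u_\star$, $U$ pick the \emph{first} vertex of maximal degree in lexicographical order, so these conventions must be matched). This is essentially the content of Proposition \ref{prop:links}, so I would phrase the argument as: by Proposition \ref{prop:links}(ii)--(iii) and the remark after Proposition \ref{prop:vervaat}, $(\widetilde{\mathcal{X}}_1(\t_n),\ldots,\widetilde{\mathcal{X}}_{n-1}(\t_n))$ has the law of the first $n-1$ coordinates of $T(\oX_1,\ldots,\oX_n)-(\gamma,\ldots,\gamma)$ under $\Pr{\,\cdot\,|\,\oW_n=\gamma n-1}$ — here one uses that cyclic exchangeability of $(\oX_1,\ldots,\oX_n)$ under this conditioning makes the deletion of the max commute with the Vervaat cyclic shift up to a further cyclic shift that is absorbed — and then apply Theorem \ref{thm:AL} as above. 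Everything after that reduction is routine.
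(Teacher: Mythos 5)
Your proposal is correct and follows essentially the same route as the paper's proof: identify $(\widetilde{\mathcal{X}}_{1}(\t_n),\ldots,\widetilde{\mathcal{X}}_{n-1}(\t_n))$ with the cyclic shift of $(X_1,\ldots,X_n)$ starting just after the maximal increment under $\Pr{\,\cdot\,|\,W_n=-1}$, use cyclic exchangeability together with the asymptotic uniqueness of the maximal component to reduce to the operator $T$, and conclude by Theorem \ref{thm:AL}. The only cosmetic difference is that you make explicit the matching of the lattice event $\{\oW_n=\gamma n-1\}$ with the interval conditioning of Theorem \ref{thm:AL}, which the paper leaves implicit.
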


\begin{proof}We keep the notation introduced in the proof of Proposition \ref{prop:invGW}. By Proposition \ref{prop:vervaat} and by the definition of $ \widetilde{ \W}( \t_{n})$, we have 
$$(\widetilde{ \mathcal{X}}_{i}( \t_{n}); 0 \leq i \leq n-1) \quad \mathop{=}^{(d)}  \quad (X_{V_{n}+1}, X_{V_{n}+2}, \ldots, X_{n},X_{1}, \ldots,X_{V_{n}-1}) \quad \textrm{ under } \Pr{ \, \cdot \, | W_{n}=-1}.$$  On the event that $( X_1, \ldots, X_n)$ has a unique maximal component, we have $$(X_{V_{n}+1}, X_{V_{n}+2}, \ldots, X_{n},X_{1}, \ldots,X_{V_{n}-1})  \quad \mathop{=}^{(d)} \quad (X_{1},X_{2}, \ldots, X_{V_{n}-1},X_{V_{n+1}},  \ldots X_{n}).$$
Indeed, since the distribution of $( X_1, \ldots, X_n)$ under $ \Pr { \, \cdot \, | \, W_n = -1}$ is cyclically exchangeable, $V_{n}$ is uniformly distributed on the latter event. 
But we have already seen that under $ \Pr { \, \cdot \, | \, W_n =  -1}$,  $( X_1, \ldots, X_n)$ has a unique maximal component with probability tending to one as $n \rightarrow \infty$. The conclusion follows from Theorem \ref{thm:AL}.
\end{proof}

The following corollary will be useful.
\begin{cor}\label{cor:indep} Fix $ \eta \in (0,1)$.
We have
$$ \sup_{A \in \mathcal{B}( \mathbb{T}_{f}^{ \fl{  \gamma \eta n}})} \left| \Pr{( \mathcal{T}_{1}( \t_{n}), \ldots, \mathcal{T}_{ \fl{  \gamma \eta n}}( \t_{n}) ) \in A} - \mathbb{P}_{\mu}^{ \otimes \fl{  \gamma \eta n}} (A) \right|  \quad\mathop{\longrightarrow}_{n \rightarrow \infty} \quad 0.$$
\end{cor}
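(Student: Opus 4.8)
The plan is to transfer the asymptotic independence of the modified increments $\widetilde{\mathcal X}_i(\t_n)$ obtained in Proposition~\ref{prop:tilde} down to the level of the grafted subtrees, using Proposition~\ref{prop:links}(ii), which identifies the forest $\mathcal F_{1,k}(\t_n)$ with the forest coded by $\widetilde{\W}(\t_n)$ run until it first reaches the level $-k$. Fix $\eta \in (0,1)$ and write $k_n = \fl{\gamma \eta n}$. To this end I would introduce a single measurable decoding map $\Phi_k$, acting on finite sequences $\mathbf x = (x_1, \dots, x_m)$ with $x_i \geq -1$ for all $i$, as follows: setting $s_0 = 0$ and $s_i = x_1 + \cdots + x_i$, if $\min_{0 \leq i \leq m} s_i \leq -k$ we let $\Phi_k(\mathbf x)$ be the forest of $k$ trees whose Lukasiewicz path is $(s_0, \dots, s_\sigma)$ with $\sigma = \inf\{i \geq 0 : s_i = -k\}$ (this is well defined since $s$ has jumps $\geq -1$), and otherwise we set $\Phi_k(\mathbf x) = \dagger$ for a cemetery point $\dagger$. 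I then apply $\Phi_{k_n}$ with $m = n-1$.

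For the tree side, observe that the increments of $\widetilde{\W}(\t_n)$ are $\geq -1$ and that $\widetilde{\W}_{n-1}(\t_n) = -\Delta(\t_n)$, so $\widetilde{\W}(\t_n)$ visits every integer of $\{0, -1, \dots, -\Delta(\t_n)\}$. Hence, for $n$ large enough (so that $1 \leq k_n \leq n-1$), the event $\{\min_{0 \leq i \leq n-1}\widetilde{\W}_i(\t_n) \leq -k_n\}$ coincides with $\{\Delta(\t_n) \geq k_n\}$, and on it one has $\sigma = \widetilde{\z}_{k_n}(\t_n)$; Proposition~\ref{prop:links}(ii) then shows that $\Phi_{k_n}(\widetilde{\mathcal X}_1(\t_n), \dots, \widetilde{\mathcal X}_{n-1}(\t_n))$ equals $\mathcal F_{1,k_n}(\t_n)$ on $\{\Delta(\t_n) \geq k_n\}$, and equals $\dagger$ on the complementary event. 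For the random walk side, by Proposition~\ref{prop:RW} and its extension to forests recalled just after it, the path $(W_0, \dots, W_{\sigma_{k_n}})$, where $\sigma_{k_n} = \inf\{i \geq 0 : W_i = -k_n\}$, is the Lukasiewicz path of $k_n$ independent $\GW_\mu$ trees; consequently, on $\{\sigma_{k_n} \leq n-1\}$ the variable $\Phi_{k_n}(X_1, \dots, X_{n-1})$ is precisely this forest, and therefore $\sup_{A \in \mathcal B(\T_f^{k_n})} | \Pr{\Phi_{k_n}(X_1, \dots, X_{n-1}) \in A} - \Pmu^{\otimes k_n}(A) | \leq \Pr{\sigma_{k_n} > n-1}$.

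Next I would check that this last error is negligible, by the law of large numbers: $\{\sigma_{k_n} > n-1\} \subseteq \{W_{n-1} > -k_n\}$, and since $W_{n-1}/(n-1) \to -\gamma$ almost surely (recall $\Es{W_1} = -\gamma$) while $-k_n/(n-1) \to -\gamma\eta > -\gamma$ because $\eta < 1$, one gets $\Pr{\sigma_{k_n} > n-1} \to 0$. Since total variation distance does not increase under the measurable map $\Phi_{k_n}$, Proposition~\ref{prop:tilde} gives that the law of $\Phi_{k_n}(\widetilde{\mathcal X}_1(\t_n), \dots, \widetilde{\mathcal X}_{n-1}(\t_n))$ and the law of $\Phi_{k_n}(X_1, \dots, X_{n-1})$ differ by $o(1)$ in total variation, and in particular (evaluating on the singleton $\{\dagger\}$) $\Pr{\Delta(\t_n) < k_n} \to 0$ — a fact which also follows directly from Theorem~\ref{thmintro:1}(i) since $k_n/(\gamma n) \to \eta < 1$. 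Splitting $\Pr{\mathcal F_{1,k_n}(\t_n) \in A}$ according to whether $\Delta(\t_n) \geq k_n$, replacing the main part by $\Pr{\Phi_{k_n}(\widetilde{\mathcal X}_1(\t_n), \dots, \widetilde{\mathcal X}_{n-1}(\t_n)) \in A}$ via the tree side, and then applying the total variation bound followed by the random walk side bound, one obtains $\sup_{A \in \mathcal B(\T_f^{k_n})} | \Pr{\mathcal F_{1,k_n}(\t_n) \in A} - \Pmu^{\otimes k_n}(A) | \leq \Pr{\Delta(\t_n) < k_n} + o(1) + \Pr{\sigma_{k_n} > n-1}$, and all three terms tend to $0$, which is the assertion.

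The only genuinely delicate point is the treatment of the two ``bad'' events $\{\Delta(\t_n) < k_n\}$ and $\{\sigma_{k_n} > n-1\}$, on which the clean correspondence between the forest and the coding sequence breaks down; both are negligible precisely because $\eta < 1$ — through Theorem~\ref{thmintro:1}(i) on the tree side and the negative drift of $W$ on the random walk side — and everything else is a routine change of variables through the Lukasiewicz coding combined with Propositions~\ref{prop:links} and~\ref{prop:tilde}.
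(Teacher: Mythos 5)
Your argument is correct and follows essentially the same route as the paper's proof: both decode the forest $\mathcal{F}_{1,\fl{\gamma\eta n}}(\t_n)$ as a measurable function of the modified increments via Proposition~\ref{prop:links}(ii), push the total variation estimate of Proposition~\ref{prop:tilde} through that map, and control the exceptional events using the law of large numbers for the negatively drifting walk (the paper phrases this as $\widetilde{\z}_{\fl{\gamma\eta n}}(\t_n)/n\to\eta$ in probability). The one small inaccuracy is that $\min_{0\leq i\leq n-1}\widetilde{\W}_i(\t_n)=-\Delta(\t_n)-\W_{U(\t_n)}(\t_n)$ can be strictly smaller than $-\Delta(\t_n)$, so the events $\{\min_i\widetilde{\W}_i(\t_n)\leq -k_n\}$ and $\{\Delta(\t_n)\geq k_n\}$ need not coincide and $\Phi_{k_n}$ need not return $\dagger$ when $\Delta(\t_n)<k_n$; this only adds one more term $\Pr{\Delta(\t_n)<k_n}$ to your final bound, which you have already shown tends to $0$ via Theorem~\ref{thmintro:1}(i).
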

This means that, as $n \rightarrow \infty$, the random variables  $\mathcal{T}_{1}( \t_{n}), \ldots, \mathcal{T}_{ \fl{  \gamma \eta n}}( \t_{n})$ are asymptotically independent $ \GW_{ \mu}$ trees.
\begin{proof} Since $\inf \{ i \geq 0, W_i = -\fl{  \gamma \eta n}\}/n$ converges in probability towards $\eta$ as $n \to \infty$, Proposition \ref{prop:tilde} entails that $\widetilde{\z}_{\fl{  \gamma \eta n}}( \t_{n})/n \rightarrow \eta$ in probability as $ n \rightarrow \infty$. This implies that for every $ \epsilon \in (0,1- \eta)$, with probability tending to $1$ as $ n \rightarrow \infty$, $\mathcal{T}_{1}( \t_{n}), \ldots, \mathcal{T}_{ \fl{  \gamma \eta n}}( \t_{n})$ only depends on $\widetilde{ \mathcal{X}}_{1}( \t_{n}), \ldots,\widetilde{ \mathcal{X}}_{ \fl{ \gamma (\eta+ \epsilon)n}}( \t_{n})$. The conclusion immediately follows from Proposition \ref{prop:tilde}.
\end{proof}

\subsection{Location of the vertex with maximal out-degree}
\label {section:location}

The main tool for studying the modified Lukasiewicz path $ \widetilde{ \W}$ is a time-reversal procedure which we now describe. For a sequence $(a_{i})_{i \geq 0}$ and for every integer $n \geq 1$, we let  $(a^{(n)}_{i})_{0 \leq i \leq n-1}$ be the sequence defined by $a^{(n)}_{i}=a_{n-1}-a_{n-1-i}$.

\begin {proof}[Proof of Theorem \ref {thm:cvd} (i)]  For every tree $\tau$, writing $\widetilde{ \W}$ instead of $\widetilde{ \W}(\tau)$ to simplify notation, note that by Proposition \ref{prop:links} (i) we have $$U( \tau)= \max \left\{ 0 \leq k \leq n-1; \quad\widetilde{ \W}^{(n)}_{k}=  \sup_{0 \leq j \leq n-1}  \widetilde{ \W}^{(n)}_{j} \right\}.$$
It follows from Proposition \ref{prop:links} (i) and Proposition \ref{prop:tilde} that, for every $i \geq 0$,
$$ \Pr{ U( \t_{n})=i}- \Pr{ i =  \max \{ 0 \leq k \leq  n-1; \quad { W}^{(n)}_{k} = \sup_{0 \leq j \leq n-1}  { W}^{(n)}_{j} \}} \quad\mathop{\longrightarrow}_{n \rightarrow \infty} \quad  0.$$
Since $(W^{(n)}_{i})_{0 \leq i \leq n-1}$ and $(W_{i})_{0 \leq i \leq n-1}$ have the same distribution, we have $$\max \{ 0 \leq k \leq  n-1; \quad { W}^{(n)}_{k} = \sup_{0 \leq j \leq n-1}  { W}^{(n)}_{j} \}  \quad \mathop{=} ^{(d)}  \quad  \max \{ 0 \leq k \leq  n-1; \quad { W}_{k} = \sup_{0 \leq j \leq n-1}  { W}_{j} \}.$$ In addition, $W$ has a negative drift and tends almost surely to $- \infty$, hence
$$ \Pr{ i= \max \{ 0 \leq k \leq  n-1; \quad { W}_{k} = \sup_{0 \leq j \leq n-1}  { W}_{j} \}}  \quad\mathop{\longrightarrow}_{n \rightarrow \infty} \quad  \Pr{ i= \max \{ k \geq 0; \quad { W}_{k} = \sup_{j \geq 0}  { W}_{j} \}} .$$
A simple argument based once again on time-reversal shows that the probability appearing in the right-hand side of the previous expression is equal to $$\Pr {\forall m \leq i , W_m \geq 0} \cdot \Pr { \forall j \geq 1, W_{j} \leq -1}.$$
Assertion (i) in Theorem \ref {thm:cvd} then follows from Proposition \ref{prop:useful}.
\end{proof}

\begin{rem}\label{rem:cl}One similarly shows that
\begin{equation}
\label{eq:cl}n-1-\widetilde{\z}_{\Delta(\t_{n})}(\t_n)  \quad\mathop{\longrightarrow}^{(d)}_{n \rightarrow \infty} \quad  \sup\{ i \geq 0; \quad W_{i}=0\}.
\end{equation}
Indeed, since $ \Delta( \t_{n})=-\widetilde{ \W}_{n-1}(\t_{n})$, we have $n-1-\widetilde{\z}_{\Delta(\t_{n})}(\t_n)=\max \left\{ 0 \leq i \leq n-1; \quad\widetilde{ \W}^{(n)}_{i}=  0 \right\}$, and \eqref{eq:cl} follows by the same arguments as those in the proof of Theorem \ref {thm:cvd} (i).
\end{rem}

To prove the other assertions of Theorem \ref {thm:cvd}, we will need the size-biased distribution associated with $ \mu$, which is the distribution of the random variable $  { \zeta}^*$ such that:
$$ \Pr {{ \zeta^*}=k} :=  \frac{k \mu_k}{ \mathfrak {m}} \qquad k=0,1, \ldots.$$
The following result concerning the local convergence of $ \t_n$ as $n \rightarrow \infty$ will be useful. We refer the reader to \cite[Section 6]{Jan12} for definitions and background concerning local convergence of trees (note that we need to consider trees that are not locally finite, so that this is slightly different from the usual setting).

Let $ \widehat{\mathcal {T}}$ be the infinite random tree constructed as follows. Start with a spine composed of a random number $S$ of vertices, where $S$ is defined by:
\begin{equation}
\label{eq:spine}\Pr {S=i}= (1-\mathfrak {m}) \mathfrak {m} ^ {i-1}, \qquad i=1,2, \ldots.
\end{equation} Then attach further branches as follows (see also Figure \ref {fig:GW} below). At the top of the spine, attach an infinite number of branches, each branch being a $ \GW_ \mu$ tree. At all the other vertices of the spine, a random number of branches distributed as $ \zeta^*-1$ is attached to  either to the left or to the right of the spine, each branch being a $ \GW_ \mu$ tree. At a vertex of the spine where $k$ new branches are attached, the number of new branches attached to the left of the spine is uniformly distributed on $  \{0, \ldots,k\}$. Moreover all random choices are independent.

\begin{thm}[Jonsson \& Stef\'ansson  \cite{JS11}, Janson \cite {Jan12}]\label {thm:local}
The trees  $ \t_n$ converges locally in distribution toward $ \widehat{\mathcal {T}}$ as $ n \rightarrow \infty$.
\begin{figure*}[h]
\begin{center}
\includegraphics[scale=0.8]{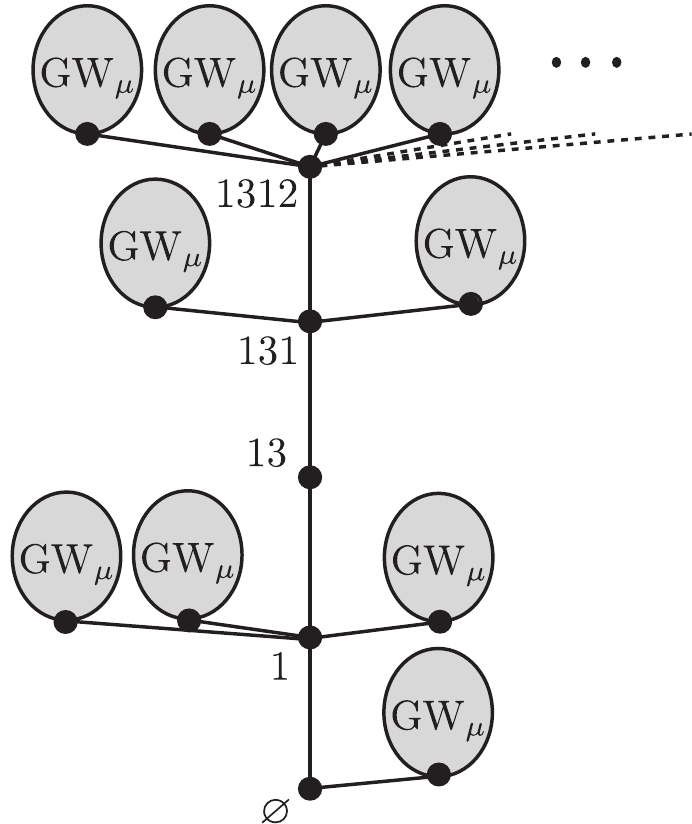}
\caption{\label{fig:GW}An illustration of $ \widehat{\mathcal {T}}$. Here, the spine is composed of the vertices $ \emptyset,1,13,13,131,1312$.}
\end{center}
\end{figure*}
\end {thm}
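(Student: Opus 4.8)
The plan is to read off the neighbourhood of the root of $\t_n$ from its Lukasiewicz path and to feed this into the random-walk estimates already available, namely Propositions \ref{prop:links}, \ref{prop:tilde}, \ref{prop:useful} and Corollary \ref{cor:indep}. Recall from \cite[Section 6]{Jan12} that, in the present setting where the limit has a vertex of infinite degree, $\t_n \to \widehat{\mathcal T}$ locally if and only if for every $r \geq 1$ the ball of radius $r$ around the root of $\t_n$ converges in distribution to the ball of radius $r$ around the root of $\widehat{\mathcal T}$ (a vertex of this ball with infinitely many children in $\widehat{\mathcal T}$ being recorded as such). Since the family of such balls, over all $r$, determines the tree, and since the ball of radius $r$ is the increasing limit of the finite objects obtained by keeping, for each vertex, only its first $M$ children, together with the information whether that vertex has more than $M$ children, it is enough to fix $r$ and $M$ and to establish the convergence in distribution of this truncated ball towards the corresponding finite object built from $\widehat{\mathcal T}$.

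The next step is to express the truncated ball as a deterministic functional of $\Delta(\t_n)$ and of two blocks of the modified Lukasiewicz path $\widetilde{\W}(\t_n)$. By Proposition \ref{prop:links}, on the event $\Delta(\t_n) \geq M$ --- of probability tending to one by Theorem \ref{thmintro:1}(i) --- the initial block $(\widetilde{\W}_0(\t_n), \ldots, \widetilde{\W}_{\widetilde{\z}_M(\t_n)}(\t_n))$ is the Lukasiewicz path of the forest $(\mathcal T_1(\t_n), \ldots, \mathcal T_M(\t_n))$ of the first $M$ subtrees grafted on $u_\star(\t_n)$, while the final block $(\widetilde{\W}_i(\t_n))_{\widetilde{\z}_{\Delta(\t_n)}(\t_n) \leq i \leq n-1}$ encodes, through the identities of Proposition \ref{prop:links}, the ancestral line of $u_\star(\t_n)$ together with all the subtrees grafted to its left and to its right; hence knowledge of $\Delta(\t_n)$ and of these two blocks determines the truncated ball. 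Both blocks have tight length: that of the final block is $n-1-\widetilde{\z}_{\Delta(\t_n)}(\t_n)$, which converges in distribution to $\sup\{i \geq 0 : W_i = 0\}$ by \eqref{eq:cl}, while $\widetilde{\z}_M(\t_n)$ converges in distribution to $\inf\{i \geq 0 : W_i = -M\}$; and the two blocks sit at the two ends of the increment sequence.

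The heart of the argument is to replace $\widetilde{\W}(\t_n)$ by the unconditioned walk $W$: Proposition \ref{prop:tilde} gives that the law of $(\widetilde{\mathcal X}_1(\t_n), \ldots, \widetilde{\mathcal X}_{n-1}(\t_n))$ is close in total variation to that of $(X_1, \ldots, X_{n-1})$, so all the quantities above have the same limit in distribution as the corresponding ones built from the i.i.d.\ sequence $(W_i)_{0 \leq i \leq n-1}$, with no conditioning left; and, the two relevant blocks being of tight length and located at the two ends of that sequence, they become asymptotically independent. On the one hand, the initial block then converges to the Lukasiewicz path of a forest of $M$ independent $\GW_\mu$ trees by Proposition \ref{prop:RW}, which, with $\Delta(\t_n) \to \infty$ and Corollary \ref{cor:indep}, produces a vertex of infinite degree carrying independent $\GW_\mu$ bushes, as at the top of the spine of $\widehat{\mathcal T}$. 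On the other hand, the final block, read backwards via the time-reversal $a \mapsto a^{(n)}$ introduced at the start of Section \ref{section:location}, becomes a functional of $W$ run up to its last visit to $0$, and a fluctuation-theoretic analysis of this path --- in which $\gamma = \Pr{\forall i \geq 1,\, W_i \leq -1}$ from Proposition \ref{prop:useful}(i) plays the role of the relevant escape probability --- should yield the spine, of length distributed as in \eqref{eq:spine}, carrying size-biased numbers of independent $\GW_\mu$ bushes (Proposition \ref{prop:RW}) placed uniformly to its left or to its right. Rather than recognising the limit abstractly, I would in practice compute $\lim_{n} \Pr{\cdot}$ and $\Pr{\widehat{\mathcal T} \in \cdot}$ directly on the events ``the truncated ball equals a given admissible pattern'', check that each factorises as a probability about the spine (expressed through Proposition \ref{prop:useful}) times a probability about $M$ independent $\GW_\mu$ trees, and verify that the two agree.

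I expect the main obstacle to be twofold. First, carrying out the reduction to a finite functional correctly in the non-locally-finite setting, so that the limiting vertex of infinite degree is properly handled and the truncated balls genuinely determine the local topology. Second, and more delicate, the combinatorial bookkeeping that matches the fluctuation-theoretic description of $W$ near its last visit to $0$ with the explicit construction of $\widehat{\mathcal T}$, in particular the appearance of the size-biased offspring distribution along the spine and the uniform left/right placement of the bushes; this is where Proposition \ref{prop:useful} does the real work. By contrast, once Propositions \ref{prop:tilde} and \ref{prop:useful} and the estimate \eqref{eq:cl} are available, the random-walk inputs themselves are routine.
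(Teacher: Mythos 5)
First, a point of comparison that matters: the paper does not prove Theorem \ref{thm:local} at all. It is quoted from Jonsson--Stef\'ansson \cite{JS11} and Janson \cite{Jan12}, whose arguments (explicit computations with the partition functions of simply generated trees, respectively Janson's general analysis of the balls-in-boxes correspondence) have nothing in common with the random-walk machinery of this paper. So your proposal is not competing with an internal proof; it is an attempt to rederive the result from the paper's own toolbox. As a plan it is coherent and free of circularity: Propositions \ref{prop:links}, \ref{prop:tilde} and \ref{prop:useful}, Remark \ref{rem:cl}, Theorem \ref{thmintro:1}(i) and Corollary \ref{cor:indep} are all established without appeal to Theorem \ref{thm:local}, and your decomposition of the truncated root-ball into the two ends of the cyclically shifted increment sequence, followed by the total-variation replacement by i.i.d.\ increments and the exact independence of two blocks of tight length sitting at opposite ends of an i.i.d.\ sequence, is precisely the mechanism the paper itself uses to prove Theorem \ref{thm:cvd}(i) and Theorem \ref{thm:cvfinidim} (compare Lemma \ref{lem:fd}). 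Your identification of which parts of $\widetilde{\W}(\t_n)$ encode which parts of the tree (first $M$ subtrees of $u_\star$ at the start; right bushes, then ancestors and left bushes, at the end) is also correct.

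The one place where the sketch stops short of a proof is exactly where the content of the theorem lies: the identification of the limit of the reversed final block with the spine of $\widehat{\mathcal T}$. That the spine length is geometric of parameter $\gamma$ does follow from Proposition \ref{prop:useful} together with the ladder-epoch time-reversal of Lemma \ref{lem:fd}; but the statement that each non-top spine vertex carries $\zeta^*-1$ extra $\GW_\mu$ bushes, placed uniformly to the left or right, is introduced with a ``should yield''. This requires an actual computation: at each new weak maximum of the reversed walk one must show that the straddling jump has the size-biased law $k\mu_k/\mathfrak m$, that the position of the maximum inside that jump is uniform among the $k$ possibilities (this is what produces the uniform left/right split), and that, conditionally on the ladder structure, the excursions between ladder epochs decode into independent $\GW_\mu$ bushes via Proposition \ref{prop:RW}. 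This is classical fluctuation theory --- it is essentially Durrett's \cite[Theorem 3.2]{D80}, which the paper cites for the analogous unconditioned statement --- so the plan should go through, but as written the decisive step is named rather than carried out. If you complete that computation, for instance by the direct pattern-matching you describe in your penultimate paragraph, you obtain a self-contained alternative proof of the local convergence that fits naturally alongside the paper's other arguments.
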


\begin {proof}[Proof of Theorem \ref {thm:cvd} (ii) and (iii)]By Skorokhod's representation
theorem (see e.g. \cite[Theorem 6.7]{Bil99}) we can suppose that the convergence $\t_n \rightarrow \widehat{\mathcal {T}}$ as $n \rightarrow  \infty$  holds almost surely for the local topology. Let $u_ \star \in \widehat{\mathcal {T}}$ be the vertex of the spine with largest generation. By \eqref{eq:spine}, we have for $i \geq 0$:
\begin{equation}
\label{eq:hspine} \Pr {| u_ \star| =i}=(1-\mathfrak {m}) \mathfrak {m} ^ {i}.
\end{equation}
Recall the notation $U( \t_n)$ for the index of $u_ \star( \t_n)$. Let $ \epsilon>0$. By assertion (i) of Theorem \ref {thm:cvd}, which was proved at the beginning of this section, we can fix an integer $K$ such that, for every $n$, $ \Pr {U ( \t_n) \leq K} >1- \epsilon$. From the local convergence of $ \t_n$ to $\widehat{\mathcal {T}}$ (and the properties of local convergence, see in particular Lemma 6.3 in \cite {Jan12}) we can easily verify that
$$ 	\Pr {  \{ u_ \star ( \t_n) \neq u_ \star\} \cap  \{ U( \t_n) \leq K\}}  \quad\mathop{\longrightarrow}_{n \rightarrow \infty} \quad 0.$$
We conclude that $ \Pr {u_ \star ( \t_n) \neq u_ \star} \rightarrow 0$ as $ n \rightarrow \infty$. Assertion (ii) of Theorem \ref{thm:cvd} now follows from \eqref{eq:hspine}.\end {proof}

Note that assertion (i) in Theorem \ref {thm:cvd} was needed to prove assertion (ii). Indeed, the local convergence of $ \t_n$ toward $\widehat{\mathcal {T}}$ would not have been sufficient to get that $ \Pr {u_ \star ( \t_n) \neq u_ \star} \rightarrow 0$.

\subsection{Subtrees branching off the vertex with maximum out-degree}
\label{sec:branching}

Before proving   Theorem \ref {thm:fluctenfants}, we gather a few useful ingredients.  It is well known that the mean number of vertices of a $ \GW_ \mu$ tree at generation $n$ is $ \mathfrak {m}^n$. As a consequence, we have $ \Esmu { |\tau|}=1+\mathfrak {m}+\mathfrak {m}^2+\cdots=1/(1-\mathfrak {m})= 1/ \gamma$. Moreover, for $ n \geq 1$, by Kemperman's formula (see e.g. \cite[Section 5] {Pit06}):
\begin{equation}
\label{eq:zn}\Prmu { |\tau|=n}= \frac{1}{n} \Pr {W_n=-1}=\frac{1}{n} \Pr {\oW_n= \gamma n-1} \quad \mathop { \sim}_ {n \rightarrow \infty} \quad \frac{ \mathcal {L}(n)}{ ( \gamma n) ^ {1+ \theta}},
\end{equation}
where we have used \eqref{eq:estimee} for the last estimate. It follows that the total progeny of a $ \GW_ \mu$ tree belongs to the domain of attraction of a spectrally positive strictly stable law of index $2 \wedge \theta$. Hence we can find a slowly varying function $L'$ such that the law of $ (\z ( \bf)-n/ \gamma)/ \left( L'(n) n^ { 1/(2 \wedge \theta)} \right)$ under $ \mathbb {P}_ { \mu,n}$ converges as $n \rightarrow \infty$ to the law of $Y_1$, where we recall that $ \Pmuj$ is the law of a forest of $j$ independent $ \GW_ \mu$ trees. We set $B'_n= L'(n) n^ { 1/(2 \wedge \theta)}$.

 Let $( \cZ_i)_ {i \geq 0}$ be the random walk which starts at $0$ and whose jump distribution has the same law as the total progeny of a $ \GW_ \mu$ tree. Note that $\Pr{ \cZ_j=k}= \Prmuj { \z( \bf)=k}$. Hence the distribution of $ \cZ_1$  belongs to the domain of attraction of a spectrally positive strictly stable law of index $2 \wedge \theta$. In particular, the following convergence holds in distribution in the space $ \D([0,1], \R)$: \begin{equation}
\label{eq:cvZ} \left(\frac{ \cZ_ { \fl {nt}} - nt/ \gamma}{B'_n}, 0 \leq t \leq 1 \right) \quad \mathop { \longrightarrow}^ {(d)}_ {n \rightarrow \infty} \quad (Y_t, 0 \leq t \leq 1).
\end{equation}

Finally, the following technical result establishes a useful link between $B_n$ and $B'_n$.

\begin {lem}\label {lem:link} We have $ {B'_n}/{B_n} \rightarrow 1/ \gamma^ {1+1/(2 \wedge \theta)}$ as $ n \rightarrow \infty$.
\end {lem}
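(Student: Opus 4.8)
The plan is to relate the two slowly varying functions $L$ and $L'$ through the tail estimates of the relevant stable domains of attraction. Recall that $B_n = L(n)n^{1/(2\wedge\theta)}$ is characterized by $\oW_n/B_n \to Y_1$ in distribution, where $\oW_n = \oX_1 + \cdots + \oX_n$ with $\oX_i$ centered; and $B'_n = L'(n)n^{1/(2\wedge\theta)}$ is characterized by $(\cZ_n - n/\gamma)/B'_n \to Y_1$, where $\cZ_n$ is the random walk whose step distribution is the total progeny of a $\GW_\mu$ tree. Both limits are the \emph{same} random variable $Y_1$, so the normalizing sequences are, loosely speaking, determined by the tail behavior of the respective step distributions, and comparing those tails will give the constant.

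First I would treat the case $\theta \in (1,2)$, where the limit law is genuinely $(2\wedge\theta)$-stable $=\theta$-stable with a heavy right tail. For a centered step distribution in the domain of attraction of a spectrally positive $\theta$-stable law, the normalizing constant $a_n$ is governed by $n\,\Pr{\oX_1 > a_n} \to c_\theta$ for an explicit constant, equivalently $n\,\Pr{\oW_1 \in (x,x+1]}$-type estimates; by Assumption $(H_\theta)$ we have $\Pr{\oW_1 \in (x,x+1]} \sim \mathcal{L}(x)/x^{1+\theta}$, hence $\Pr{\oX_1 > x} \sim \mathcal{L}(x)/(\theta x^{\theta})$ up to the slowly varying asymptotics. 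On the other side, by \eqref{eq:zn} the total progeny satisfies $\Prmu{|\tau| = n} \sim \mathcal{L}(n)/(\gamma n)^{1+\theta}$, so the step distribution of $\cZ$ has tail $\Pr{\cZ_1 > n} \sim \mathcal{L}(n)/(\theta \gamma^{1+\theta} n^\theta)$. Thus the step distribution of $\cZ$ (once centered by $1/\gamma$) has a tail that is a factor $\gamma^{-(1+\theta)}$ larger than that of $\oX_1$. Since the normalizing constant scales like the inverse of the tail function evaluated appropriately, i.e. $B'_n/B_n \to (\gamma^{-(1+\theta)})^{1/\theta} = \gamma^{-(1+\theta)/\theta} = \gamma^{-(1 + 1/\theta)}$, which is exactly $\gamma^{-(1+1/(2\wedge\theta))}$ in this range. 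I would make this rigorous via the standard characterization (e.g. \cite[Theorem 1.3.1]{BGT89} together with the classical relation between $a_n$ and the truncated moment / tail of the step distribution) rather than computing moments directly.

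Next I would handle the case $\theta \geq 2$, where $2\wedge\theta = 2$ and $Y$ is a Brownian motion, so the relevant quantity is the variance (or, if the variance is infinite, the truncated second moment, a slowly varying function). Here $B_n = L(n)\sqrt{n}$ with $L(n)^2 \sim \mathrm{Var}(\oX_1)$ (or the truncated version), and $B'_n = L'(n)\sqrt{n}$ with $L'(n)^2$ the analogous quantity for the centered total progeny $\cZ_1 - 1/\gamma$. The key identity is a variance computation: if $N$ denotes the total progeny of a $\GW_\mu$ tree then $N = 1 + \sum_{i=1}^{k_\emptyset} N_i$ with $N_i$ i.i.d.\ copies of $N$ independent of $k_\emptyset$, which yields by the standard branching-process recursion $\mathrm{Var}(N) = \sigma^2/\gamma^3$ when $\sigma^2 = \mathrm{Var}(\mu) < \infty$; since $\mathrm{Var}(\oX_1) = \sigma^2$, we get $L'(n)/L(n) \to 1/\gamma^{3/2} = \gamma^{-(1+1/2)}$, consistent with the claim. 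When $\sigma^2 = \infty$ the same recursion applied to truncated second moments, combined with the fact that $\Pr{N > n}$ and $\Pr{\oX_1 > n}$ differ by the constant factor $\gamma^{-(1+\theta)}$ computed above (now with $\theta \geq 2$), gives the matching slowly varying asymptotics and the same limiting constant.

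The main obstacle I anticipate is the bookkeeping in the case $\theta \geq 2$ with infinite variance: there one cannot simply use a finite-variance branching recursion, and must instead transfer the tail asymptotic $\Pr{N>n} \sim c\,\mathcal{L}(n)n^{-\theta}$ into a statement about the truncated second-moment slowly varying function $\int_0^x y\,\Pr{|\cZ_1 - 1/\gamma| > y}\,dy$ and compare it, via \cite[Theorem 1.3.1]{BGT89} or Karamata's theorem, to the corresponding object for $\oX_1$; the constant $\gamma^{-(1+\theta)}$ in the tails does \emph{not} immediately become the right power of $\gamma$ in $B'_n/B_n$ and one must be careful that the exponent $1/(2\wedge\theta)$ interacts correctly. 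A cleaner route, which I would adopt to sidestep this, is to argue directly at the level of the functional limit: combining \eqref{eq:cvZ} with the convergence defining $B_n$ and a deterministic lemma identifying normalizing sequences of two sequences converging to the \emph{same} non-degenerate limit, one reduces the whole lemma to computing a single scalar — the asymptotic ratio of tails (resp.\ truncated second moments) — which is then a one-line consequence of \eqref{eq:zn} and Assumption $(H_\theta)$, valid uniformly across all $\theta > 1$.
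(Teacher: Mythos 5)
Your proposal is correct and follows essentially the same route as the paper: for the infinite-variance regime, the paper also deduces the constant from the asymptotic ratio $\Prmu{\zt\geq x}/\mu([x,\infty))\to 1/\gamma^{1+\theta}$ (phrased via the generalized inverses defining $B_n$ and $B'_n$ rather than directly via $n\,\Pr{\oX_1>a_n}\to c_\theta$, but this is the same regular-variation argument), and for the finite-variance regime it likewise reduces the lemma to the identity $\mathrm{Var}(\zt)=\sigma^2/\gamma^3$. The only cosmetic difference is that the paper obtains this variance identity by expanding $\Es{(\sum_i\mathfrak Z_i)^2}$ using the martingale $\mathfrak Z_i/\mathfrak m^i$, whereas you use the one-step branching recursion (law of total variance), which gives the same result.
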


\noindent The proof of Lemma \ref {lem:link} is postponed to the end of this section.

We are now ready to prove Theorem \ref {thm:fluctenfants}.

\begin {proof}[Proof of Theorem \ref {thm:fluctenfants}] We shall show that for every fixed $ \eta \in (0,1)$:

\begin{equation}
\label{eq:amontrer} \left( \frac{Z_ { \fl {  \Delta( \t_n) t}}( \t_n)-  \Delta( \t_n) t/ \gamma}{B_ { n}}, 0 \leq t \leq \eta\right) \quad \mathop { \longrightarrow}^ {(d)}_ { n \rightarrow \infty} \quad
\d( \frac{1}{ \gamma} Y_t, 0 \leq t \leq \eta).
\end{equation}
Since $ \Delta( \t_{n})/( \gamma n) \rightarrow 1$ in probability as $n \rightarrow \infty$ (Theorem \ref{thmintro:1} (i)), recalling Lemma \ref {lem:link}, the desired result will follow from a time-reversal argument since the vectors $( \xi_1( \t_n), \xi_2( \t_n), \ldots,\xi_ { \Delta( \t_n)}( \t_n))$ and $( \xi_ { \Delta( \t_n)}( \t_n), \xi_ { \Delta( \t_n)-1}( \t_n), \ldots, \xi_1( \t_n))$ have the same distribution. Tightness follows from the time-reversal argument and also continuity at $t=1$.

Since $ \Delta( \t_{n})/( \gamma n) \rightarrow 1$ in probability as $n \rightarrow \infty$,  by \cite[Lemma 5.7]{Kor12}, it is sufficient to establish that
\begin{equation}
\label{eq:amontrer2} \left( \frac{Z_ { \fl { \gamma n t}}( \t_n)-  n t}{B_ {  n}}, 0 \leq t \leq \eta\right) \quad \mathop { \longrightarrow}^ {(d)}_ { n \rightarrow \infty} \quad
\d(\frac{1}{ \gamma}  Y_t, 0 \leq t \leq \eta).
\end{equation}
To this end, note that $B'_{\fl{\gamma n}}/B_n\to 1/\gamma$ as $n \to \infty$ and that $(Z_{1}( \t_{n}), Z_{2}( \t_{n}), \ldots, Z_{  \fl { \gamma n  \eta}}( \t_{n}))$ are asymptotically independent by Corollary \ref{cor:indep}. The conclusion immediately follows from \eqref{eq:cvZ} applied with $\fl{\gamma n}$ instead of $n$.
\end{proof}

We conclude this section by proving Lemma \ref {lem:link}.

\begin {proof}[Proof of Lemma \ref {lem:link}]
Let $ \sigma^2$ be the variance of $ \mu$. Note that $ \sigma^2 = \infty$ if $ \theta \in (1,2)$, $ \sigma^2 < \infty$ if $ \theta >2$ and that we can have either $ \sigma^2 = \infty$ or $ \sigma^2 < \infty$ for $ \theta=2$. When $ \sigma^2= \infty$, the desired result follows from classical results expressing $B_n$ in terms of $\mu$. Indeed, in the case $ \theta<2$, we may choose $B_n$ and $B'_n$ such that (see e.g. \cite[Theorem 1.10]{Kor12}):
$$ \frac{B'_n}{B_n } = \frac{ \inf \left\{ x \geq 0; \,  \Prmu { \zt \geq x} \leq
\frac{1}{n}\right\} }{\inf \left\{ x \geq 0; \,  \mu([x, \infty)) \leq
\frac{1}{n}\right\}}.$$
Property (ii) in Assumption $(H_ \theta)$ and \eqref{eq:zn} entail that $ { \Prmu { \zt \geq x}}/{\mu([x, \infty))} \rightarrow  1/\gamma ^ {1+ \theta}$ as $x \rightarrow \infty$. The result easily follows. The case when $ \sigma^2= \infty$ and $ \theta=2$ is treated by using similar arguments. We leave details to the reader.

We now concentrate on the case $ \sigma^2< \infty$. Note that necessarily $ \theta \geq 2$. Let $ \sigma'^2$ be the variance of $\zt$ under $ \Pmu$ (from \eqref{eq:zn} this variance is finite when $ \sigma^2 < \infty$). We shall show that $ \sigma'= \sigma/ \gamma^ {3/2}$. The desired result will then follow  since we may take $B_n= \sigma \sqrt {n/2}$ and $ B'_n= \sigma' \sqrt {n/2}$ by the classical central limit theorem. In order to calculate $ \sigma'^2$, we introduce the Galton--Watson process $( \mathfrak{Z}_i)_ {i \geq 0}$ with offspring distribution $ \mu$ such that $ \mathfrak {Z}_0=1$. Recall that $ \Es{\mathfrak{Z}_i}= \mathfrak {m}^i$. Then note that:$$\sigma'^2= \Esmu { \zt^2}- \Esmu { \zt}^2= \Es { \left( \sum_ {i=0}^ \infty \mathfrak {Z}_i \right)^2}- \frac{1}{ \gamma^2}.$$
Since $( \mathfrak{Z}_i/ \mathfrak{m}^	i)_ {i \geq 0}$ is a martingale with respect to the filtration generated by $( \mathfrak{Z}_i)_ {i \geq 0}$, we have $ \Es {Z_i Z_j}= \mathfrak {m}^ {j-i} \Es {Z_i^2}$. Also using the well-known fact that for $i \geq 1$ the variance of $\mathfrak{Z}_i$ is $ \sigma^2 \mathfrak {m}^ {i-1}(\mathfrak {m}^i-1)/(\mathfrak {m}-1)$ (see e.g. \cite[Section 1.2]{AN72}), write:
\begin{eqnarray*}
\Es { \left( \sum_ {i=0}^ \infty \mathfrak {Z}_i \right)^2} &=& \sum_  {i=0}^ \infty \Es {\mathfrak {Z}_i ^2}+ 2 \sum_ {0 \leq i<j}  \mathfrak {m}^ {j-i}\Es {\mathfrak {Z}_i ^2}=\sum_  {i=0}^ \infty \Es {\mathfrak {Z}_i ^2} \left( 1+ \frac{2 \mathfrak {m}}{1-\mathfrak {m}} \right) \\
&=& \left(1+\sum_  {i=1}^ \infty  \left(\frac{\sigma^2 \mathfrak {m}^ {i-1}(\mathfrak {m}^i-1)}{\mathfrak {m}-1}+ \mathfrak {m}^ {2i}\right)\right) \left( 1+ \frac{2 \mathfrak {m}}{1-\mathfrak {m}} \right) \\
&=& \frac{ \sigma^2}{ \gamma^3}+\frac{ 1}{ \gamma^2}.
\end{eqnarray*}
This entails $ \sigma'= \sigma/ \gamma^ {3/2}$ and the conclusion follows.
\end {proof}

\begin {proof}[Proof of Corollary \ref{cor:intro}] Recall that  $ \overline { \Delta}(Z)= \sup_{0<s<1}(Z_{s}-Z_{s-})$ denotes the largest jump of $Z \in \D([0,1], \R)$.  It follows from the continuity of $Z \rightarrow \overline{ \Delta}(Z)$ and Theorem \ref {thm:fluctenfants} that:
$$  \frac{ \d1}{ B_n}\max_ {1 \leq i \leq \Delta( \t_n)} \xi_i( \t_n)  \quad\mathop{\longrightarrow}^ {(d)}_{n \rightarrow \infty} \quad  \frac{1}{\gamma }  \sup_ {s \in (0,1]} ( Y_s-Y_{s-}).$$
If $ \theta \geq 2$, $Y$ is continuous and the first assumption of  Corollary \ref{cor:intro} follows. If $ \theta<2$, the result easily follows from the fact that the Lévy measure of $Y$ is $ \nu(dx)= \mathbbm {1}_ {  \{x >0 \}}dx/( \Gamma(- \theta)x^ {1+ \theta})$.
\end {proof}

\subsection {Height of large conditioned non-generic trees}

We now prove Theorem \ref {thmintro:height}. If $ \mathbf {f} = ( \tau_1, \ldots, \tau_k)$ is a forest, its height $\H( \mathbf { \bf})$ is by definition $ \max( \H( \tau_1), \ldots, \H( \tau_k))$. Recall that for $1 \leq i \leq \Delta( \tau)$, let $\mathcal{T}_{i}( \tau)$ is the tree of descendants of the $i$-th child of $ u_{ \star}( \tau)$ and that $\mathcal{F}_{i,j}(\tau)=(\mathcal{T}_{i}( \tau), \ldots, \mathcal{T}_{j}( \tau))$.

\begin {proof}[Proof of Theorem \ref{thmintro:height}]If $ \tau$ is a tree, let ${ \H}_ \star ( \tau)= 1+\H \left( \mathcal{F}_{1, \Delta(\tau)}(\tau) \right)$ be the height of the subtree of descendants of $u_ \star (\tau)$ in $ \tau$. By Theorem \ref {thm:cvd} (ii), the generation of ${u}_ \star ( \t_n)$ converges in distribution. It is thus sufficient to establish that, if  $( \lambda_n)_ { n \geq 1}$ of positive real numbers tending to infinity:
\begin{equation}
\label{eq:montrerheight}\Pr { \left|{ \H}_ \star( \t_n) -\frac{ \ln (n)}{ \ln(1/\mathfrak {m})} \right| \leq  \lambda_n}  \quad\mathop{\longrightarrow}_{n \rightarrow \infty} \quad1.
\end{equation}
To simplify notation, set
$ \H^{(n)}_ {i,j}=   \H(\mathcal {F}_ {i,j}( \t_{n}))$   and  $p_n=  \ln(n)/ \ln(1/\mathfrak {m})- \lambda_n$. Let us first prove the lower bound, that is $\Pr{{ \H}_ \star( \t_n) \leq p_n}  \rightarrow 0$ as $n \rightarrow \infty$. It is plain that $\Pr{{ \H}_ \star( \t_n) \leq p_n} \leq  \Pr{ \H^{(n)}_{1, \fl{ \gamma n/2}} \leq p_{n}}$. In addition,  by Corollary \ref{cor:indep}, 
$$ \Pr{ \H^{(n)}_{1, \fl{ \gamma n/2}} \leq p_{n}} - \mathbb{P}_{\mu,\fl{ \gamma n/2}}\left( \H(\mathbf{f}) \leq p_{n} \right)  \quad\mathop{\longrightarrow}_{n \rightarrow \infty} \quad 0.$$
But $$\mathbb{P}_{\mu,\fl{ \gamma n/2}}\left( \H(\mathbf{f}) \leq p_{n} \right) = (1-\Prmu { \H ( \tau) > p_n})^ {\fl{\gamma n/2}}.$$
Since $\mu$ satisfies Assumption $(H_ \theta)$, we have $ \sum_ {i \geq 1} {i \ln (i) \mu_i} < \infty$ . It follows from \cite[Theorem 2]{HSV67} that there exists a constant $c>0$ such that:
\begin{equation}
\label{eq:esH} \Prmu { \H( \tau) > k}  \quad\mathop{ \sim}_{k \rightarrow \infty} \quad  c \cdot \mathfrak {m}^ {k}.
\end{equation} 
Hence $\Prmu { \H ( \tau) > p_n}) \sim  c \cdot {1}/( n \cdot \mathfrak{m}^ { \lambda_{n}})$ as $n \rightarrow \infty$. Consequently $\mathbb{P}_{\mu,\fl{ \gamma n/2}}\left( \H(\mathfrak{f}) \leq p_{n} \right)$ tends to $0$ as $n \rightarrow \infty$,  and the proof of the lower bound is complete.

Now set $q_n=  \ln(n)/ \ln(1/\mathfrak {m})+ \lambda_n$. The proof of the fact that $\Pr{{ \H}_ \star ( \t_n) \geq q_n}  \rightarrow 0$ as $ n \rightarrow \infty$ is similar and we only sketch the argument. Write:
$$\Pr{{ \H}_ \star ( \t_n) \geq q_n} \leq  \Prmu{ \H^{(n)}_ {1, \fl{ \Delta(\t_{n}) /2}} \geq q_n}+  \Prmu { \H^{(n)}_ { \fl{\Delta(\t_{n}) /2}+1, \Delta(\t_{n})} \geq q_n}$$
Since  $\H^{(n)}_ { \Delta(\t_{n})+1,\Delta(\t_{n})}$ has the same distribution as $\H^{(n)}_ { 1, \Delta(\t_{n})- \fl {\Delta(\t_{n})/2}}$, it suffices to show that the first term of the last sum tends to $0$ as $n \rightarrow \infty$. By Theorem \ref{thmintro:1} (i), we have $ \Delta( \t_{n})/2 \leq  \fl{2 \gamma n/3}$ with probability tending to $1$ as $n \rightarrow \infty$. It is thus sufficient to establish that  $\Prmu{ \H^{(n)}_ {1, \fl{2 \gamma n /3}} \geq q_n} \rightarrow 0$ as $n \rightarrow \infty$. By arguments similar to those of the proof of the lower bound, it is enough to check that
$$\mathbb {P}_ { \mu, \fl {2\gamma n/3}}  \left( \H( \bf) \geq q_n\right)  \quad\mathop{\longrightarrow}_{n \rightarrow \infty} \quad 0.$$
This follows from \eqref{eq:esH}, the fact that $\mathbb {P}_ { \mu, \fl {2\gamma n/3}}  \left( \H( \bf) \geq q_n\right)  =1-(1-\Prmu { \H( \tau) \geq  q_n})^ { \fl{ 2 \gamma n/3}}$ combined with the asymptotic behavior $\Prmu { \H( \tau) \geq  q_n }\sim c \cdot \mathfrak{m}^{ \lambda_{n}}/n$ as $ n \rightarrow \infty$. This completes the proof of the upper bound and establishes \eqref{eq:montrerheight}.\end{proof}

Theorem \ref {thmintro:height} implies that $ \H( \t_n)/  \ln(n) \to \ln(1/\mathfrak {m})$ in probability as $n \to \infty$. We next show that this convergences holds in $ \L^p$ for every $p \geq 1$.

\begin{prop}\label{prop:lp}For every $p \geq 1$, we have
$$\Es {\H( \t_n)^p} \quad\mathop{ \sim}_{n \rightarrow \infty} \quad \frac{ \ln(n)^p}{ \ln(1/\mathfrak {m})^p}.$$
\end{prop}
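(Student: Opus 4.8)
The plan is to upgrade the convergence in probability $\H(\t_n)/\ln(n) \to 1/\ln(1/\mathfrak{m})$, which is an immediate consequence of Theorem \ref{thmintro:height}, to convergence of the $p$-th moments. Since $\H(\t_n)^p/\ln(n)^p \to 1/\ln(1/\mathfrak{m})^p$ in probability, by the Vitali convergence theorem it suffices to prove that the family $(\H(\t_n)^p/\ln(n)^p)_{n}$ is uniformly integrable, and for that it is enough to show that it is bounded in $\L^q$ for some $q>1$; in fact I would simply produce a uniform bound of the form $\Es{\H(\t_n)^{p+1}} = O(\ln(n)^{p+1})$, which clearly implies uniform integrability of the $p$-th powers after normalization.

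To bound $\Es{\H(\t_n)^{p+1}}$ I would first reduce, exactly as in the proof of Theorem \ref{thmintro:height}, to bounding the moments of $\H_\star(\t_n) = 1 + \H(\mathcal{F}_{1,\Delta(\t_n)}(\t_n))$: indeed $\H(\t_n) \leq |u_\star(\t_n)| + \H_\star(\t_n)$, where $|u_\star(\t_n)|$ has uniformly bounded moments of all orders by Theorem \ref{thm:cvd}(ii) together with a tail bound (the local convergence picture, or a direct argument, gives geometric-type tails uniform in $n$), so it remains to control $\Es{\H_\star(\t_n)^{p+1}}$. Writing the tail sum $\Es{\H_\star(\t_n)^{p+1}} = \sum_{k \geq 0} ((k+1)^{p+1}-k^{p+1}) \Pr{\H_\star(\t_n) > k}$, I would split the sum at $k \approx C\ln(n)$: the contribution of $k \leq C\ln(n)$ is trivially $O(\ln(n)^{p+1})$, and for $k > C\ln(n)$ I need a good upper bound on $\Pr{\H_\star(\t_n) > k}$ that decays fast enough to be summable against the polynomial weight.

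For the tail bound on $\Pr{\H_\star(\t_n) > k}$ I would proceed as in the upper bound part of the proof of Theorem \ref{thmintro:height}: bound $\Pr{\H_\star(\t_n) > k}$ by a term involving the height of a forest of roughly $\gamma n$ independent $\GW_\mu$ trees (using Theorem \ref{thmintro:1}(i) to control $\Delta(\t_n) \leq \fl{2\gamma n}$ with high probability, plus a crude bound on the complementary event), so that for $k$ large, $\Pr{\H_\star(\t_n) > k} \leq \Pr{\Delta(\t_n) > \fl{2\gamma n}} + 1 - (1-\Prmu{\H(\tau) \geq k})^{\fl{2\gamma n}} \leq o(1)$-type term $+\; 2\gamma n \cdot \Prmu{\H(\tau) \geq k}$. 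By \eqref{eq:esH}, $\Prmu{\H(\tau) \geq k} \leq C' \mathfrak{m}^k$, so for $k \geq (1+\e)\ln(n)/\ln(1/\mathfrak{m})$ this is $\leq C'' n^{-\e} + (\text{prob. bound on } \Delta)$. Here I would need to be a little careful: the crude term $\Pr{\Delta(\t_n) > \fl{2\gamma n}}$ must itself be summable, so rather than the in-probability statement of Theorem \ref{thmintro:1}(i) I would use the exact identity \eqref{eq:eg} together with $\Pr{W_n = -1} = n\,\Prmu{\zt=n}$ and a union bound over cyclic shifts to get $\Pr{\Delta(\t_n) > \fl{2\gamma n}} = O(n\,\mu([\gamma n, \infty)))$ or similar, which by Assumption $(H_\theta)$ decays polynomially and harms nothing once multiplied by the sub-polynomial losses; alternatively one truncates this term at $k$ of order $\ln n$ and absorbs it into the $O(\ln(n)^{p+1})$ part.

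The upshot is that $\sum_{k > C\ln(n)}((k+1)^{p+1}-k^{p+1})\Pr{\H_\star(\t_n)>k}$ is $o(1)$ (or at worst $O(1)$) provided $C > 1/\ln(1/\mathfrak{m})$ and $\e$ is chosen small, giving $\Es{\H_\star(\t_n)^{p+1}} = O(\ln(n)^{p+1})$ and hence $\Es{\H(\t_n)^{p+1}} = O(\ln(n)^{p+1})$. Uniform integrability of $(\H(\t_n)/\ln n)^p$ follows, and combining with the in-probability convergence yields $\Es{\H(\t_n)^p} \sim \ln(n)^p/\ln(1/\mathfrak{m})^p$. I expect the main obstacle to be bookkeeping rather than conceptual: one has to make sure every ``high-probability'' statement used in the proof of Theorem \ref{thmintro:height} is replaced by a quantitative tail estimate strong enough to survive multiplication by the polynomial factor $(k+1)^{p+1}-k^{p+1}$ and summation, in particular the control of the event $\{\Delta(\t_n)$ large$\}$ and of the spine length $|u_\star(\t_n)|$, both of which need explicit (polynomial, resp. geometric) tail bounds uniform in $n$ rather than mere convergence in distribution.
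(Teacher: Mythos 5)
Your overall strategy (convergence in probability plus uniform integrability via a uniform bound on a higher moment) is legitimate and is a different route from the paper's, but the mechanism you propose for the quantitative tail bound has a genuine gap. Every estimate you plan to import from the proof of Theorem \ref{thmintro:height} ultimately rests on Corollary \ref{cor:indep}, hence on Theorem \ref{thm:AL} of Armend\'ariz--Loulakis, which is a purely asymptotic total-variation statement with \emph{no rate}: it only gives $\Pr{\H_\star(\t_n)>k}\leq \varepsilon_n + 1-(1-\Prmu{\H(\tau)>k})^{N}$ with an error $\varepsilon_n=o(1)$ that does not depend on $k$. Summing this against the weights $(k+1)^{p+1}-k^{p+1}$ over $C\ln n<k\leq n$ produces a term of order $\varepsilon_n\, n^{p+1}$, which is not $O(\ln(n)^{p+1})$. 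The same problem affects the event $\{\Delta(\t_n)>\fl{2\gamma n}\}$: its probability decays only polynomially in $n$, so once multiplied by the crude bound $\H_\star(\t_n)^{p+1}\leq n^{p+1}$ on the complementary event it overwhelms $\ln(n)^{p+1}$ as soon as $p$ is large. This is not bookkeeping: the asymptotic-independence machinery is structurally incapable of delivering tail estimates that survive multiplication by $n^{p+1}$.

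The paper sidesteps all of this with a one-line unconditioning bound: $\Pr{\H(\t_n)>K\ln n}=\Prmu{\H(\tau)>K\ln n \mid \zt=n}\leq \Prmu{\H(\tau)>K\ln n}/\Prmu{\zt=n}$. By \eqref{eq:zn} the denominator is at least $n^{-2-\theta}$ for large $n$, and by \eqref{eq:esH} the numerator is at most $2c\, n^{-K\ln(1/\mathfrak m)}$; bounding $\H(\t_n)$ crudely by $n$ then gives $\Es{\H(\t_n)^p\mathbbm{1}_{\{\H(\t_n)>K\ln n\}}}\leq 2c\, n^{p+2+\theta-K\ln(1/\mathfrak m)}\to 0$ once $K>(p+2+\theta)/\ln(1/\mathfrak m)$, and combined with the convergence in probability this concludes (no spine decomposition, no control of $\Delta(\t_n)$, no dyadic splitting). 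If you insert this same inequality in place of your forest comparison in the regime $k>C\ln n$, your tail-sum computation does close, since the geometric factor $\mathfrak m^{k}$ beats both the polynomial weight in $k$ and the polynomial factor $n^{2+\theta}$ for $C$ large; so the repair is local but essential.
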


\begin{proof}The following proof is due to an anonymous referee. It is sufficient to show there exists $K>0$ such that
$$ \Es {\H( \t_n)^p \mathbbm{1}_{  \{\H( \t_n) > K \ln(n)\}}}  \quad\mathop{\longrightarrow}_{n \rightarrow \infty} \quad 0 .$$ 
By \eqref{eq:zn}, we have $ \Pr{ \zt=n} \geq n^{-2- \theta}$ for $n$ sufficiently large, and in addition by \eqref{eq:esH} we  have $\Pr {\H( \tau) > K \ln(n)} \leq 2c \cdot n^{- K \ln(1/\mathfrak{m})}$ for $n$ sufficiently large. Hence, bounding the height of $ \H( \t_{n})$ by $n$, we get
\begin{eqnarray*}
 \Es {\H( \t_n)^p \mathbbm{1}_{  \{\H( \t_n) > K \ln(n)\}}}   & \leq & n^p \cdot \Pr{\H( \t_n) > K \ln(n)} \\
 &\leq&  n^p  \cdot { \Pr {\H( \tau) > K \ln(n)}} /{ \Pr{ \zt=n}} \\
 & \leq &  2c \cdot n^{p+2+ \theta-K \ln(1/ \mathfrak{m})}.
\end{eqnarray*}
It thus suffices to chose $K>(p+2+ \theta)/ \ln(1/ \mathfrak{m})$. This completes the proof.
\end{proof}

\subsection{Scaling limits of non-generic trees}

We turn to the proof of Theorem \ref {thm:GH}. 

\begin {proof}[Proof of Theorem \ref {thm:GH}.] Fix $ \eta \in (0,1/ \ln(1/ \mathfrak{m}))$. We shall show that, with probability tending to one as $n \rightarrow \infty$, at least $ \ln(n)$ trees among the $ \fl {  \gamma n /2}$ trees   $\mathcal{T}_{1}( \t_{n}), \ldots \mathcal{T}_{\fl{ \gamma n /2}}( \t_{n})$ have height at least $ \eta \ln(n)$. This will indeed show that, with probability tending to one as $n \rightarrow \infty$, the number of balls of radius less than $ \eta$ needed to cover $ \ln(n)^{-1} \cdot \t_{n}$ tends to infinity. By standard properties of the Gromov--Hausdorff topology (see \cite[Proposition 7.4.12]{BBI01}) this implies that the sequence of random metric spaces $(\ln(n)^{-1} \cdot \t_{n})_{n \geq 1}$ is not tight.

If $ \textbf{f}=(\tau_{1}, \ldots, \tau_{j})$ is a forest, let $ \mathsf{E}_n( \textbf{f})$ be the event defined by
$$  \mathsf{E}_n( \textbf{f})=  \{ \textrm{at most } \ln(n) \textrm{ trees among } \tau_{1}, \ldots, \tau_{j} \textrm{ have height at least } \eta \ln(n)\}.$$
It is thus sufficient to prove that $\Pr{ \mathsf{E}_n(\mathcal{T}_{1}( \t_{n}), \ldots \mathcal{T}_{\fl{ \gamma n /2}}( \t_{n}))}$ converges toward $0$ as $n \rightarrow \infty$. As previously, by Corollary \ref{cor:indep}, it is sufficient to establish that
\begin{equation}
\label{eq:amqlim}\P_{ \mu, \fl{ \gamma n/2}} \left( \mathsf{E}_n( \textbf{f}) \right)  \quad\mathop{\longrightarrow}_{n \rightarrow \infty} \quad 0.
\end{equation}
Now denote by $N_{n}$ the number of trees among a forest of $ \fl{ \gamma n/2}$ independent $ \GW_{ \mu}$ trees of height at least $ \eta \ln(n)$. Using \eqref{eq:esH} and setting $ \eta '= \eta \ln (1/m)$, we get that for a certain constant $C>0$, $N_{n}$ dominates a binomial random variable $ \textsf{Bin}( \fl{ \gamma n/2}, C n^{ \eta'})$, which easily implies that $ \Pr{N_{n} \leq  \ln(n)} \rightarrow 0$ as $ n \rightarrow \infty$. This shows \eqref{eq:amqlim} and completes the proof.
\end {proof}

Note that  Theorem \ref {thm:GH} implies that there is no nontrivial scaling limit for the contour function coding $ \t_{n}$, since convergence of scaled contour functions imply convergence in the Gromov--Hausdorff topology (see e.g. \cite[Lemma 2.3]{LG05}).

\subsection{Finite dimensional marginals of the height function}

We first extend the definition of the height function  to a forest. If $ \textbf{f}= ( \tau_{i})_{1 \leq i \leq  j}$ is a forest, set $n_0=0$ and $n_{p}= | \tau_{1}|+|\tau_{2}|+ \cdots +|\tau_{p}|$ for $1 \leq p  \leq j$. Then, for every $0 \leq i \leq p-1$ and $0 \leq k \leq |\tau_{i+1}|$, set
$$ \textbf{H}_{n_{i}+ k}( \textbf{f})=H_{k}( \tau_{i+1}).$$
Note that the excursions of $ \textbf{H}( \textbf{f})$ above $0$ are the $( \textbf{H}_{n_{i}+k}( \textbf{f}); 0 \leq k \leq  |\tau_{i+1}| )$. The Lukasiewicz path $ \W( \textbf{f})$ and height function $ \textbf{H}( \textbf{f})$ satisfy the following relation  (see e.g. \cite[Proposition 1.7]{LG05} for a proof): For every $0 \leq n \leq | \textbf{f}|$,

\begin{equation}
\label{eq:relHW}\mathbf {H}_{n}( \textbf{f})= \textrm{Card}(  \{k \in  \{0,1, \ldots,n-1\} ; \, \W_{k}( \textbf{f})= \inf_{k \leq j \leq n} \W_{j}( \textbf{f})\}).
\end{equation}



Recall that $(W_{n})_{n \geq 0}$ stands for  the random walk introduced in Proposition \ref {prop:RW} with $ \rho= \mu$ and that $ X_k = W_k - W_ {k-1}$ for $k \geq 1$. For every $n \geq 0$, set
$$ {H}_{n}= \textrm{Card}(  \{0 \leq k \leq n-1 ; \, W_{k}= \inf_{k \leq j \leq n} W_{j} \}), \quad J_{n}= n-\min  \{ 0 \leq i \leq n; \quad { W}_{i} =  \min_{0 \leq j \leq n} W_{j} \}. $$
Finally, for $k \geq 0$, set $$M_{k}= \textrm{Card}(  \{ 1 \leq  i \leq  k ; \, W_{i} = \max_{0 \leq j \leq i } W_{j} \}), \qquad T= \sup \{i \geq 0, W_{i}= \sup_{j \geq 0} W_{j} \}.$$ Since $W$ drifts almost surely to $- \infty$, $T$ is almost surely finite, and $M_{T}$ is distributed according to a geometric random variable of parameter $ \Pr { \forall i \geq 1, W_{i} \leq -1}= \gamma$, by Proposition \ref{prop:useful} (i).

The following result, which is an unconditioned version of Theorem \ref{thm:cvfinidim}, will be useful.  

\begin {lem} \label{lem:fd}For every $0 < s  <1$, the following convergence holds in distribution:
\begin{equation}
\label{eq:cvvers} \left(H_{ \fl{ns}},H_{n},J_n\right)  \quad\mathop{\longrightarrow}^ {(d)}_{n \rightarrow \infty} \quad ( \mathbf{e_{1}}, M_T,T),
\end{equation}
where  $\mathbf{e_{1}}$ is geometric random variable of parameter $ \gamma$, independent of $(X_{n})_{n \geq 1}$.
\end {lem}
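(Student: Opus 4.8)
The plan is to read the triple $(H_{\fl{ns}}, H_n, J_n)$ off a \emph{time-reversal} of the walk and then apply the strong Markov property. Fix $n$ and set $\widehat W_i = W_n - W_{n-i}$ for $0 \leq i \leq n$; since $X_1, \ldots, X_n$ are i.i.d., $(\widehat W_0, \ldots, \widehat W_n)$ has the same law as $(W_0, \ldots, W_n)$. Substituting $j \mapsto n-j$ in each infimum defining $H_m$, $H_n$, $J_n$, one checks that, with $m = \fl{ns}$,
\begin{align*}
H_m &= \Card\bigl\{\, n-m+1 \leq i \leq n ;\ \widehat W_i = \textstyle\max_{n-m \leq l \leq i} \widehat W_l \,\bigr\}, \\
H_n &= \Card\bigl\{\, 1 \leq i \leq n ;\ \widehat W_i = \textstyle\max_{0 \leq l \leq i} \widehat W_l \,\bigr\}, \\
J_n &= \max\bigl\{\, 0 \leq i \leq n ;\ \widehat W_i = \textstyle\max_{0 \leq l \leq n} \widehat W_l \,\bigr\}.
\end{align*}
Hence, writing $a_n = n - \fl{ns}$ and setting, for a walk $W$ as above, $A_n = \Card\{a_n+1 \leq i \leq n;\ W_i = \max_{a_n \leq l \leq i} W_l\}$ and $T_n = \max\{0 \leq i \leq n;\ W_i = \max_{0\leq l\leq n}W_l\}$, we get $(H_{\fl{ns}}, H_n, J_n) \stackrel{(d)}{=} (A_n, M_n, T_n)$, with $M_n$ as in the statement. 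It therefore suffices to prove that $(A_n, M_n, T_n)$ converges in distribution to $(\mathbf{e_1}, M_T, T)$ with $\mathbf{e_1}$ geometric of parameter $\gamma$ and independent of $(M_T, T)$.

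I would then handle the two pieces separately. Since $W$ drifts to $-\infty$, the index $T = \sup\{i \geq 0;\ W_i = \sup_{j\geq 0}W_j\}$ is a.s.\ finite and, for every $n \geq T$, no index $i > T$ is a running maximum (as $W_i < W_T$ there), so $M_n = M_T$ and $T_n = T$; thus $(M_n, T_n) \to (M_T, T)$ almost surely. For $A_n$, note that $a_n \to \infty$ and $n - a_n = \fl{ns} \to \infty$ since $0 < s < 1$. Applying the strong Markov property at time $a_n$, the walk $W'_l := W_{a_n+l} - W_{a_n}$ has the law of $W$, is independent of the $\sigma$-field $\mathcal{G}_n$ generated by $X_1, \ldots, X_{a_n}$, and a shift of index shows $A_n = M_{\fl{ns}}(W')$, the number of weak ascending ladder epochs of $W'$ up to time $\fl{ns}$. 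As $\fl{ns} \to \infty$ this increases to the total number of weak ascending ladder epochs of $W'$, which (again because $W' \to -\infty$) equals $M_{T'}$ for $T' = \sup\{i;\ W'_i = \sup_j W'_j\}$; as recalled just before the statement, $M_{T'}$ is geometric of parameter $\gamma$. Hence $A_n$ converges in distribution to such a variable $\mathbf{e_1}$.

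To get the joint convergence with the announced independence, introduce $G_n = \{\max_{a_n < i \leq n} W_i < \max_{0 \leq l \leq a_n} W_l\}$. On $G_n$ the running maximum of $W$ over $[0,n]$ coincides with that over $[0,a_n]$ and is last attained before time $a_n$, so $(M_n, T_n) = (M_{a_n}, T_{a_n})$ there; and $(M_{a_n}, T_{a_n})$ is $\mathcal{G}_n$-measurable, hence independent of $W'$, hence of $A_n$. Moreover $G_n \supseteq \{a_n > T\}$, so $\Pr{G_n} \to 1$. Thus, for bounded $f : \Z_+ \to \R$ and $g : \Z_+^2 \to \R$,
$$ \Es{f(A_n)\,g(M_n, T_n)} = \Es{f(A_n)\,g(M_{a_n}, T_{a_n})} + o(1) = \Es{f(A_n)}\,\Es{g(M_{a_n}, T_{a_n})} + o(1), $$
using $\Pr{G_n} \to 1$ and boundedness for the first equality and the above independence for the second. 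Letting $n \to \infty$, with $\Es{f(A_n)} \to \Es{f(\mathbf{e_1})}$ and $(M_{a_n}, T_{a_n}) \to (M_T, T)$ a.s.\ (same argument as for $(M_n, T_n)$), we obtain $\Es{f(A_n)\,g(M_n, T_n)} \to \Es{f(\mathbf{e_1})}\,\Es{g(M_T, T)}$, which is exactly \eqref{eq:cvvers}.

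The only genuinely delicate step is the last one: $M_n$ and $T_n$ depend on the entire trajectory, whereas the independence we need comes from freezing the past at time $a_n$, and the naive ``good event'' $\{T \leq a_n\}$ fails to be $\mathcal{G}_n$-measurable. Replacing $(M_n, T_n)$, on the high-probability event $G_n$, by the $\mathcal{G}_n$-measurable pair $(M_{a_n}, T_{a_n})$ is what makes the factorization go through; everything else is the elementary time-reversal identity together with the already-noted geometric law of the number of weak ascending ladder epochs (a consequence of Proposition~\ref{prop:useful}(i)).
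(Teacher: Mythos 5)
Your proof is correct and follows essentially the same route as the paper's: the identical time-reversal identity turns $(H_{\fl{ns}},H_n,J_n)$ into record statistics $(M^n_{\fl{ns}},M_n,T_n)$ of the walk, and the joint convergence is obtained by replacing $(M_n,T_n)$ on a high-probability event by a copy measurable with respect to an initial segment independent of the reversed block. The only cosmetic difference is that you truncate at the growing time $a_n=n-\fl{ns}$ via the event $G_n$, whereas the paper truncates at a fixed level $N_0$ chosen so that $\Pr{T>N_0}<\epsilon$; both devices work for the same reason.
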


\begin{proof}
 Set  $W^{n}_{i}=W_{n-\fl{ns}+i}-W_{n-\fl{ns}}$ for $i \geq 0$, and $M^{n}_{k}= \textrm{Card}(  \{ 1 \leq  i \leq  k ; \, W^{n}_{i} = \max_{0 \leq j \leq i } W^{n}_{j} \})$. Set also $$T_{n}= \max \{i \in \{0,1, \ldots,n\} , W_{i}= \sup_{ 0 \leq j \leq n} W_{j} \}.$$ Notice that $(W^{n}_{i}, i \geq 0)$ has the same distribution as $(W_{i}, i \geq 0)$. Using the fact that $(W_{i}, 0 \leq i \leq n)$ and $ (W_{n}-W_{n-i}, 0 \leq i \leq  n)$ have the same distribution, we get that
 $$ \left(H_{ \fl{ns}},H_{n},J_n\right) \quad\mathop{=}^{(d)} \quad   \left( M^{n}_{ \fl{ns}}, M_{n},T_n \right).$$
 Let $F_{1}: \Z \rightarrow \R_{+}$, $F_{2}: \Z^2 \rightarrow \R_{+}$ be bounded functions and fix $\epsilon>0$. 
Choose $N_{0}>0$ such that  $ \Pr {T>N_{0}}< \epsilon$.  For $n \geq N_{0}$ , note that  $M_{n}=M_{ N_{0}}$  and $T_n=T_{N_0}$ on the event $T \leq N_{0}$. Hence for $n \geq N_0$:
 
 $$ \left|\Es{F_{1}(M^{n}_{ \fl {ns}}) F_{2}(M_{n},T_n)} - \Es { F_{1}(M^{n}_{ \fl {ns}}) F_{2}(M_{N_{0}},T_{N_{0}}) }\right| \leq C \epsilon$$
where $C>0$ is a constant depending only on $F_{1},F_{2}$ (and which may change from line to line). Next, using the fact that $M^{n}_{ \fl{ns}}$ is independent of $(M_{N_0},T_{N_0})$ for $n> N_0/(1-s)$, we get that for  $n> N_0/(1-s)$,
  $$ \left|\Es{F_{1}(M^{n}_{ \fl {ns}}) F_{2}(M_{n},T_n)} - \Es { F_{1}(M^{n}_{ \fl {ns}})} \Es{ F_{2}(M_{N_{0}},T_{N_{0}}) }\right| \leq C \epsilon.$$
 The conclusion immediately follows since $M^{n}_{ \fl {ns}}$ has the same distribution as $M_{ \fl {ns}}$ and since $(M_n,T_n)$ converges in distribution toward $(M_T,T)$ as $n \to \infty$.
\end{proof}

\begin {rem}\label{rem:gen2}It is straightforward to adapt the proof of Lemma \ref{lem:fd} to get that for every $0 < t_{1} < t_{2} < \cdots <t_{k}<1$ and $b>0$,  the following convergence holds in distribution:
$$ \left(H_{ \fl{n t_{1}-b}},H_{\fl{n t_{2}-b}}, \ldots, H_{\fl{n t_{k}-b}}, H_{n-1},J_{n-1} \right)  \quad\mathop{\longrightarrow}^ {(d)}_{n \rightarrow \infty} \quad ( \mathbf{e_{1}}, \mathbf{e_{2}}, \ldots, \mathbf{e_{k}},M_T,T)$$
where $ (\mathbf{e_{i}})_{1 \leq i \leq k}$ are i.i.d.~geometric random variables of parameter $ \gamma$, independent of $(M_T,T)$.
\end{rem}

Recall that for $1 \leq i \leq j \leq  \Delta( \tau)$,  $\mathcal{T}_{i}( \tau)$ is the tree of descendants of the $i$-th child of $ u_{ \star}( \tau)$, that $\mathcal{F}_{i,j}(\tau)=(\mathcal{T}_{i}( \tau), \ldots, \mathcal{T}_{j}( \tau))$ and that  $ \widetilde{\z}_{k}(\tau)= \inf  \{i \geq 0; \widetilde{ \mathcal{W}}_{i}(\tau)=-k\}$ for $1 \leq k \leq  \Delta(\tau)$. We are now ready to prove Theorem \ref{thm:cvfinidim}. 

\begin {proof}[Proof of  Theorem \ref{thm:cvfinidim}]

To simplify, we establish Theorem \ref{thm:cvfinidim} for $k=2$, the general case being similar. To this end, we fix $0<s<t<1$ and shall show that
\begin{equation}
\label{eq:mqdim}(H_{ \fl{n s}}(\t_n), H_{\fl{n t}}(\t_n))\quad\mathop{\longrightarrow}^{(d)}_{n \rightarrow \infty} \quad ( 1+\mathbf{e_{0}}+\mathbf{e_{1}}, 1+\mathbf{e_{0}}+\mathbf{e_{2}}).
\end{equation}
We first express $H_{ \fl{n s}}(\t_n)$ in terms of the modified Lukasiewicz path $\widetilde{\W}$ which was defined in Section \ref{sec:descr}. To this end we need to introduce some notation. For every tree $ \tau$ and $0 \leq p \leq | \tau|-1$, set $$ \widetilde{H}_{p}( \tau)= \textrm{Card}(  \{k \in  \{0,1, \ldots,p-1\} ; \, \widetilde{W}_{k}( \tau)= \inf_{k \leq j \leq p} \widetilde{W}_{j}( \tau)\}).$$
Note that  by Proposition \ref {prop:links} (ii), $(\widetilde{H}_{1}( \t_n), \ldots, \widetilde{H}_{\widetilde{\z}_{\Delta(\t_n)}( \t_n)}( \t_n))$ is the height function of the forest $\mathcal{F}_{ 1, \Delta( \t_n)}( \t_n)$. For every $n \geq 1$ and  $r \in (0,1)$ such that $U(  \t_{n})<\fl{nr}<\widetilde{\z}_{\Delta(\t_{n})}( \t_{n})$, we have $H_{ \fl{n r}}(\t_n)=1+H_{ \fl{n r}-U(  \t_{n})-1}(\mathcal{F}_{ 1, \Delta( \t_{n})}( \t_{n}))+|u_{ \star}( \tau)|$. Hence, using Proposition \ref {prop:links} (ii) and \eqref{eq:relHW}:
$$H_{ \fl{n r}}(\t_n)=1+\widetilde{H}_{\fl{n r}-U( \t_{n})-1}( \t_{n})+ \widetilde{H}_{n-1}( \t_n).$$

Since $U( \t_n)$ and $\widetilde{\z}_{\Delta(\t_{n})}( \t_{n})$ converge in distribution (by respectively Theorem \ref{thm:cvd} and Remark \ref{rem:cl}), we have  $U(  \t_{n})<\fl{ns}<\fl{nt}<\widetilde{\z}_{\Delta(\t_{n})}( \t_{n})$ with probability tending to $1$ as $n \to \infty$. By combining Proposition \ref{prop:links} (i) and Proposition \ref{prop:tilde}, we get that:
$$ \sup_{A \in \mathcal{B}( \R^2)} \left| \Pr{(H_{ \fl{n s}}(\t_n), H_{\fl{n t}}(\t_n)) \in A}- \Pr{(1+H_{ \fl{ns-J_{n-1}-1}} + H_{n-1},1+H_{ \fl{nt-J_{n-1}-1}} + H_{n-1}) \in A } \right|$$
converges to $0$ as $n \rightarrow \infty$.
But by Remark \ref{rem:gen2}, we have $$(1+H_{ \fl{ns-J_{n-1}-1}} + H_{n-1},1+H_{ \fl{nt-J_{n-1}-1}} + H_{n-1})   \quad\mathop{\longrightarrow}^ {(d)}_{n \rightarrow \infty} \quad ( 1+\mathbf{e_{1}}+M_T, 1+\mathbf{e_{2}}+M_T) $$
with $M_T$ independent of $\mathbf{e_{1}},\mathbf{e_{2}}$. Since $M_{T}$ is distributed according to a geometric random variable of parameter $ \gamma=1-\mathfrak {m}$, the conclusion immediately follows.
\end {proof}

\section{Extensions and comments}

We conclude by proposing possible extensions and stating a few open questions.

\medskip

\textbf {Other types of conditioning.} Throughout this text, we have only considered the case of Galton--Watson trees conditioned on having a fixed total progeny. It is natural to consider different types of conditioning.  For instance, for $ n \geq 1$, let $ \t ^ h_n$ be a random tree distributed according to $ \Prmu { \, \cdot \, | \H( \tau ) \geq n}$. In \cite[Section 22]{Jan12}, Janson has in particular proved that when $ \mu$ is critical or subcritical, as $n \rightarrow \infty$, $ \t^h_n$ converges locally to Kesten's Galton--Watson tree conditioned to survice $ \mathcal {T}^*$, which a random infinite tree  different from $ \widehat{\mathcal {T}}$. It would be interesting to know whether the theorems of the present work apply in this case.

Another type of conditioning involving the number of leaves has been introduced in \cite {CKdissections,Kor12,Riz11}. If $ \tau$ is a tree, denote by $ \lambda( \tau)$ the number of leaves of $ \tau$ (that is the number of individuals with no child). For $ n \geq 1$ such that $ \Prmu { \lt=n}>0$, let $ \t^l_n$ be a random tree distributed according to $ \Prmu { \, \cdot \, | \l( \tau) = n}$. Do results similar to those we have obtained hold when $ \t_n$ is replaced by $\t^l_n$? We expect the answer to be positive, since a $ \GW_ \mu$ tree with $n$ leaves is very close to a $ \GW_ \mu$ with total progeny $n/ \mu_0$ (see \cite {Kor12} for details), and we believe that the techniques of the present work can be adapted to solve this problem.

\medskip

\textbf {Concentration of $\H( \t_n)$ around ${ \ln (n)}/{ \ln(1/\mathfrak {m})} $.} By Theorem \ref {thmintro:height}, the sequence of random variable $ (\H( \t_n) -{ \ln (n)}/{ \ln(1/\mathfrak {m})})_ {n \geq 1} $ is tight. It is therefore natural to ask the following question, due to Nicolas Broutin. Does there exist a random variable $ \mathscr{H}$ such that:
$$\H( \t_n) - \frac{ \ln (n)}{ \ln(1/\mathfrak {m})}  \quad\mathop{\longrightarrow}^ {(d)}_{n \rightarrow \infty} \quad \mathscr {H} \quad?$$
We expect the answer to be negative. Let us give a heuristic argument to support this prediction. In the proof of Theorem \ref {thmintro:height}, we have seen that the height of  $\H( \t_n)$ is close to the height of $ \fl {\gamma n}$ independent $ \GW_ \mu$ trees and the height of each of these trees satisfies the estimate \eqref{eq:esH}. However, if $(Q_i)_ {i \geq 1}$ is an i.i.d. sequence of random variables such that $ \Pr {Q_1 \geq k}= c  \cdot \mathfrak {m}^k$, then it is known (see e.g. \cite[Example 4.3]{Jan06}) that the random variables $$\max(Q_1,Q_2, \ldots,Q_n)- \frac{ \ln (n)}{ \ln(1/\mathfrak {m})}$$
do not converge in distribution.

\medskip

\textbf {Other types of trees}. Janson \cite {Jan12} gives a very general limit theorem concerning the local asymptotic behavior of simply generated trees conditioned on having a fixed large number of vertices. Let us briefly recall the definition of simply generated trees. Fix a sequence $ \textbf {w}= ( w_k)_ {k \geq 0}$ of nonnegative real numbers such that $ w_0>0$ and such that there exists $k>1$ with $w_k>0$ ($\textbf {w}$ is called a weight sequence). Let $ \mathbb {T}_f \subset \T$ be the set of all finite plane trees and, for every $n \geq 1$, let $ \mathbb {T}_n$ be the set of all plane trees with $n$ vertices. For every $ \tau \in \mathbb {T}_f $, define the weight $ w( \tau)$ of $ \tau$ by:
$$w ( \tau)= \d \prod_ {u \in \tau} w_ {k_u( \tau)}.$$
Then for $n \geq 1$ set
$$Z_n = \sum_ { \tau \in \mathbb {T}_n}  w ( \tau).$$
For every $ n \geq 1$ such that $Z_n \neq 0$, let $ \mathcal {T}_n$ be a random tree taking values in $ \T_n$ such that for every $ \tau \in \T_n$:
$$ \Pr {\mathcal {T}_n= \tau} = \frac{ w ( \tau)}{Z_n}.$$
The random tree $ \mathcal{T}_n$ is said to be finitely generated. Galton--Watson trees conditioned on their total progeny are particular instances of simply generated trees. Conversely, if $ \mathcal {T}_n$ is as above, there exists an offspring distribution $ \mu$ such that $\mathcal {T}_n$ has the same distribution as a $ \GW_ \mu$ tree conditioned on having $n$ vertices if, and only if, the radius of convergence of $ \sum w_i z^i$ is positive (see \cite[Section 8]{Jan12}).

It would thus be interesting to find out if the theorems obtained in the present work for Galton--Watson trees can be extended to the setting of simply generated trees whose associated radius of convergence is $0$. In the latter case, Janson \cite {Jan12} proved that $ \mathcal {T}_n$ converges locally as $n \rightarrow \infty$ toward a deterministic tree consisting of a root vertex with an infinite number of leaves attached to it. We thus expect that the asymptotic properties derived in the present work will take a different form in this case. We hope to investigate this in future work.


 \begin {tabular}{l }
Laboratoire de mathématiques,  UMR 8628 CNRS, Université Paris-Sud \\
91405 ORSAY Cedex, France
\end {tabular}

\medbreak
\noindent \texttt{igor.kortchemski@normalesup.org}
\end{document}